\newcommand*{\LyxTextAccent}[3][0ex]{%
  \hmode@bgroup\ooalign{\null#3\crcr\hidewidth
  \raise#1\hbox{#2}\hidewidth}\egroup}
\newcommand{\LyxAccentSize}[1][\sf@size]{%
  \check@mathfonts\fontsize#1\z@\math@fontsfalse\selectfont
}
\ProvideTextCommandDefault{\textcommabelow}[1]{
  \LyxTextAccent[-.31ex]{\LyxAccentSize,}{#1}}
\newenvironment{lyxlist}[1]
	{\begin{list}{}
		{\settowidth{\labelwidth}{#1}
		 \setlength{\leftmargin}{\labelwidth}
		 \addtolength{\leftmargin}{\labelsep}
		 }}
	{\end{list}}
\begin{document}
\title{\onehalfspacing{}Non-linear, solvable, residually $p$ groups}
\author{Donsung Lee}
\date{September 23, 2023}

\maketitle
\medskip{}

\begin{abstract}
\begin{spacing}{0.9}
\noindent In 2005, Borisov and Sapir proved that ascending HNN extensions
of finitely generated linear groups are residually finite. Subsequently,
Dru\textcommabelow{t}u and Sapir noted the existence of finitely generated
non-linear residually finite groups based on the work of Borisov and
Sapir. In 2017, Kharlampovich, Myasnikov and Sapir showed that there
exist finitely generated non-linear solvable residually finite groups.
In this paper, we construct the first examples of finitely generated
non-linear solvable residually 2 groups.\vspace{1cm}

\noindent \textbf{Keywords:} HNN extension, Residually $p$ groups,
Baumslag\textendash Solitar groups\medskip{}

\noindent \textbf{Mathematics Subject Classification 2020:} 20E26,
20F05, 20F16 
\end{spacing}
\end{abstract}
\noindent $ $\theoremstyle{definition}
\newtheorem{definition}{Definition}[section]
\theoremstyle{remark}
\newtheorem{theorem}{Theorem}[section]
\newtheorem{lemma}[theorem]{Lemma}
\newtheorem{corollary}[theorem]{Corollary}
\newtheorem{remark}[theorem]{Remark}

\newtheorem*{ack}{Acknowledgements}
\newtheorem*{lemma21}{Lemma \textnormal{2.1}}
\newtheorem*{claim1}{Claim \textnormal{1}}
\newtheorem*{proofclaim1}{Proof of Claim \textnormal{1}}
\newtheorem*{claim2}{Claim \textnormal{2}}
\newtheorem*{proofclaim2}{Proof of Claim \textnormal{2}}
\newtheorem*{claim3}{Claim \textnormal{3}}
\newtheorem*{proofclaim3}{Proof of Claim \textnormal{3}}
\newtheorem*{claim4}{Claim \textnormal{4}}
\newtheorem*{proofclaim4}{Proof of Claim \textnormal{4}}
\newtheorem*{claim5}{Claim \textnormal{5}}
\newtheorem*{proofclaim5}{Proof of Claim \textnormal{5}}
\newtheorem*{claim6}{Claim \textnormal{6}}
\newtheorem*{proofclaim6}{Proof of Claim \textnormal{6}}
\newtheorem*{claim7}{Claim \textnormal{7}}
\newtheorem*{proofclaim7}{Proof of Claim \textnormal{7}}
\newtheorem*{claim8}{Claim \textnormal{8}}
\newtheorem*{proofclaim8}{Proof of Claim \textnormal{8}}
\newtheorem*{proofthm12}{Proof of Theorem \textnormal{1.2}}
\newtheorem*{proofthm13}{Proof of Theorem \textnormal{1.3}}
\newtheorem*{proofthm15}{Proof of Theorem \textnormal{1.5}}
\newtheorem*{prooflemma21}{Proof of Lemma \textnormal{2.1}}

\section{Introduction}

For a group property $X$, a group $G$ is \emph{residually X} if
for every nontrivial element $g$, there exists a homomorphism $h:G\to H$
to a group $H$ with property $X$ such that $h\left(g\right)\ne1$.
A group $G$ is \emph{linear} if there exists a field $F$ and an
injective homomorphism $\phi:G\to\mathrm{GL}\left(m,\,F\right)$ for
some integer $m$. A group $G$ is \emph{virtually }$X$ if $G$ has
a subgroup which has property $X$ and is of finite index.

Mal'cev\citep{MR0003420} established in 1940 that a finitely generated
linear group is residually finite. Platonov \citep{MR0231897} further
demonstrated in 1968 that a finitely generated linear group is virtually
residually (finite $p$) for some prime $p$. Henceforth, we will
refer to a group as \emph{residually }$p$ if it is residually (finite
$p$).

In \citep{MR2138070}, Borisov and Sapir demonstrated that ascending
HNN extensions of finitely generated linear groups are residually
finite. Given that finitely generated linear groups are themselves
residually finite, a natural question arises: are there ascending
HNN extensions of linear groups that are non-linear? A result of Wehrfritz
\citep{MR0367080} is classical.

\noindent \begin{theorem}

\noindent \citep[Corollary 2.4]{MR0367080} Let $r$ and $s$ be integers
with $\left|r\right|,\left|s\right|>1$. Then
\begin{align*}
G & =\left\langle a,b,h\;|\;hah^{-1}=a^{r},\;hbh^{-1}=b^{s}\right\rangle 
\end{align*}
is non-linear.

\noindent \end{theorem}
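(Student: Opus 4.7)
The plan is to assume a faithful representation $\rho \colon G \hookrightarrow \mathrm{GL}(m, F)$ with $F$ algebraically closed and deduce a contradiction from the two conjugation relations. The eigenvalue argument comes first: since $\rho(a)$ is conjugate to $\rho(a)^r$, the $r$-th power map permutes the multiset of eigenvalues of $\rho(a)$, and iterating $m!$ times forces every eigenvalue to satisfy $\lambda^{r^{m!}} = \lambda$, so (as $\rho(a)$ is invertible) every eigenvalue is a root of unity; the same holds for $\rho(b)$. In characteristic $p > 0$ this already kills the representation, because some power $\rho(a^N)$ would then be unipotent and hence of $p$-power order, whereas $a$ has infinite order in $G$. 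So I may assume $\mathrm{char}(F) = 0$.

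Choose $N$ so that $U = \rho(a^N)$ and $V = \rho(b^N)$ are unipotent, and set $X = \log U$, $Y = \log V$ (nonzero nilpotent matrices). Replacing $\rho(h)$ by its Jordan semisimple part $h_s$ still gives $\mathrm{Ad}(h_s)(X) = rX$ and $\mathrm{Ad}(h_s)(Y) = sY$, and $\mathrm{Ad}(h_s)$ acts semisimply on $\mathfrak{gl}_m(F)$. Form the Lie subalgebra $\mathfrak{L} \subseteq \mathfrak{gl}_m(F)$ generated by $X$ and $Y$. It is finite-dimensional and $\mathrm{Ad}(h_s)$-invariant, and every iterated Lie bracket involving $i$ copies of $X$ and $j$ copies of $Y$ is an eigenvector of $\mathrm{Ad}(h_s)$ with eigenvalue $r^i s^j$; since $|r|,|s| > 1$ and $i+j \geq 1$, every eigenvalue of $\mathrm{Ad}(h_s)|_{\mathfrak{L}}$ has absolute value strictly greater than $1$.

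The critical step is to conclude that $\mathfrak{L}$ is solvable. In the Levi decomposition $\mathfrak{L} = \mathfrak{s} \ltimes \mathrm{rad}(\mathfrak{L})$, the operator $\mathrm{Ad}(h_s)$ preserves the radical and induces a Lie algebra automorphism of the semisimple quotient $\mathfrak{s}$. Any such automorphism preserves the nondegenerate Killing form of $\mathfrak{s}$, so its eigenvalues come in reciprocal pairs $(\lambda, \lambda^{-1})$; but those eigenvalues form a submultiset of the eigenvalues on $\mathfrak{L}$, all of which have absolute value $>1$, so no reciprocal pair can appear. Hence $\mathfrak{s} = 0$, and $\mathfrak{L}$ is solvable. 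By Lie's theorem in characteristic $0$, $\mathfrak{L}$ is simultaneously upper-triangularizable; nilpotency of $X, Y$ then places them in the strictly upper-triangular part, so $\mathfrak{L}$ itself is nilpotent. Consequently $\langle U, V \rangle \subseteq \exp(\mathfrak{L})$ lies in a unipotent Lie group and is a nilpotent group. This contradicts the fact that $\langle a^N, b^N \rangle$ is free of rank $2$ (as a two-generator subgroup of the free base $\langle a, b \rangle \leq G$ of the HNN extension) together with faithfulness of $\rho$.

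The principal obstacle is this Lie-theoretic step: combining the strict inequality $|r^i s^j| > 1$ with the reciprocal-pair constraint supplied by the Killing form to kill any semisimple part of $\mathfrak{L}$. Everything else---the eigenvalue argument, the characteristic $p$ elimination, Lie's theorem, and the freeness of $\langle a, b \rangle$ as the base of the HNN structure---is standard.
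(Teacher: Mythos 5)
Your proof is correct, and it takes a genuinely more self-contained route than the paper, which does not prove Theorem 1.1 from scratch: it cites Wehrfritz and, in Section 3, reproduces only one portion of his argument as Theorem 3.1. There, assuming linearity, the subgroups $\left\langle a,t\right\rangle$ and $\left\langle b,t\right\rangle$ are copies of the solvable group $BS\left(1,r\right)$, Mal'cev's theorem on solvable linear groups supplies a triangularizable normal subgroup of uniformly bounded index $m$, whence $a^{m\left(r^{m}-1\right)}$ and $b^{m\left(r^{m}-1\right)}$ are unipotent, the characteristic is forced to be $0$, and Wehrfritz's Lemma 2.3 (cited as a black box) gives that $\left\langle a^{s},b^{s}\right\rangle$ is nilpotent --- contradicting freeness of the base of the HNN extension. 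You reach the same pivot (high powers of $a$ and $b$ generate a nilpotent group) but by different means: unipotence of powers comes from the elementary observation that conjugacy of $\rho\left(a\right)$ to $\rho\left(a\right)^{r}$ makes the $r$-th power map permute the eigenvalue multiset, so all eigenvalues are roots of unity (this replaces the appeal to Mal'cev and also disposes of positive characteristic immediately), and you then make explicit the Lie-algebra content hidden in Wehrfritz's Lemma 2.3: the $\mathrm{Ad}\left(h_{s}\right)$-eigenvalues $r^{i}s^{j}$ on the algebra $\mathfrak{L}$ generated by $\log\rho\left(a^{N}\right)$ and $\log\rho\left(b^{N}\right)$ all have absolute value greater than $1$, which kills any Levi factor via the Killing form and forces nilpotency. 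All the individual steps check out (the passage from $\mathrm{Ad}\left(\rho\left(h\right)\right)X=rX$ to $\mathrm{Ad}\left(h_{s}\right)X=rX$ is justified because the $r$-eigenspace of an operator sits inside the $r$-eigenspace of its semisimple part, and the induced automorphism of the semisimple quotient is indeed diagonalizable with eigenvalues a submultiset of those on $\mathfrak{L}$). One cosmetic remark: the Levi/Killing-form detour is avoidable, since brackets multiply the eigenvalues and every eigenvalue has absolute value at least $2$, so the lower central series of the finite-dimensional algebra $\mathfrak{L}$ terminates and $\mathfrak{L}$ is nilpotent outright; either way the conclusion contradicts the fact that $\left\langle a^{N},b^{N}\right\rangle$ is nonabelian free inside the free base $\left\langle a,b\right\rangle$ of the HNN extension.
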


Using this theorem, Dru\textcommabelow{t}u\textendash Sapir \citep{MR2115010}
provided the first example of non-linear residually finite 1-related
group, and offered an alternative proof of Theorem 1.1. Following
the work of Borisov\textendash Sapir \citep{MR2138070} and Dru\textcommabelow{t}u\textendash Sapir
\citep{MR2115010}, more examples of finitely generated non-linear
residually finite groups have been discovered or re-discovered. In
particular, Tholozan and Tsouvalas in \citep{tholozan2022residually}
provided the first examples of finitely generated non-linear hyperbolic
residually finite groups.

In \citep{MR2533795}, Borisov and Sapir also proved that ascending
HNN extensions of finitely generated free groups are virtually residually
$p$ for every sufficiently large prime $p$. Note that this theorem
also implies that there are finitely generated non-linear residually
$p$ groups from Wehrfritz's non-linear groups, since a virtually
linear group is linear by the induced representation. Recently, Chong
and Wise \citep{MR4388367} showed that there exist uncountably many
non-linear finitely generated residually finite groups.

On the other hand, Kharlampovich, Myasnikov and Sapir in \citep{MR3671739}
showed that there are finitely generated non-linear solvable residually
finite groups of solvability class 3, by using Bou-Rabee's depth function.
The non-linear groups of \citep{MR2115010} and \citep{MR3671739}
are mutually exclusive; the former have a nonabelian free subgroup
and the latter are solvable. We construct the first examples of finitely
generated non-linear solvable residually $p$ groups.

Denote by $\mathbb{F}_{2}$ the finite field of order 2. For each
nonzero integer $n$, define an injective endomorphism 
\begin{align*}
\rho_{n} & :\mathrm{GL}\left(3,\,\mathbb{F}_{2}\left[t,\,t^{-1}\right]\right)\to\mathrm{GL}\left(3,\,\mathbb{F}_{2}\left[t,\,t^{-1}\right]\right)
\end{align*}
by
\begin{align*}
\rho_{n}\left(\begin{array}{ccc}
a_{11}\left(t\right) & a_{12}\left(t\right) & a_{13}\left(t\right)\\
a_{21}\left(t\right) & a_{22}\left(t\right) & a_{23}\left(t\right)\\
a_{31}\left(t\right) & a_{32}\left(t\right) & a_{33}\left(t\right)
\end{array}\right) & :=\left(\begin{array}{ccc}
a_{11}\left(t^{n}\right) & a_{12}\left(t^{n}\right) & a_{13}\left(t^{n}\right)\\
a_{21}\left(t^{n}\right) & a_{22}\left(t^{n}\right) & a_{23}\left(t^{n}\right)\\
a_{31}\left(t^{n}\right) & a_{32}\left(t^{n}\right) & a_{33}\left(t^{n}\right)
\end{array}\right).
\end{align*}

Define two matrices $c$ and $d$ in $\mathrm{GL}\left(3,\,\mathbb{F}_{2}\left[t,t^{-1}\right]\right)$
by
\begin{align*}
c & :=\left(\begin{array}{ccc}
t & 1+t & 1+t\\
1+t & t & 1+t\\
1+t & 1+t & t
\end{array}\right),\;d:=\left(\begin{array}{ccc}
t & 0 & 0\\
0 & 1 & 0\\
0 & 0 & 1
\end{array}\right).
\end{align*}

Let $G$ be the group generated by $c$ and $d$. By induction, for
every nonzero integer $n$ we have
\begin{align*}
c^{n} & =\rho_{n}\left(c\right),\;d^{n}=\rho_{n}\left(d\right).
\end{align*}

Thus, the restriction $\rho_{n}|_{G}$ becomes an injective endomorphism
from $G$ into itself. Denote by $H_{n}$ the HNN extension of $G$
by $\rho_{n}|_{G}$. We state the main results.

\noindent \begin{theorem}

For every nonzero integer $n$, $H_{n}$ is solvable of class 4.

\noindent \end{theorem}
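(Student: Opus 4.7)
The plan is to prove Theorem 1.2 in two stages: first establish that $G = \langle c, d \rangle$ is solvable of derived length exactly $3$, then leverage the ascending HNN structure to conclude that $H_n$ has derived length exactly $4$.

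For the first stage, the key observation is that $c$ and $d$ simultaneously triangularize. Under the change of basis of $\mathbb{F}_2^3$ given by $P = [e_2 + e_3 \mid e_1 \mid e_2]$, a direct computation shows that the all-ones matrix $J$ and the matrix unit $E_{11}$ both become upper triangular; in particular $E_{11}$ becomes $\mathrm{diag}(0,1,0)$. Since $c = I + (1+t) J$ and $d = I + (1+t) E_{11}$, this places $G$ inside $\mathrm{UT}_3(\mathbb{F}_2[t, t^{-1}])$. Over any commutative ring $R$, the group $\mathrm{UT}_3(R)$ is solvable of derived length at most $3$: its derived subgroup lies in the unipotent subgroup $U_3(R)$, which is $2$-step nilpotent with $[U_3, U_3] = Z(U_3) = \{I + y E_{13} : y \in R\}$. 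To pin down the exact derived length of $G$, compute $[c, d] \in U_3$ explicitly; a subsequent commutator such as $[\,[c,d],\, c\,[c,d]\,c^{-1}\,]$, combined with the unipotent commutator formula $[I + M_1,\, I + M_2] = I + (M_1 M_2 - M_2 M_1)$ in $U_3$, produces a nonzero element of $G^{(2)}$ of the form $I + f(t) E_{13}$ with $f \neq 0$, so $G$ has derived length exactly $3$.

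For the second stage, $H_n$ carries a normal subgroup $N = \bigcup_{k \ge 0} h^{-k} G h^k$ with $H_n / N \cong \mathbb{Z}$ generated by the image of $h$. Since $\rho_n$ is injective and $h g h^{-1} = \rho_n(g) \in G$ for every $g \in G$, the union is ascending with each $h^{-k} G h^k \cong G$, so any commutator of bounded depth in $N$ is contained in a single $G_k \cong G$. Consequently $N$ has derived length $3$. Combined with the abelian quotient, this yields $H_n^{(k+1)} \subset N^{(k)}$, hence $H_n^{(4)} \subset N^{(3)} = \{1\}$. For the matching lower bound $H_n^{(3)} \ne \{1\}$, embed $H_n$ into $\mathrm{UT}_3(R_\infty) \rtimes \langle h \rangle$, where $R_\infty = \mathbb{F}_2[t^{\pm 1/n^k} : k \ge 0]$ is the ring obtained by adjoining all $n^k$-th roots of $t$ and $h$ acts via $t^{1/n^k} \mapsto t^{1/n^{k-1}}$. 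In this faithful matrix model, exhibit an explicit triple commutator---for instance, one built from $[h, c]$, $[h, d]$, and $[c, d]$---that survives with nonzero image in the central slot $\{I + y E_{13}\}$.

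The main obstacle is the sharpness check in the second stage. Natural candidate triple commutators often collapse because $Z(U_3)$ is abelian and is preserved by the $h$-action, so one must carefully mix the $h$-action with all three layers of the derived series of $G$; the explicit matrix embedding into $\mathrm{UT}_3(R_\infty) \rtimes \mathbb{Z}$ is the most reliable way to verify nontriviality.
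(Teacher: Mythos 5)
Your route to the upper bound is correct and genuinely different from the paper's. Writing $c=I+(1+t)J$ and $d=I+(1+t)E_{11}$ over $\mathbb{F}_2$ and changing basis to $e_2+e_3,\,e_1,\,e_2$ does simultaneously triangularize ($J$ kills $e_2+e_3$ and sends both $e_1$ and $e_2$ to $e_1+e_2+e_3$), so $G$ sits inside the invertible upper triangular group, which has derived length at most $3$; your ascending-union observation $N^{(i)}=\bigcup_k G_k^{(i)}$ together with $H_n/N\cong\mathbb{Z}$ then gives $H_n^{(4)}=1$. The paper reaches the same length-$4$ abelian series by a much heavier route: it computes an explicit presentation of $NC=\ker(\det|_G)$ (Lemma 2.1, Appendix A), shows $(NC)'\cong E_{2^{\infty}}$ and that $\widetilde{NC}_n$ is nilpotent of class $2$ (Lemma 2.4), and exhibits $1\trianglelefteq(\widetilde{NC}_n)'\trianglelefteq\widetilde{NC}_n\trianglelefteq\widetilde{G}_n\trianglelefteq H_n$. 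For Theorem 1.2 alone your triangularization is more economical; what the paper's presentation machinery buys is reuse throughout the rest of the argument (non-finite-presentability of $G$ in Lemma 3.4, the quotients $Q_n$ and the residual-$2$ argument in Section 4).

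On sharpness, two remarks. First, the paper's own proof also only produces the abelian series, i.e.\ derived length at most $4$, so you are attempting more than the paper does; your stage-one claim $G''\ne 1$ is guaranteed to succeed (the paper's relation $[b_1,b_2]=b_2^2\ne 1$ already exhibits a nontrivial element of $G''$), but as written it is asserted rather than computed. Second, your plan for $H_n^{(3)}\ne 1$ via an explicit triple commutator in a Puiseux-type model is the hard way and is the genuinely incomplete step of your proposal; there is a one-line alternative: $[c,t^{-1}]=c^{n-1}$ and $[d,t^{-1}]=d^{n-1}$, so $H_n'$ contains $\langle c^{n-1},d^{n-1}\rangle=\rho_{n-1}(G)\cong G$ whenever $n\ne 1$, and since $G$ has derived length $3$ this forces $H_n$ to have derived length exactly $4$. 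Note finally that for $n=1$ the extension is $G\times\mathbb{Z}$, of derived length $3$, so the exact statement fails there and only the upper bound holds uniformly in $n$.
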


\noindent \begin{theorem}

For every integer $n$ such that $\left|n\right|>1$, $H_{n}$ is
non-linear.

\noindent \end{theorem}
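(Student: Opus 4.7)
The approach is to assume for contradiction that $H_n$ admits a faithful representation $\phi\colon H_n\hookrightarrow\mathrm{GL}(m,F)$ for some algebraically closed field $F$, and to derive a contradiction by combining the HNN relations with the fine structure of $G$.

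First I carry out an eigenvalue analysis. The relations $hch^{-1}=c^n$ and $hdh^{-1}=d^n$ together with $\left|n\right|>1$ force the eigenvalue multisets of $\phi(c)$ and $\phi(d)$ to be closed under $\lambda\mapsto\lambda^n$; being finite, every eigenvalue must be a root of unity. If $F$ had positive characteristic, quasi-unipotent elements would have finite order, contradicting the fact that $c$ and $d$ have infinite order in $G\leq\mathrm{GL}\left(3,\,\mathbb{F}_{2}\left[t,t^{-1}\right]\right)$. So $F$ has characteristic zero, and one may choose positive integers $N,M$ with $\phi(c^N)$ and $\phi(d^M)$ unipotent.

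Next comes the Lie-theoretic heart. Setting $X=\log\phi(c^N)$ and $Y=\log\phi(d^M)$ in $\mathfrak{gl}(m,F)$, the HNN relations give $\mathrm{Ad}(\phi(h))X=nX$ and $\mathrm{Ad}(\phi(h))Y=nY$. The Lie subalgebra $\mathfrak{l}$ generated by $X$ and $Y$ is therefore graded, with the span $\mathfrak{l}_k$ of length-$k$ iterated brackets contained in the $n^k$-eigenspace of $\mathrm{Ad}(\phi(h))$. Since $\mathrm{Ad}(\phi(h))$ has at most $m^{2}$ distinct eigenvalues on $\mathfrak{gl}(m,F)$ and the powers $n^{k}$ are pairwise distinct, only finitely many $\mathfrak{l}_k$ are nonzero; Jacobi then forces $\mathfrak{l}$ to be nilpotent. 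Moreover, because each eigenvalue $n^k$ with $k\geq 1$ strictly permutes the $\phi(h)$-eigenspace decomposition and $\left|n\right|>1$ makes the associated shift-graph on eigenvalues acyclic, each element of $\mathfrak{l}$ is nilpotent as a matrix. Baker--Campbell--Hausdorff then places $\phi(c^N),\phi(d^M)$ inside the nilpotent Lie group $\exp(\mathfrak{l})$, and faithfulness of $\phi$ makes $\langle c^{N},d^{M}\rangle\leq G$ itself nilpotent.

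The main obstacle is the concluding step: showing that $\langle c^{N},d^{M}\rangle$ is not nilpotent inside $G$ for any $N,M\geq1$. I would work with the $u$-adic filtration of the principal congruence subgroup of $\mathrm{GL}\left(3,\,\mathbb{F}_{2}\left[t,t^{-1}\right]\right)$ where $u:=1+t$, noting that $c=I+uJ$ (with $J$ the all-ones $3\times 3$ matrix) and $d=I+uE_{11}$. A Lucas--Kummer binomial computation yields $c^{N}\equiv I+u^{2^{v_{2}(N)}}J$ and $d^{M}\equiv I+u^{2^{v_{2}(M)}}E_{11}$ modulo higher $u$-order. The matrix identities $[J,E_{11}]=C_{1}+R_{1}\neq0$ in $\mathfrak{gl}(3,\mathbb{F}_{2})$, where $C_{1}$ and $R_{1}$ denote the first-column and first-row indicator matrices, together with $[C_{1}+R_{1},J]=C_{1}+R_{1}$ (stable under repeated bracketing with $J$), transport by the standard leading-term argument for commutators in congruence filtrations to the conclusion that the iterated group commutators $[\ldots[[c^{N},d^{M}],c^{N}],\ldots,c^{N}]$ all have nonzero leading $u$-adic coefficient $C_{1}+R_{1}$, hence are non-trivial in $G$. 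This contradicts nilpotence and completes the proof. I expect this concrete valuation computation, rather than the Lie-algebraic reduction, to be the technical core of the argument.
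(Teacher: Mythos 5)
Your proposal is correct, and while its first half is essentially the paper's route, its concluding step is genuinely different and arguably more elementary. The reduction from linearity of $H_{n}$ to nilpotence of some $\left\langle c^{N},d^{M}\right\rangle$ --- quasi-unipotence of $\phi(c),\phi(d)$ from the conjugacy $\phi(c)\sim\phi(c)^{n}$, the deduction that $F$ has characteristic $0$, and the $\mathrm{Ad}(\phi(h))$-grading of the Lie algebra generated by the logarithms --- is exactly the content of the paper's Theorem 3.1, which reproduces Wehrfritz's argument via Mal'cev triangularization and \citep[Lemma 2.3]{MR0367080}; your Lie-theoretic phrasing is a valid unpacking of that (the claim that every element of $\mathfrak{l}$ is a nilpotent matrix deserves a line of justification, e.g.\ via Lie's theorem and the vanishing of all weights on the nilpotent generators and on $[\mathfrak{l},\mathfrak{l}]$, but it is standard). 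Where you diverge is in showing non-nilpotence. The paper exploits that $\left\langle c^{s},d^{s}\right\rangle=\rho_{s}(G)\cong G$ and then proves $G$ is not nilpotent indirectly, by showing $G$ is not finitely presented (Lemmas 3.2--3.4, via Reidemeister--Schreier, the endomorphism $\eta$, and a minimality argument) and invoking finite presentability of f.g.\ nilpotent groups. You instead compute directly in the $(1+t)$-adic congruence filtration: writing $u=1+t$, one checks $c=I+uJ$, $d=I+uE_{11}$, that $c^{N}$ and $d^{M}$ have leading coefficients $J$ and $E_{11}$ at levels $2^{v_{2}(N)}$ and $2^{v_{2}(M)}$, that $[J,E_{11}]=C_{1}+R_{1}\neq0$ and $[C_{1}+R_{1},J]=C_{1}+R_{1}$, so the iterated group commutators against $c^{N}$ never die. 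I verified these matrix identities over $\mathbb{F}_{2}$ and the leading-term formula for group commutators in this filtration; the argument is sound and even handles $N\neq M$ directly, which the paper's formulation avoids by producing a single exponent $s$. Your route is shorter and self-contained for Theorem 1.3; the paper's route costs more but yields the additional structural fact that $G$ is not finitely presented.
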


In our proof of Theorem 1.3 in Section 3, we employ the method (Theorem
3.1) that Wehrfritz used to establish Theorem 1.1.

Generalizing $\rho_{n}$, define an injective endomorphism
\begin{align*}
\rho_{n}^{m} & :\mathrm{GL}\left(m,\,\mathbb{F}_{2}\left[t,\,t^{-1}\right]\right)\to\mathrm{GL}\left(m,\,\mathbb{F}_{2}\left[t,\,t^{-1}\right]\right)
\end{align*}
for every integer $m$ such that $m\ge3$, as each $A\in\mathrm{GL}\left(m,\,\mathbb{F}_{2}\left[t,t^{-1}\right]\right)$
satisfies
\begin{align*}
\left(\rho_{n}^{m}\left(A\right)\right)_{ij}\left(t\right) & =A_{ij}\left(t^{n}\right).
\end{align*}

\noindent \begin{corollary}

For every pair of integers $\left(m,n\right)$ such that $m\ge3$
and $\left|n\right|>1$, the HNN extension of $\mathrm{GL}\left(m,\,\mathbb{F}_{2}\left[t,t^{-1}\right]\right)$
by $\rho_{n}^{m}$ is finitely generated, non-linear, and residually
finite.

\noindent \end{corollary}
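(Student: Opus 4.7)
The plan is to verify the three assertions---finite generation, residual finiteness, and non-linearity---separately, leveraging Theorem 1.3 together with the classical results cited in the introduction.

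For finite generation, I would first establish that $\mathrm{GL}\left(m,\,\mathbb{F}_{2}\left[t,\,t^{-1}\right]\right)$ itself is finitely generated whenever $m\ge3$. This is a standard consequence of K-theoretic stability: $\mathrm{SL}_m$ over a commutative ring is generated by elementary matrices $E_{ij}(r)$, and the Steinberg commutator relations $[E_{ij}(t^{a}),E_{jk}(t^{b})]=E_{ik}(t^{a+b})$, valid for distinct $i,j,k$ (which exist because $m\ge3$), reduce the needed generators to a finite set such as $\{E_{ij}(1),\,E_{ij}(t),\,E_{ij}(t^{-1}):i\ne j\}$. Adjoining the matrix $\mathrm{diag}(t,1,\ldots,1)$ fills out $\mathrm{GL}$ via the determinant surjection onto the infinite cyclic group of units. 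Since an ascending HNN extension of a finitely generated group is generated by generators of the base together with the stable letter, the HNN extension is finitely generated.

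For residual finiteness, I would invoke Borisov--Sapir \citep{MR2138070}: ascending HNN extensions of finitely generated linear groups are residually finite. The base $\mathrm{GL}\left(m,\,\mathbb{F}_{2}\left[t,\,t^{-1}\right]\right)$ is tautologically linear over $\mathbb{F}_{2}(t)$, is finitely generated by the previous step, and $\rho_{n}^{m}$ is injective by construction, so the hypothesis is met. For non-linearity, I would embed the group $H_{n}$ from Theorem 1.3 into the HNN extension of the corollary and derive a contradiction. The block inclusion
\begin{align*}
\iota:\mathrm{GL}\left(3,\,\mathbb{F}_{2}\left[t,\,t^{-1}\right]\right)\hookrightarrow\mathrm{GL}\left(m,\,\mathbb{F}_{2}\left[t,\,t^{-1}\right]\right),\quad A\mapsto\begin{pmatrix}A & 0\\0 & I_{m-3}\end{pmatrix}
\end{align*}
satisfies $\iota\circ\rho_{n}=\rho_{n}^{m}\circ\iota$, so the universal property of HNN extensions produces a homomorphism from $H_{n}$ to the target HNN extension; Britton's lemma applied to reduced forms in the target verifies injectivity. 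Since $H_{n}$ is non-linear by Theorem 1.3 and subgroups of linear groups are linear, the ambient HNN extension cannot be linear.

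The main obstacle I anticipate is the first step. Producing a clean finite generating set for $\mathrm{GL}\left(m,\,\mathbb{F}_{2}\left[t,\,t^{-1}\right]\right)$ demands care with the Steinberg machinery, and the hypothesis $m\ge3$ is genuinely essential: by Nagao-type decompositions, $\mathrm{SL}_{2}$ over $\mathbb{F}_{2}[t]$ is not even finitely generated. Once this is in hand, the remaining two assertions are essentially immediate from Borisov--Sapir and Theorem 1.3 respectively.
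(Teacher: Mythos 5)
Your proposal is correct and follows essentially the same route as the paper: the paper simply cites Suslin's theorem that $\mathrm{SL}\left(m,\,\mathbb{F}_{2}\left[t,t^{-1}\right]\right)$ is generated by finitely many elementary matrices for $m\ge3$ (the Steinberg-commutator mechanism you sketch is exactly what underlies that result), invokes Borisov--Sapir for residual finiteness, and gets non-linearity from Theorem 1.3 together with the fact that subgroups of linear groups are linear. The only step you elaborate beyond the paper is the embedding of $H_{n}$ via the block inclusion, where a bare appeal to Britton's lemma would additionally require checking that the image of $G$ meets $\rho_{n}^{m}\left(\mathrm{GL}\left(m,\,\mathbb{F}_{2}\left[t,t^{-1}\right]\right)\right)$ exactly in $\rho_{n}^{m}\left(G\right)$; the cleaner justification is the direct-limit description of an ascending HNN extension as $\varinjlim\left(L,\phi\right)\rtimes\mathbb{Z}$, from which injectivity is immediate by exactness of direct limits.
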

\begin{proof}

Suslin in \citep{MR0472792} proved that for $m\ge3$, $\mathrm{SL}\left(m,\,\mathbb{F}_{2}\left[t,t^{-1}\right]\right)$
is finitely generated by elementary matrices. Thus, $\mathrm{GL}\left(m,\,\mathbb{F}_{2}\left[t,t^{-1}\right]\right)$
is also finitely generated, and by \citep{MR2138070}, the HNN extension
is residually finite. The non-linearity follows from Theorem 1.3 and
the fact that a subgroup of a linear group is also linear.$\qedhere$

\noindent \end{proof}
\begin{theorem}

For every integer $n$, $H_{n}$ is a residually $2$ group if and
only if $n$ is odd.

\noindent \end{theorem}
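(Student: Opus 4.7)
\noindent The plan is to handle the two implications separately.

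\emph{Only if ($n$ even).} I would show $\langle c,h\rangle\le H_n$ is isomorphic to the Baumslag--Solitar group $BS(1,n)=\langle C,H\mid HCH^{-1}=C^n\rangle$: the $(1,1)$-entry of $c^k=\rho_k(c)$ is $t^k$, so $c$ has infinite order, and a short Britton-lemma argument using $\rho_n|_{\langle c\rangle}\colon c\mapsto c^n$ gives the embedding. Any homomorphism $\phi\colon BS(1,n)\to P$ into a finite $2$-group sends $h$ to an element of $2$-power order $2^l$, so iterating $hch^{-1}=c^n$ yields $\phi(c)^{n^{2^l}-1}=1$; for $n$ even the exponent $n^{2^l}-1$ is odd, forcing $\phi(c)=1$. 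Thus $c\ne 1$ in $H_n$ dies in every finite $2$-group quotient, so $H_n$ is not residually $2$.

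\emph{If ($n$ odd).} First I would produce a separating family of finite $2$-group quotients for $G$ itself. Since $c\equiv d\equiv I\pmod{(1+t)}$, $G$ lies in the principal congruence subgroup of $\mathrm{GL}(3,R)$ at $(1+t)$, where $R=\mathbb{F}_2[t,t^{-1}]$. Set $R_k=R/(1+t)^{2^k}$ and $K_k=\{g\in G:g\equiv I\pmod{(1+t)^{2^k}}\}$. Then $G/K_k$ embeds into the principal congruence subgroup of $\mathrm{GL}(3,R_k)$, which in characteristic $2$ has exponent dividing $2^k$ since $(I+(1+t)B)^{2^k}=I+(1+t)^{2^k}B^{2^k}=I$. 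Hence $G/K_k$ is a finite $2$-group, and $\bigcap_k K_k=\{1\}$ because $R$ is a UFD.

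To lift this to $H_n$, the crucial point is that for $n$ odd the endomorphism $\rho_n$ descends to an automorphism $\bar\rho_n$ of $G/K_k$ whose order is a power of $2$. The factorisation $1+t^n=(1+t)(1+t+\cdots+t^{n-1})$ shows $(1+t)\mid(1+t^n)$, so $\rho_n$ preserves every $(1+t)^{2^k}$ and descends to a ring map on $R_k$. Writing $s=1+t$, this map is $\sigma_n\colon s\mapsto(1+s)^n-1$, and one checks $\sigma_n\circ\sigma_m=\sigma_{nm}$. Since $(1+s)^{2^k}=1+s^{2^k}=1$ (Frobenius) while $(1+s)^{2^{k-1}}\ne 1$, the unit $1+s\in R_k^\times$ has order exactly $2^k$; consequently the order of $\sigma_n$ equals the multiplicative order of $n$ in $(\mathbb{Z}/2^k)^\times$, which is a $2$-group, so that order is $2^{l_k}$ for some $l_k$.

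Then $Q_k:=(G/K_k)\rtimes_{\bar\rho_n}\mathbb{Z}/2^{l_k}$ is a finite $2$-group, and $\pi_k\colon H_n\to Q_k$ defined by $g\mapsto\bar g$ and $h\mapsto$ generator is well-defined since it respects $hgh^{-1}=\rho_n(g)$. Given nontrivial $w\in H_n$, decompose $H_n=H'\rtimes\langle h\rangle$ with $H'=\bigcup_p h^{-p}Gh^p$ the normal closure of $G$, writing $w=w_0h^a$ uniquely with $w_0\in H'$. If $a\ne 0$, the composition $H_n\twoheadrightarrow\langle h\rangle\cong\mathbb{Z}\to\mathbb{Z}/2^l$ separates $w$ for $l$ large. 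If $a=0$, then $w=h^{-p}gh^p$ for some $p\ge 0$ and $g\in G\setminus\{1\}$; choosing $k$ with $g\notin K_k$ yields $\pi_k(w)=\bar\rho_n^{-p}(\bar g)\ne 1$ since $\bar\rho_n$ is an automorphism. The main technical obstacle is the $2$-power order of $\bar\rho_n$, which reduces to the order of $n$ in $(\mathbb{Z}/2^k)^\times$ and essentially uses $n$ odd: for $n$ even the induced map $\sigma_n$ is not even injective on $R_k$ (e.g., $\sigma_2(s)=s^2$), mirroring the obstruction surfaced by the $BS(1,n)$-subgroup argument.
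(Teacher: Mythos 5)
Your argument is correct, and the ``if'' direction takes a genuinely different route from the paper. For the ``only if'' direction you and the paper do essentially the same thing: the paper realizes $BS(1,n)=\left\langle d,t\right\rangle$ as a retract of $H_{n}$ (Lemma 4.5) and quotes Moldavanski\u{\i}'s criterion, while you re-derive the obstruction by hand inside $\left\langle c,t\right\rangle$; both are fine. For the ``if'' direction the paper works combinatorially: it computes presentations of $Q_{2^{m}}=G/\left\langle \!\!\left\langle d^{2^{m}}\right\rangle \!\!\right\rangle _{G}$ (Lemma 4.1, resting on the Appendix A presentation of $NC$), proves via the four lemma that $\left\langle b_{0},\dots,b_{2^{r}-1}\right\rangle _{G}$ injects into the corresponding subgroup of $Q_{2^{r+1}}$ (Corollary 4.3), and shows $P_{n,m}=H_{n}/\left\langle \!\!\left\langle d^{2^{m}},t^{2^{m}}\right\rangle \!\!\right\rangle$ is a finite $2$-group of the form $Q_{2^{m}}\rtimes\mathbb{Z}/2^{m}\mathbb{Z}$ (Lemma 4.4, where the divisibility $2^{m+2}\mid n^{2^{m}}-1$ uses $n$ odd). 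You instead use congruence quotients at the prime $(1+t)$ of $\mathbb{F}_{2}[t,t^{-1}]$: finiteness and the $2$-group property of $G/K_{k}$ are immediate from the Frobenius identity $(I+sB)^{2^{k}}=I+s^{2^{k}}B^{2^{k}}$ (valid since $I$ commutes with $sB$) together with Cauchy's theorem, triviality of $\bigcap_{k}K_{k}$ is immediate from unique factorization, and the role of the parity of $n$ is isolated cleanly in the fact that $t\mapsto t^{n}$ induces an automorphism of $R/(1+t)^{2^{k}}$ of order equal to the order of $n$ in $(\mathbb{Z}/2^{k}\mathbb{Z})^{\times}$, a $2$-power. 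This buys you a proof that bypasses the entire presentation machinery of Section 4 and Appendix A, and it makes the $2$-congruence structure of Remark 4.6 completely explicit; the paper's route, by contrast, yields the precise structure of the finite quotients (orders, centers, exact sequences) as a by-product. The one point worth spelling out in a final write-up is why $\bar{\rho}_{n}$ restricts to a \emph{surjection} of the image of $G$ in $\mathrm{GL}(3,R_{k})$ (needed both for the semidirect product and for $\bar{\rho}_{n}^{-p}(\bar{g})\neq1$): this follows because $\bar{\rho}_{n}$ has finite order $N$ on $\mathrm{GL}(3,R_{k})$ and $\bar{\rho}_{n}^{N-1}=\bar{\rho}_{n^{N-1}}$ again maps the image of $G$ into itself, so $\bar{\rho}_{n}\circ\bar{\rho}_{n}^{N-1}=\mathrm{id}$ exhibits surjectivity. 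With that observation included, the argument is complete.
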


Combining Theorem 1.2, Theorem 1.3 and Theorem 1.5, we establish the
result stated in the abstract.

\noindent \begin{corollary}

For every integer $n$ such that $\left|n\right|>1$ and $n$ is odd,
$H_{n}$ is finitely generated, non-linear, solvable, and residually
2.

\noindent \end{corollary}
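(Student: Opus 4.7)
The plan is essentially a bookkeeping exercise: Corollary 1.6 asserts four properties of $H_n$ under the hypotheses $|n|>1$ and $n$ odd, and three of them are precisely the content of Theorems 1.2, 1.3, and 1.5, while the fourth (finite generation) is visible from the construction itself. So the strategy is simply to check that the hypotheses of the corollary activate each of the three main theorems, and to verify finite generation by hand.

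First I would dispose of finite generation. By construction $G = \langle c, d\rangle$ is generated by two elements, and $H_n$ is the HNN extension of $G$ with the single stable letter adjoined to implement $\rho_n|_G$ via the relations $hch^{-1} = c^n$ and $hdh^{-1} = d^n$ (using the identity $\rho_n(c) = c^n$, $\rho_n(d) = d^n$ recorded in the introduction). Hence $H_n$ is generated by the three elements $c, d, h$.

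Next, since $|n| > 1$ in particular implies $n \ne 0$, Theorem 1.2 applies and yields that $H_n$ is solvable of class 4. The hypothesis $|n| > 1$ is precisely what Theorem 1.3 requires, giving non-linearity of $H_n$. Finally, the assumption that $n$ is odd is exactly the "if" direction of the equivalence in Theorem 1.5, so $H_n$ is residually 2.

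There is no real obstacle here; the entire content of the corollary is distributed across Theorems 1.2, 1.3, and 1.5, and the only genuine verification is that the three hypotheses in the corollary ($n$ nonzero, $|n|>1$, $n$ odd) are simultaneously satisfiable and correctly feed into each invoked theorem, which is immediate since the corollary explicitly assumes $|n|>1$ and $n$ odd. The proof will therefore be one or two sentences, with a parenthetical remark about the $\{c,d,h\}$ generating set for finite generation.
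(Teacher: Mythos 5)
Your proposal matches the paper's proof exactly: the corollary is obtained by combining Theorems 1.2, 1.3, and 1.5, with finite generation immediate since $H_{n}$ is generated by $c$, $d$, and the stable letter of the HNN extension. No issues.
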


Note that $H_{n}$ is an ascending HNN extension of a finitely generated
linear group, making it residually finite by Borisov\textendash Sapir
\citep{MR2138070}. In Section 4, we will prove that $H_{n}$ is an
extension of the Baumslag\textendash Solitar group $BS\left(1,n\right)$
by a nilpotent group (Lemma 4.5). The group $H_{n}$ shares solvability
and residual properties with $BS\left(1,n\right)$. For any nonzero
integer $n$, both $BS\left(1,n\right)$ and $H_{n}$ are solvable
(Theorem 1.2), residually finite \citep{MR0285589}, and residually
2 if and only if $n$ is odd \citep[Teorema 2]{MR3076943}, (Theorem
1.5).

\noindent \begin{remark}

The argument we have used to prove Corollary 1.4 can be applied to
any supergroup $\widetilde{G}$ of $G$ in $\mathrm{GL}\left(m,\,\mathbb{F}_{2}\left[t,t^{-1}\right]\right)$
such that $\rho_{n}^{m}$ induces an endomorphism on $\widetilde{G}$.
For example, since $\rho_{-1}$ is an involution of $\mathrm{GL}\left(3,\,\mathbb{F}_{2}\left[t,t^{-1}\right]\right)$,
we define the unitary group of the form induced by $\rho_{-1}$:
\begin{align*}
U & :=\left\{ A\in\mathrm{GL}\left(3,\,\mathbb{F}_{2}\left[t,\,t^{-1}\right]\right)\;:\;A^{T}\rho_{-1}\left(A\right)=I\right\} ,
\end{align*}
where $A^{T}$ means the matrix transpose. Then, $U$ is finitely
generated by $c,d$ and the orthogonal subgroup of $\mathrm{SL}\left(3,\,\mathbb{F}_{2}\right)$,
isomorphic to the symmetric group $S_{3}$, which is easily proved
by the method of Cooper and Long in \citep{MR1431138} using the action
on the Bruhat\textendash Tits building. Therefore, for any integer
$n$ such that $\left|n\right|>1$, the HNN extension of $U$ by $\rho_{n}|_{U}$
is finitely generated, non-linear, and residually finite.

\noindent \end{remark}

The rest of this paper is organized as follows. In Section 2, we introduce
basic definitions and prove $\mathrm{Theorem\;1.2}$. We prove $\mathrm{Theorem\;1.3}$
in Section 3, and $\mathrm{Theorem\;1.5}$ in Section 4.

\noindent \begin{ack}

\noindent This paper is supported by National Research Foundation
of Korea (grant number 2020R1C1C1A01006819) and Samsung Science and
Technology Foundation (project number SSTF-BA2001-01). The author
thanks Sang-hyun Kim for valuable and encouraging comments.

\noindent \end{ack}

\section{Proof of Theorem 1.2}

Let us define $b_{0}:=dc^{-1}$ in $G$ and define a matrix $X\in\mathrm{GL}\left(3,\,\mathbb{F}_{2}\left(t\right)\right)$
by
\begin{align*}
X & :=\left(\begin{array}{ccc}
0 & 0 & 1\\
1+t^{-1} & 1+t & 1+t\\
1+t^{-1} & 1+t^{-1} & t^{-1}
\end{array}\right).
\end{align*}

A direct computation yields
\begin{align*}
 & Xb_{0}X^{-1}=\left(\begin{array}{ccc}
0 & 0 & 1\\
1 & 0 & 1\\
0 & 1 & 1
\end{array}\right),\\
 & Xd^{-1}X^{-1}=\left(\begin{array}{ccc}
1 & 0 & 0\\
1 & 1+t^{-1} & 1\\
1 & t^{-1} & 0
\end{array}\right),
\end{align*}
where $\left(\begin{array}{ccc}
0 & 0 & 1\\
1 & 0 & 1\\
0 & 1 & 1
\end{array}\right)$ (resp. $\left(\begin{array}{ccc}
1 & 0 & 0\\
1 & 1+t^{-1} & 1\\
1 & t^{-1} & 0
\end{array}\right)$) is equal to $t^{-1}x$ (resp. $t^{-4}yxy$) in $\mathrm{GL}\left(3,\,\mathbb{F}_{2}\left(t\right)\right)$
as in \citep[Appendix A]{lee2023characterizing}. Thus, we use the
results on $x$ and $yxy$ in $\mathrm{PGL}\left(3,\,\mathbb{F}_{2}\left(t\right)\right)$
in \citep[Appendix A]{lee2023characterizing} to compute the presentation
of $G$, generated by $c$ and $d$, via
\begin{align}
\mathrm{SL}\left(3,\,\mathbb{F}_{2}\left(t\right)\right) & \cong\mathrm{PSL}\left(3,\,\mathbb{F}_{2}\left(t\right)\right).
\end{align}

From now on, for a list $W$ of elements in a group $A$, we denote
by $\left\langle W\right\rangle _{A}$ (resp. $\left\langle \!\!\left\langle W\right\rangle \!\!\right\rangle _{A}$)
the subgroup generated (resp. normally generated) by the elements
in $W$. We omit the subscript $A$ when there is no room for confusion.
We use the commutator convention $\left[a,b\right]=a^{-1}b^{-1}ab$.

Define a subgroup $NC:=\left\langle d^{m}b_{0}d^{-m}\;:\;m\in\mathbb{Z}\right\rangle $
of $G$. Note that $G$ splits over $NC$ satisfying $G=NC\rtimes\left\langle d\right\rangle $
via the determinant map. In addition, for each nonzero integer $i$,
put
\begin{align*}
b_{i} & :=\left[b_{0},\,d^{i}\right].
\end{align*}
\begin{lemma}

The group $NC$ is presented by the generators $\left\{ b_{i}\::\:i\in\mathbb{Z}\right\} $
and the following relations:
\begin{lyxlist}{00.00.0000}
\item [{$\left.1_{b}\right)$}] \noindent $1=b_{i}^{4},\;i\in\mathbb{Z},$
\item [{$\left.2_{b}\right)$}] \noindent $\left[b_{0},b_{i}\right]=b_{i}^{2},\;i\ne0,$
\item [{$\left.3_{b}\right)$}] \noindent $\left[b_{i},b_{j}\right]=b_{i}^{2}b_{j-i}^{2}b_{j}^{2},\;i<j,\;i\ne0\ne j,$
\item [{$\left.4_{b}\right)$}] \noindent $b_{i}^{2}=b_{-i}^{2},\;i>0.$
\end{lyxlist}
\noindent \end{lemma}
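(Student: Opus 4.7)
The plan is to prove the lemma in two stages: first verify that the listed relations hold in the matrix group $NC$, and then show that the abstract group they present admits a normal form faithfully realised in $NC$, so no further relations are needed.

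For the first stage, I would begin by noting that $\{b_i : i \in \mathbb{Z}\}$ generates $NC$: rewriting $b_m = [b_0, d^m] = b_0^{-1}(d^{-m} b_0 d^m)$ yields $d^{-m} b_0 d^m = b_0 b_m$, so the original generators $d^m b_0 d^{-m}$ lie in $\langle b_i\rangle$, and conversely each $b_m$ is expressed in terms of $b_0$ and $d^{\pm m} b_0 d^{\mp m}$. To verify the four relations, I would exploit the setup preceding the lemma: conjugation by $X$ sends $b_0$ and $d^{-1}$ to $t^{-1}x$ and $t^{-4}yxy$ respectively, in the notation of \citep[Appendix A]{lee2023characterizing}. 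Via the isomorphism $\mathrm{SL}(3,\,\mathbb{F}_2(t)) \cong \mathrm{PSL}(3,\,\mathbb{F}_2(t))$---valid because the centre of $\mathrm{SL}(3,\,\mathbb{F}_2(t))$ consists of scalar cube roots of unity and in characteristic two no nontrivial such roots exist---each of the relations $1_b$--$4_b$ translates into an identity for the subgroup of $\mathrm{PGL}(3,\,\mathbb{F}_2(t))$ generated by $x$ and $yxy$, and these are precisely the identities already computed in the cited appendix.

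For the second stage, let $\widetilde{NC}$ denote the abstract group defined by the presentation and $\phi: \widetilde{NC} \twoheadrightarrow NC$ the canonical surjection. Relations $2_b$ and $3_b$ together imply that every commutator of two generators lies in the subgroup $Z \leq \widetilde{NC}$ generated by the squares $\{b_i^2 : i \geq 0\}$, where relation $4_b$ is used to identify the squares of opposite-index generators. A short computation using these same relations (together with $1_b$ to kill higher powers) shows that $Z$ is central in $\widetilde{NC}$, so $\widetilde{NC}$ is nilpotent of class at most $2$ and of exponent dividing $4$. Consequently every element admits a normal form $b_0^{\epsilon_0} \prod_{i \in I} b_i^{\epsilon_i} \cdot z$, with $\epsilon_0 \in \{0,1,2,3\}$, with $I$ a finite increasing subset of $\mathbb{Z} \setminus \{0\}$ and $\epsilon_i \in \{0,1\}$, and with $z$ a product of distinct squares.

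The main obstacle is uniqueness of this normal form, equivalently the injectivity of $\phi$. I would address it by tracking matrix entries: each $b_i$ is built from $b_0$ by an explicit $t \mapsto t^i$-type substitution combined with conjugation by a power of $d$, so the entries of a normal-form product differ, degree by degree in $t$, in coefficients that encode the exponent data $(\epsilon_0, (\epsilon_i)_{i \in I}, z)$. The same degree-based separation already underlies the $\mathrm{PGL}(3,\,\mathbb{F}_2(t))$ analysis of \citep[Appendix A]{lee2023characterizing}, and I would import (or re-do) that bookkeeping to conclude that distinct normal forms produce distinct matrices in $NC$, completing the injectivity argument and hence the proof of the lemma.
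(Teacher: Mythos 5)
Your architecture (verify the relations in the matrix group, then establish a normal form for the abstractly presented group and prove faithfulness of the evaluation map) is a legitimate strategy and genuinely different from the paper's. The paper never proves a normal form for all of $NC$ directly: it takes the presentation of the nonnegative-index subgroup $NC_{0}=\left\langle d^{m}b_{0}d^{-m}\;:\;m\le0\right\rangle =\left\langle b_{0},b_{1},b_{2},\dots\right\rangle $ as given by \citep[Lemma A.2]{lee2023characterizing}, writes $NC=\bigcup_{k\ge0}NC_{k}$ with $NC_{k}=d^{k}NC_{0}d^{-k}$, and then performs an inductive chain of Tietze transformations (Claims 2--8 of Appendix A) showing that each $NC_{k}$ is presented by $\left.1_{b,\,-k}\right)$--$\left.4_{b,\,-k}\right)$; in particular the relation $\left.4_{b}\right)$ is \emph{derived} in the course of this rewriting rather than checked by a separate matrix computation.

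The genuine gap in your proposal is the uniqueness of the normal form, which is the entire content of the lemma and which you defer to ``importing or re-doing the bookkeeping'' of the cited appendix. That bookkeeping covers only the subgroups $\left\langle b_{0},b_{1},\dots,b_{n}\right\rangle $ with nonnegative indices; extending it to the two-sided family is exactly where the difficulty lies, because the negative-index generators are not independent of the positive ones. The coincidence $b_{i}^{2}=b_{-i}^{2}$ holds in the matrix group, and consequently so do identities such as $\left[b_{-1},\,b_{1}\right]=b_{-1}^{2}b_{2}^{2}b_{1}^{2}=b_{2}^{2}$; any ``degree-by-degree separation'' of matrix entries must therefore be calibrated to show that these relations and their consequences are the \emph{only} coincidences among mixed-sign products, and nothing in your sketch indicates how the entries detect which central square a mixed-sign commutator equals. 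Your heuristic that $b_{i}$ arises from $b_{0}$ by a ``$t\mapsto t^{i}$-type substitution'' is also not accurate: $b_{i}=b_{0}^{-1}d^{-i}b_{0}d^{i}$ is a product of $b_{0}$ with a conjugate by $\mathrm{diag}\left(t^{i},1,1\right)$, not an application of $\rho_{i}$, so the degree bookkeeping cannot simply be transported. (A smaller, fixable slip: your normal form is not unique as stated, since $b_{0}^{2}$ is central and can be absorbed into $z$, so either $\epsilon_{0}$ must be restricted to $\left\{ 0,1\right\} $ or $b_{0}^{2}$ excluded from $z$.) Without an executed injectivity argument for mixed-sign products, the proof is incomplete precisely at the point where the paper expends all of its effort.
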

\begin{proof}

See Appendix A. $\qedhere$

\noindent \end{proof}

Put $E_{2^{i}}$ as the elementary abelian group of order $2^{i}$.
Given two positive integers $i\le j$, define a map $\mu_{ij}:E_{2^{i}}\to E_{2^{j}}$
carrying the $m$-th standard generator of $E_{2^{i}}$ to the $m$-th
standard generator of $E_{2^{j}}$. Then, the pair $\left\langle E_{2^{i}},\,\mu_{ij}\right\rangle $
is a direct system, and define $E_{2^{\infty}}$ to be the direct
limit. 

\noindent \begin{corollary}

The derived subgroup of $NC$ is isomorphic to $E_{2^{\infty}}$.
Moreover, via the abelianization the group $NC$ admits a following
short exact sequence: 
\begin{align*}
1 & \longrightarrow E_{2^{\infty}}\longrightarrow NC\longrightarrow\mathbb{Z}/4\mathbb{Z}\times E_{2^{\infty}}\longrightarrow1.
\end{align*}
\end{corollary}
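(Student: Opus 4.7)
The sequence in the statement is the abelianization sequence $1 \to [NC, NC] \to NC \to NC^{\mathrm{ab}} \to 1$, so the corollary amounts to identifying $[NC, NC] \cong E_{2^{\infty}}$ together with $NC^{\mathrm{ab}} \cong \mathbb{Z}/4\mathbb{Z} \times E_{2^{\infty}}$.

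The abelianization is immediate from Lemma 2.1. In the abelian quotient the commutators on the left sides of $2_b$ and $3_b$ vanish, so $2_b$ becomes $b_i^2 = 1$ for every $i \neq 0$, while $3_b$ and $4_b$ collapse to trivial identities; only $b_0^4 = 1$ survives from $1_b$ as a nontrivial constraint. Hence $NC^{\mathrm{ab}}$ is the free abelian group on $\{\bar b_i : i \in \mathbb{Z}\}$ modulo $4\bar b_0 = 0$ and $2\bar b_i = 0$ for $i \neq 0$, which is $\mathbb{Z}/4\mathbb{Z} \times E_{2^{\infty}}$.

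For the derived subgroup, set $Z := \langle b_k^2 : k > 0 \rangle_{NC}$. Relation $2_b$ gives $b_k^2 = [b_0, b_k] \in [NC, NC]$ for $k \neq 0$, and $4_b$ lets one restrict to $k > 0$, so $Z \subseteq [NC, NC]$. Conversely, $2_b$ and $3_b$ write every commutator of two generators as a product of terms $b_\ell^2$, so once $Z$ is known to be normal in $NC$ the reverse inclusion $[NC, NC] \subseteq Z$ also follows. For normality, conjugation by $b_0$ is controlled by $2_b$, which unwinds to $b_0 b_k b_0^{-1} = b_k^{-1}$ and hence $b_0 b_k^2 b_0^{-1} = b_k^{-2} = b_k^2$. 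For conjugation by $b_j$ with $j \neq 0$, I would derive from $3_b$ an explicit formula $b_j b_k b_j^{-1} = b_k^{-1} \cdot w$ with $w$ a word in squares $b_\ell^2$, and then square to land $b_j b_k^2 b_j^{-1}$ inside $Z$; a careful induction (say on $|j| + |k|$) then refines this to the equality $b_j b_k^2 b_j^{-1} = b_k^2$, so that $Z$ is in fact central in $NC$.

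With centrality in hand, $Z$ is abelian and each generator has order dividing $2$ by $1_b$, so $Z$ is a countably generated elementary abelian $2$-group and is a quotient of $E_{2^{\infty}}$. To see that the natural surjection $E_{2^{\infty}} \twoheadrightarrow Z$ is injective, I would pass to the ambient group $\mathrm{GL}(3, \mathbb{F}_2[t, t^{-1}])$ and use the explicit polynomial entries of $b_k$ and $b_k^2$ to show that no finite $\mathbb{F}_2$-product $\prod_s b_{k_s}^2$ over distinct indices equals the identity matrix. The main obstacle is the centrality step: a head-on manipulation from $3_b$ is prone to circularity, since conjugating one square by $b_j$ produces further squares whose commutation with $b_j$ is precisely what one is trying to establish. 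The induction therefore has to be arranged so that at each stage one invokes only centrality already proved for smaller indices; alternatively, centrality can be checked directly by computing the relevant conjugations in the matrix realization.
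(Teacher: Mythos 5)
Your proposal follows the paper's own route: both inclusions between the derived subgroup and $Z=\langle b_k^2 : k\ge 1\rangle$ come from relation $2_b$ together with the observation that $NC/Z$ is abelian once $Z$ is normal, and the abelianization is read off the presentation exactly as you do. The one step you only sketch --- centrality of the squares $b_k^2$ in $NC$, which gives normality of $Z$ and lets the commutator formulas close up without circularity --- is supplied verbatim by the paper's Lemma A.1 in Appendix A (the nonnegative-index cases are imported from a companion paper and the mixed-sign cases are a short direct computation from $1_b$--$4_b$), and the independence of the $b_k^2$ needed to conclude $Z\cong E_{2^\infty}$ rather than a proper quotient is likewise absorbed into those cited structure results, so your two fallback plans (a carefully graded induction, or a direct check in the matrix realization of $NC$) are both viable but are not needed given the appendix.
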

\begin{proof}

According to Lemma A.1 (see Appendix A), the center of $NC$ contains
\[
\left\langle b_{i}^{2}\;:\;i\in\mathbb{Z}\right\rangle _{NC}=\left\langle b_{i}^{2}\;:\;i\ge0\right\rangle _{NC}\simeq E_{2^{\infty}}.
\]

Furthermore, due to the relation $\left.2_{b}\right)$, every $b_{i}^{2}$
for $i\ge1$ is represented by a commutator $\left[b_{0},b_{i}\right]$.
Thus, the derived subgroup of $NC$ contains a normal subgroup
\[
\left\langle b_{i}^{2}\;:\;i\ge1\right\rangle _{NC}\simeq E_{2^{\infty}}.
\]

Given the presentation of $NC$ provided by Lemma 2.1, the quotient
group $NC/\left\langle b_{i}^{2}\;:\;i\ge1\right\rangle _{NC}$ is
generated by the image of $b_{i}$ for $i\in\mathbb{Z}$, and is abelian,
as indicated by $\left.2_{b}\right)$ and $\left.3_{b}\right)$ in
$\mathrm{Lemma}\;2.1$. Consequently, $\left\langle b_{i}^{2}\;:\;i\ge1\right\rangle _{NC}$
also contains the derived subgroup. $\qedhere$

\noindent \end{proof}
\begin{corollary}

The group $G$ is solvable of class 3.

\noindent \end{corollary}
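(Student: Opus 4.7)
My approach is to sandwich the derived length of $G$ between $3$ and $3$ using the semidirect product decomposition $G=NC\rtimes\langle d\rangle$ together with the structural information on $NC$ provided by Corollary 2.2.

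For the upper bound $G^{(3)}=1$: since $\langle d\rangle\cong\mathbb{Z}$ is abelian, the quotient $G/NC$ is abelian, so $G'\subseteq NC$. Consequently $G''\subseteq [NC,NC]=NC'$. By Corollary 2.2 the derived subgroup $NC'\cong E_{2^{\infty}}$ is abelian, so $G''$ is abelian and therefore $G^{(3)}=[G'',G'']=1$.

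For the lower bound $G''\neq 1$: I would exhibit a nontrivial commutator of two elements already known to lie in $G'$. The natural witnesses are the $b_{i}$ for $i\neq 0$, since each $b_{i}=[b_{0},d^{i}]$ is by definition a single commutator in $G$ and therefore belongs to $G'$. Applying relation $3_{b})$ of Lemma 2.1 with $i=1,\,j=2$, together with $1_{b})$, gives
\[
[b_{1},b_{2}]\;=\;b_{1}^{2}\,b_{1}^{2}\,b_{2}^{2}\;=\;b_{1}^{4}\,b_{2}^{2}\;=\;b_{2}^{2},
\]
which is a nontrivial element of $G''$, provided $b_{2}^{2}\neq 1$. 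This nonvanishing is exactly what the proof of Corollary 2.2 already furnishes, since $\langle b_{i}^{2}\;:\;i\ge 1\rangle\cong E_{2^{\infty}}$ forces each $b_{i}^{2}$ with $i\ge 1$ to be a nontrivial generator of an elementary abelian $2$-group.

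\textbf{Main obstacle.} There is no serious obstacle once Lemma 2.1 and Corollary 2.2 are in hand; the whole argument reduces to a two-line symbolic manipulation with the presentation. The only subtlety worth flagging is that the specific element $b_{2}^{2}$ chosen as a witness must be genuinely nontrivial, which requires invoking the explicit $E_{2^{\infty}}$-isomorphism in Corollary 2.2 rather than merely the abstract statement $NC'\neq 1$. Since the derived length is at least $3$ and at most $3$, the class is exactly $3$.
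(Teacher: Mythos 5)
Your proof is correct and takes essentially the same route as the paper, which likewise just combines the splitting $G=NC\rtimes\left\langle d\right\rangle$ with the short exact sequence of Corollary 2.2. You additionally supply an explicit witness $[b_{1},b_{2}]=b_{2}^{2}\ne1$ in $G''$ for the lower bound on the derived length, a detail the paper's one-line proof leaves implicit.
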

\begin{proof}

Combining $G=NC\rtimes\left\langle d\right\rangle $ with the short
exact sequence on $NC$ in Corollary 2.2, we conclude that $G$ is
solvable of class 3. $\qedhere$

\noindent \end{proof}

Let us consider $H_{n}$. Put $t$ as the additional generator of
$H_{n}$ associated with the HNN extension of $G$. Define subgroups
$\widetilde{NC}_{n}$ and $\widetilde{G}_{n}$ of $H_{n}$ by
\begin{align*}
\widetilde{NC}_{n} & :=\bigcup_{i=0}^{\infty}t^{-i}NCt^{i},\;\widetilde{G}_{n}:=\bigcup_{i=0}^{\infty}t^{-i}Gt^{i}.
\end{align*}

Observe that $\det\left(c\right)=t=\det\left(d\right)$ by direct
computation. Because $NC=\ker\left(\det|_{G}\right)$ and a word $g\left(c,d\right)$
in $c,d$ represents an element of $G$, $\rho_{n}$ induces an endomorphism
on $NC$. Therefore, whenever $i_{1}<i_{2}$ we have
\begin{align*}
\left(t^{-i_{1}}NCt^{i_{1}}\right) & \subset\left(t^{-i_{2}}NCt^{i_{2}}\right),\;\left(t^{-i_{1}}Gt^{i_{1}}\right)\subset\left(t^{-i_{2}}Gt^{i_{2}}\right).
\end{align*}
\begin{lemma}

The group $\widetilde{NC}_{n}$ is nilpotent of class 2 for every
nonzero integer $n$.

\noindent \end{lemma}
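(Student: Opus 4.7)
The plan is to present $\widetilde{NC}_n$ as an ascending union of conjugates of $NC$, each of nilpotency class exactly $2$, and then conclude by a short elementwise argument. First, I would establish that $NC$ itself is nilpotent of class exactly $2$. The proof of Corollary 2.2 already contains both halves of what is needed: on the one hand it appeals to Lemma A.1 to assert that $\langle b_i^2 : i \in \mathbb{Z}\rangle_{NC}$ lies in the center of $NC$, and on the other hand it shows that this same subgroup coincides with the derived subgroup $[NC,NC] \simeq E_{2^{\infty}}$. Thus $[NC,NC] \subseteq Z(NC)$, and since $NC$ is non-abelian, the nilpotency class is exactly $2$.

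Next, I would observe that the chain $NC \subseteq t^{-1} NC t \subseteq t^{-2} NC t^2 \subseteq \cdots$ is increasing, as already noted in the paragraph preceding the statement. (Here $\rho_n$ preserves $NC = \ker(\det|_G)$ because $\det \rho_n(g)$ is obtained from $\det g$ by the substitution $t \mapsto t^n$, so $\det g = 1$ forces $\det \rho_n(g) = 1$.) For each $i \ge 0$, conjugation by $t^{-i}$ inside $H_n$ restricts to a group isomorphism $NC \to t^{-i} NC t^i$, so every member of the chain is nilpotent of class $2$ as well; in particular, derived subgroup is central in each member.

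Finally, given any three elements $x,y,z \in \widetilde{NC}_n$, each lies in some $t^{-i} NC t^i$, and taking the maximum of the three indices places all of them in a common class-$2$ subgroup. Hence $[[x,y],z] = 1$ in that subgroup, and therefore in $\widetilde{NC}_n$. This proves $\widetilde{NC}_n$ is nilpotent of class at most $2$; since it contains the non-abelian group $NC$, the class is exactly $2$.

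I do not anticipate any substantial obstacle: the lemma is essentially a direct-limit transfer of class-$2$ nilpotency from $NC$ to its ascending union. The only delicate ingredient is the centrality of $[NC,NC]$ inside $NC$; if this is not explicitly extracted from Lemma A.1, it can be verified directly from the relations in Lemma 2.1 by noting that $2_b)$ forces $b_0 b_i b_0^{-1} = b_i^{-1}$ for $i \ne 0$ (using $b_i^4=1$), so $b_0$ commutes with every $b_i^2$, and relation $3_b)$ (combined with $1_b)$ and $4_b)$) similarly shows each $b_i^2$ is centralized by every generator.
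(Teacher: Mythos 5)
Your proof is correct, but it takes a genuinely more economical route than the paper. You reduce everything to the single fact that nilpotency of class at most $2$ is a ``local'' property -- equivalent to the vanishing of all triple commutators $[[x,y],z]$, hence inherited by a directed union of class-$2$ subgroups -- and then you only need that each $t^{-i}NCt^{i}$ is a conjugate, hence isomorphic, copy of $NC$ and that the chain is increasing. The one genuinely nontrivial input, that $NC$ itself has class exactly $2$, you correctly extract from Corollary 2.2 together with Lemma A.1 (derived subgroup $\langle b_i^2 : i\ge 1\rangle \simeq E_{2^\infty}$ sits inside the center), and your fallback verification from relations $1_b)$--$4_b)$ is also sound. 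The paper instead works with the explicit generating set $a_{i,j}=t^{-i}b_jt^{i}$: it shows the squares $a_{i,j}^2$ are central in $\widetilde{NC}_n$ (using that the square of \emph{every} element of $NC$ is central, so that centrality survives passage to deeper levels of the chain), identifies $\langle a_{i,j}^2 : j\ge 1,\, i\ge 0\rangle$ as exactly the derived subgroup by checking that the corresponding quotient $\widetilde{Q}_n$ is abelian, and concludes class $2$ from derived-subgroup-inside-center. The payoff of the paper's longer computation is an explicit description of $(\widetilde{NC}_n)'$ and of a central subgroup containing it; your argument forgoes that description but still delivers everything Theorem 1.2 actually uses (namely that $(\widetilde{NC}_n)'$ is abelian), so it would serve as a valid drop-in replacement.
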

\begin{proof}

By the presentation in Lemma 2.1, the quotient group $NC/\left\langle b_{i}^{2}\;:\;i\ge0\right\rangle _{NC}$
is abelian and isomorphic to $E_{2^{\infty}}$. Therefore, for any
element $g\in NC$, $\left\langle b_{i}^{2}\;:\;i\ge0\right\rangle _{NC}$
includes $g^{2}$.

According to Lemma 2.1, $\widetilde{NC}_{n}$ is generated by $\left\{ t^{-i}b_{j}t^{i}\::\;j\in\mathbb{Z},\:i\ge0\right\} $.
Let us define
\begin{align*}
a_{i,j} & :=t^{-i}b_{j}t^{i}.
\end{align*}

Since $t^{-i}NCt^{i}$ is isomorphic to $NC$, the center of $\widetilde{NC}_{n}$
contains
\begin{align*}
\left\langle a_{i,j}^{2}\;:\;j\ge0,\:i\ge0\right\rangle _{\widetilde{NC}_{n}}.
\end{align*}

As in the proof of Corollary 2.2, the derived subgroup of $\widetilde{NC}_{n}$
contains $\left\langle a_{i,j}^{2}\;:\;j\ge1,\:i\ge0\right\rangle _{\widetilde{NC}_{n}}$,
since $a_{i,j}^{2}=\left[a_{i,0},\,a_{i,j}\right]$ from the relation
$\left.2_{b}\right)$ in Lemma 2.1. Define $\widetilde{Q}_{n}$ by
\begin{align*}
\widetilde{Q}_{n} & :=\widetilde{NC}_{n}/\left\langle a_{i,j}^{2}\;:\;j\ge1,\:i\ge0\right\rangle _{\widetilde{NC}_{n}},
\end{align*}
and define $q_{n}:\widetilde{NC}_{n}\to\widetilde{Q}_{n}$ to be the
quotient map.

Choose two images of generators $q_{n}\left(a_{i_{1},j_{1}}\right)$
and $q_{n}\left(a_{i_{2},j_{2}}\right)$ in $\widetilde{Q}_{n}$.
Without loss of generality, suppose $i_{2}\le i_{1}$. Since $a_{i_{2},j_{2}}\in t^{-i_{1}}NCt^{i_{1}}$
and $t^{-i_{1}}NCt^{i_{1}}\cong NC$, from the short exact sequence
of Corollary 2.2, the images of $a_{i_{1},j_{1}}$ and $a_{i_{2},j_{2}}$
commute in the quotient group
\begin{align*}
t^{-i_{1}}NCt^{i_{1}}/\left\langle a_{i_{1},j}^{2}\;:\;j\ge1\right\rangle _{t^{-i_{1}}NCt^{i_{1}}} & =t^{-i_{1}}NCt^{i_{1}}/\left\langle a_{i,j}^{2}\;:\;j\ge1,\:i_{1}\ge i\ge0\right\rangle _{t^{-i_{1}}NCt^{i_{1}}}.
\end{align*}

In other words, the subgroup $\left\langle a_{i_{1},j}^{2}\;:\;j\ge1\right\rangle _{\widetilde{NC}_{n}}$
includes the commutator $\left[a_{i_{1},j_{1}},a_{i_{2},j_{2}}\right]$.
Thus, $q_{n}\left(a_{i_{1},j_{1}}\right)$ and $q_{n}\left(a_{i_{2},j_{2}}\right)$
commute in $\widetilde{Q}_{n}$, which implies that $\widetilde{Q}_{n}$
is abelian. We conclude that $\left\langle a_{i,j}^{2}\;:\;j\ge1,\:i\ge0\right\rangle _{\widetilde{NC}_{n}}$
is the derived subgroup of $\widetilde{NC}_{n}$. Because the center
contains the derived subgroup, $\widetilde{NC}_{n}$ is nilpotent
of class 2. $\qedhere$

\noindent \end{proof}
\begin{lemma}

The group $\widetilde{G}_{n}$ has a faithful linear representation
into $\mathrm{GL}\left(3,\,\mathcal{P}\left(\mathbb{F}_{2}\right)\right)$,
where $\mathcal{P}\left(\mathbb{F}_{2}\right)$ is the field of Puiseux
series with coefficients in $\mathbb{F}_{2}$. Moreover, via the determinant
map, $\widetilde{G}_{n}$ admits the following split short exact sequence:
\begin{align*}
1 & \longrightarrow\widetilde{NC}_{n}\longrightarrow\widetilde{G}_{n}\longrightarrow\mathbb{Z}\left[\frac{1}{n}\right]\longrightarrow1,
\end{align*}
where $\mathbb{Z}\left[\frac{1}{n}\right]$ is the additive group
of the ring of $S$-integers.

\noindent \end{lemma}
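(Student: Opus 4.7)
The plan is to build $\phi : \widetilde{G}_{n} \to \mathrm{GL}(3, \mathcal{P}(\mathbb{F}_{2}))$ by inverting, inside the Puiseux field, the substitution $t \mapsto t^{n}$ that defines $\rho_{n}$. Concretely, for $g = g(t) \in G \subset \mathrm{GL}(3, \mathbb{F}_{2}[t, t^{-1}])$ and $i \ge 0$, I would set $\phi(t^{-i} g t^{i}) := g(t^{1/n^{i}})$, viewed in $\mathrm{GL}(3, \mathbb{F}_{2}[t^{1/n^{i}}, t^{-1/n^{i}}]) \subset \mathrm{GL}(3, \mathcal{P}(\mathbb{F}_{2}))$. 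The HNN relation $t g t^{-1} = \rho_{n}(g)$ produces the redundancy $t^{-i} g t^{i} = t^{-(i+1)} \rho_{n}(g) t^{i+1}$ in $\widetilde{G}_{n}$, and both sides are sent by $\phi$ to the same matrix $g(t^{1/n^{i}})$ because $\rho_{n}(g)(t^{1/n^{i+1}}) = g(t^{n/n^{i+1}}) = g(t^{1/n^{i}})$; this yields well-definedness.

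Next I would check that $\phi$ is a homomorphism and is injective. For the homomorphism property, I would bring $t^{-i} g_{1} t^{i}$ and $t^{-j} g_{2} t^{j}$ to a common exponent $k = \max(i, j)$ using the identity above, reducing the claim to the fact that the substitution $t \mapsto t^{1/n^{k}}$ is a ring homomorphism on $\mathbb{F}_{2}[t, t^{-1}]$. Injectivity follows because this same substitution is in fact an \emph{injective} ring map, since $t^{1/n^{k}}$ is transcendental over $\mathbb{F}_{2}$ inside $\mathcal{P}(\mathbb{F}_{2})$; therefore $\phi(t^{-i} g t^{i}) = I$ forces $g = I$ in $G$ and hence the original element is trivial in $\widetilde{G}_{n}$.

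For the short exact sequence I would apply the determinant to the image. Since $\det(c) = \det(d) = t$, every $g \in G$ satisfies $\det(g) = t^{k}$ for some $k \in \mathbb{Z}$, whence $\det(\phi(t^{-i} g t^{i})) = t^{k/n^{i}}$; letting $(k, i)$ vary, the image is exactly $\{t^{r} : r \in \mathbb{Z}[1/n]\}$, naturally isomorphic to the additive group $\mathbb{Z}[1/n]$. The kernel consists of those $t^{-i} g t^{i}$ with $\det(g) = 1$, i.e.\ $g \in \ker(\det|_{G}) = NC$, which is precisely $\widetilde{NC}_{n}$. To split the sequence I would define $\sigma(k/n^{i}) := t^{-i} d^{k} t^{i}$; well-definedness follows from $\rho_{n}(d^{k}) = d^{kn}$ combined with $t^{-i} d^{k} t^{i} = t^{-(i+1)} d^{kn} t^{i+1}$, and composing with $\phi$ sends $\sigma(k/n^{i})$ to $\mathrm{diag}(t^{k/n^{i}}, 1, 1)$, whose manifest multiplicativity together with the injectivity of $\phi$ shows that $\sigma$ is a homomorphism splitting the determinant map. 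The main obstacle is mainly bookkeeping: keeping separate the stable letter $t$ of $H_{n}$ and the formal variable $t$ inside matrix entries, and verifying that the explicit formula for $\phi$ is compatible with the HNN relation on every $t^{-i} G t^{i}$; once this is in place, the rest of the statement is formal.
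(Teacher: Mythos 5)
Your proposal is correct and follows essentially the same route as the paper: both define the representation on $t^{-i}Gt^{i}$ by the substitution $t\mapsto t^{1/n^{i}}$ into the Puiseux field, verify compatibility across the ascending union via $\rho_{n^{-i}}=\rho_{n^{-j}}\circ\rho_{n^{j-i}}$, and then read off the kernel and image of the determinant together with the splitting by the diagonal matrices $\mathrm{diag}(t^{k/n^{i}},1,1)$. Your explicit injectivity argument (transcendence of $t^{1/n^{k}}$) just spells out what the paper summarizes as ``faithful by construction.''
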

\begin{proof}

Suppose the indeterminate of $\mathcal{P}\left(\mathbb{F}_{2}\right)$
is $t$. Denote by $\mathbb{F}_{2}\left[\mathcal{P}\right]$ the subring
of $\mathcal{P}\left(\mathbb{F}_{2}\right)$ of series of finite length.
For any coprime pair of integers $\left(a,b\right)$ such that $a\ne0\ne b$,
abusing notation, define $\rho_{\frac{a}{b}}:\mathrm{GL}\left(3,\,\mathbb{F}_{2}\left[\mathcal{P}\right]\right)\to\mathrm{GL}\left(3,\,\mathbb{F}_{2}\left[\mathcal{P}\right]\right)$
by

\noindent 
\begin{align*}
\rho_{\frac{a}{b}}\left(\begin{array}{ccc}
a_{11}\left(t\right) & a_{12}\left(t\right) & a_{13}\left(t\right)\\
a_{21}\left(t\right) & a_{22}\left(t\right) & a_{23}\left(t\right)\\
a_{31}\left(t\right) & a_{32}\left(t\right) & a_{33}\left(t\right)
\end{array}\right) & :=\left(\begin{array}{ccc}
a_{11}\left(t^{\frac{a}{b}}\right) & a_{12}\left(t^{\frac{a}{b}}\right) & a_{13}\left(t^{\frac{a}{b}}\right)\\
a_{21}\left(t^{\frac{a}{b}}\right) & a_{22}\left(t^{\frac{a}{b}}\right) & a_{23}\left(t^{\frac{a}{b}}\right)\\
a_{31}\left(t^{\frac{a}{b}}\right) & a_{32}\left(t^{\frac{a}{b}}\right) & a_{33}\left(t^{\frac{a}{b}}\right)
\end{array}\right).
\end{align*}

Then, $\rho_{\frac{a}{b}}$ is an automorphism of $\mathrm{GL}\left(3,\,\mathbb{F}_{2}\left[\mathcal{P}\right]\right)$.
For each integer $i\ge0$, define a representation $\mathrm{rep}_{i}:t^{-i}Gt^{i}\to\mathrm{GL}\left(3,\,\mathbb{F}_{2}\left[\mathcal{P}\right]\right)$
as for each $g\in G$,
\begin{align*}
\mathrm{rep}_{i}\left(t^{-i}gt^{i}\right) & :=\rho_{n^{-i}}\left(g\right).
\end{align*}

We see each $\mathrm{rep}_{i}$ is faithful by construction. Moreover,
for any pair of integers $\left(i,j\right)$ such that $i\le j$,
$\mathrm{rep}_{i}$ and $\mathrm{rep}_{j}$ have the same image of
each element in $t^{-i}Gt^{i}$. Indeed, for each $g\in G,$
\begin{align*}
\mathrm{rep}_{i}\left(t^{-i}gt^{i}\right) & =\rho_{n^{-i}}\left(g\right)=\rho_{n^{-j}}\left(\rho_{n^{j-i}}\left(g\right)\right)=\rho_{n^{-j}}\left(t^{j-i}gt^{i-j}\right)=\mathrm{rep}_{j}\left(t^{-i}gt^{i}\right).
\end{align*}

Define a representation $\mathrm{rep}:\widetilde{G}_{n}\to\mathrm{GL}\left(3,\,\mathbb{F}_{2}\left[\mathcal{P}\right]\right)$
as follows: for each $g\in G$ and $i\ge0$,
\begin{align*}
\mathrm{rep}\left(t^{-i}gt^{i}\right) & :=\mathrm{rep}_{i}\left(t^{-i}gt^{i}\right).
\end{align*}

Then, it is faithful, as every $\mathrm{rep}_{i}$ is faithful. By
construction, we have
\begin{align*}
\det\left(\mathrm{rep}\left(\widetilde{G}_{n}\right)\right) & =\left\{ t^{z}\;:\;z\in\mathbb{Z}\left[\frac{1}{n}\right]\right\} .
\end{align*}

Since $\ker\left(\det|_{G}\right)=NC$, we also have
\begin{align*}
\ker\left(\det\circ\mathrm{rep}_{i}\right) & =t^{-i}NCt^{i},
\end{align*}
which implies
\begin{align*}
\ker\left(\det\circ\mathrm{rep}\right) & =\widetilde{NC}_{n},
\end{align*}
and we obtain the short exact sequence desired. Finally, the sequence
splits, as for each nonnegative integer $i$,

\[
\mathrm{rep}\left(t^{-i}dt^{i}\right)=\left(\begin{array}{ccc}
t^{n^{-i}}\\
 & 1\\
 &  & 1
\end{array}\right).\;\qedhere
\]
\end{proof}

\noindent \begin{proofthm12}

By definition, we have
\begin{align*}
\widetilde{G}_{n} & =\left\langle t^{-i}dt^{i},\,t^{-i}b_{0}t^{i}\;:\;i\ge0\right\rangle _{H_{n}}=\left\langle \!\!\left\langle c,\,d\right\rangle \!\!\right\rangle _{H_{n}},
\end{align*}
which implies $H_{n}=\widetilde{G}_{n}\rtimes\left\langle t\right\rangle _{H_{n}}$.
Therefore, from Lemma 2.4 and Lemma 2.5, $H_{n}$ has a subnormal
series:
\begin{align*}
1 & \trianglelefteq\left(\widetilde{NC}_{n}\right)'\trianglelefteq\widetilde{NC}_{n}\trianglelefteq\widetilde{G}_{n}\trianglelefteq H_{n},
\end{align*}
where $\left(\widetilde{NC}_{n}\right)'$ is the derived subgroup
of $\widetilde{NC}_{n}$. Lemma 2.4, and Lemma 2.5 also guarantee
that each factor is abelian. $\qed$

\noindent \end{proofthm12}

\section{Proof of Theorem 1.3}

We start this section with an implicit result in the proof of Theorem
1.1 by Wehrfritz \citep[Corollary 2.4]{MR0367080}.

\noindent \begin{theorem}

\noindent Suppose a group $H$ is generated by two non-torsion elements
$a$ and $b$, and there exists an isomorphism $\iota:H\to H'$ such
that $H'\subset H$ and for an integer $\left|r\right|>1$,
\begin{align*}
\iota\left(a\right) & =a^{r},\;\iota\left(b\right)=b^{r}.
\end{align*}

Then, if the HNN extension $H*_{\iota}$ is linear, there exists a
nonzero integer $s$ such that $\left\langle a^{s},b^{s}\right\rangle $
is nilpotent.

\noindent \end{theorem}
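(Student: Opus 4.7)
My plan is to transfer the dynamical constraint imposed by the stable letter of the HNN extension into a Lie-theoretic rigidity statement, following Wehrfritz's method. Let $h$ denote the stable letter of $H\ast_\iota$, so that $hah^{-1}=a^r$ and $hbh^{-1}=b^r$. Suppose $H\ast_\iota$ embeds faithfully into $\mathrm{GL}(n,F)$, which after extending scalars we may take to be algebraically closed.

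First I would run the standard eigenvalue argument on $a$ and $b$ individually. Since $a$ and $a^r$ are conjugate in $\mathrm{GL}(n,F)$ they share their eigenvalue multisets, so the finite set of eigenvalues of $a$ is closed under $\lambda\mapsto\lambda^r$. Iterating, each eigenvalue satisfies $\lambda^{r^k}=\lambda^{r^m}$ for some $k<m$ and is therefore a root of unity; the same holds for $b$. Hence there is a positive integer $s$ for which $a^s$ and $b^s$ are unipotent. I would next rule out positive characteristic: over a field of characteristic $p$ every unipotent matrix has order a power of $p$, so $a^s$ would be torsion, forcing $a$ to be torsion and contradicting the hypothesis. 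Hence $\mathrm{char}\,F=0$.

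Working over characteristic zero I would use the exponential--logarithm bijection between nilpotent and unipotent matrices to write $a^s=\exp(A)$ and $b^s=\exp(B)$ with $A,B$ nilpotent. Since $\exp$ is injective on nilpotent matrices and $h\exp(A)h^{-1}=(a^s)^r=\exp(rA)$, we obtain $hAh^{-1}=rA$, and likewise $hBh^{-1}=rB$. Let $\mathfrak{g}\subset\mathfrak{gl}(n,F)$ be the Lie subalgebra generated by $A$ and $B$. Then $\mathrm{Ad}(h)$ restricts to a Lie-algebra automorphism of $\mathfrak{g}$, and by induction on depth, every iterated Lie bracket of length $k$ in $A$ and $B$ is an eigenvector of $\mathrm{Ad}(h)$ with eigenvalue $r^k$. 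The hypothesis $|r|>1$ makes the scalars $r,r^2,r^3,\ldots$ pairwise distinct, so the span $\mathfrak{g}_k$ of depth-$k$ brackets sits in distinct $\mathrm{Ad}(h)$-eigenspaces for different $k$. Finite-dimensionality of $\mathfrak{g}$ then forces $\mathfrak{g}_k=0$ for large $k$; since the lower central series satisfies $\mathfrak{g}^{(m)}\subset\sum_{k\geq m}\mathfrak{g}_k$, it terminates and $\mathfrak{g}$ is a nilpotent Lie algebra.

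Finally, because $\mathfrak{g}$ is a nilpotent subalgebra of $\mathfrak{gl}(n,F)$ in characteristic zero, the Baker--Campbell--Hausdorff formula truncates and $\exp(\mathfrak{g})$ is a nilpotent subgroup of $\mathrm{GL}(n,F)$ containing both $a^s$ and $b^s$; thus $\langle a^s,b^s\rangle\subseteq\exp(\mathfrak{g})$ is nilpotent. The step I expect to demand the most care is the depth-graded eigenspace decomposition of $\mathfrak{g}$: one has to verify inductively that iterated brackets really do separate by depth under $\mathrm{Ad}(h)$, which hinges on the distinctness of the powers $r^k$ and thus on the hypothesis $|r|>1$. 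The characteristic-zero reduction is the other pivotal move, since it is what turns the multiplicative relation $hah^{-1}=a^r$ into the additive eigenvalue relation $hAh^{-1}=rA$ that powers the rest of the argument.
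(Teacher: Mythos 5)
Your argument is correct in outline and is genuinely Wehrfritz's method, but it differs from the paper's proof at both of its two nontrivial steps. For the unipotence of a common power, the paper does not use the eigenvalue/root-of-unity argument: it observes that $\left\langle a,t\right\rangle$ and $\left\langle b,t\right\rangle$ are copies of the solvable group $BS\left(1,r\right)$ and invokes Mal'cev's theorem that solvable linear subgroups of a fixed linear group are triangularizable up to a uniformly bounded index $m$, which produces the explicit unipotent elements $\left[a^{m},t^{-m}\right]=a^{m\left(r^{m}-1\right)}$ and $\left[b^{m},t^{-m}\right]=b^{m\left(r^{m}-1\right)}$ and hence the explicit exponent $s=m\left(r^{m}-1\right)$. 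Your spectral argument (eigenvalues permuted by $\lambda\mapsto\lambda^{r}$, hence roots of unity, hence some power unipotent) is more elementary and equally valid, though it yields a less explicit $s$. The characteristic-zero reduction is identical in both proofs. For the final step the paper simply cites Wehrfritz's Lemma 2.3 as a black box, whereas you reprove it via $\log$/$\exp$ and the $\mathrm{Ad}(h)$-grading of the Lie algebra $\mathfrak{g}=\left\langle A,B\right\rangle$ by eigenvalues $r^{k}$; that grading argument is sound and is what buys self-containedness.

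One step at the end needs more care than you give it: from ``$\mathfrak{g}$ is a nilpotent Lie algebra'' you cannot immediately write $\exp(\mathfrak{g})$, since $\exp$ is only algebraically defined on nilpotent matrices and a nilpotent Lie subalgebra of $\mathfrak{gl}(n,F)$ need not consist of nilpotent matrices (a line of diagonal matrices is already a counterexample). Here the grading rescues you: every homogeneous element $X\in\mathfrak{g}_{k}$ with $k\ge1$ satisfies $hXh^{-1}=r^{k}X$, so its eigenvalue multiset is stable under multiplication by $r^{k}\ne1$ and must be $\left\{0\right\}$; better still, the associative algebra generated by $\mathfrak{g}$ inherits the same positive grading by $\mathrm{Ad}(h)$-eigenvalues and is therefore nilpotent, so every element of $\mathfrak{g}$ is a nilpotent matrix and Engel's theorem puts $\mathfrak{g}$, hence $a^{s}=\exp(A)$ and $b^{s}=\exp(B)$, inside a conjugate of the strictly upper triangular unipotent group, which is nilpotent. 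With that repair (or by citing Wehrfritz's Lemma 2.3 as the paper does) your proof is complete.
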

\begin{proof}

We reproduce a corresponding portion of the proof of \citep[Corollary 2.4]{MR0367080}.
Put $t$ as the additional generator of $H*_{\iota}$ associated with
the HNN extension of $H$. Suppose that $H*_{\iota}$ is linear over
a field $F$, and let us identify $H*_{\iota}$ with the image of
the representation. We observe that the subgroups $\left\langle a,t\right\rangle $
and $\left\langle b,t\right\rangle $ are isomorphic to the Baumslag\textendash Solitar
group $BS\left(1,r\right)$, which is solvable.

According to a theorem of Mal'cev \citep[Theorem 3.6]{MR0335656},
there exists an integer $m>0$ such that every solvable linear subgroup
of $H*_{\iota}$ has a triangularizable normal subgroup of a finite
index dividing $m$. Therefore, two elements of $H*_{\iota}$,
\begin{align*}
\left[a^{m},\,t^{-m}\right] & =a^{m\left(r^{m}-1\right)},\;\left[b^{m},\,t^{-m}\right]=b^{m\left(r^{m}-1\right)},
\end{align*}
are unipotent. Choose $s=m\left(r^{m}-1\right)$. By the assumption,
$a^{s}$ is not a torsion element. Moreover, as $a^{s}$ is also unipotent,
the field $F$ must have characteristic 0. By applying \citep[Lemma 2.3]{MR0367080},
we conclude that the subgroup $\left\langle a^{s},b^{s}\right\rangle $
of $H*_{\iota}$ is nilpotent. $\qedhere$

\noindent \end{proof}
\begin{lemma}The group $G$ is presented by the generators $c,d$ and the following
relations:
\begin{lyxlist}{00.00.0000}
\item [{$\left.1_{x}\right)$}] \noindent $1=x_{i}^{4},$ for each integer
$i\ge0,$
\item [{$\left.2_{x}\right)$}] \noindent $1=\left[x_{i}^{2},\,d^{-1}\right],$
for each integer $i\ge0,$
\end{lyxlist}
\noindent where $x_{i}$ is a word in $c$ and $d$, recursively defined
by
\begin{align*}
x_{0} & :=b_{0}=dc^{-1},\;x_{i+1}:=\left[d^{-1},\,x_{i}\right],\;i\ge0.
\end{align*}
\end{lemma}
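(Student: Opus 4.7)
My plan is to compare the claimed presentation with that of $NC$ from Lemma 2.1, using the semidirect decomposition $G = NC \rtimes \langle d\rangle$. Let $\hat G$ denote the group defined by the given presentation. The natural map $\pi : \hat G \to G$ sending $c \mapsto c$ and $d \mapsto d$ is well-defined as soon as the relations $(1_x), (2_x)$ are seen to hold in $G$, giving a surjection. To prove $\pi$ is injective, I define $\hat b_0 := dc^{-1}$ and $\hat b_i := [\hat b_0, d^i]$ in $\hat G$ (for $i \neq 0$), and verify that these elements satisfy the relations $(1_b)$--$(4_b)$ of Lemma 2.1. The universal property then provides an inverse $\phi : NC \to \hat N$ to the restriction $\pi|_{\hat N}$, where $\hat N$ denotes the normal closure of $\hat b_0$ in $\hat G$; combined with the fact that $d$ has infinite order in $\hat G$ (inherited via $\pi$ from $G$), we conclude $\hat G = \hat N \rtimes \langle d\rangle \cong G$.

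To verify $(1_x), (2_x)$ hold in $G$: an induction shows each $x_i \in NC$ (base $x_0 = b_0$; step uses normality of $NC$ in $G$). By Lemma A.1 (cited in Corollary 2.2), $NC$ is nilpotent of class 2 with derived subgroup of exponent 2 contained in $Z(NC)$, and each square $b_j^2$ is central. The class-2 power identity $(g_1 \cdots g_k)^4 = \prod_j g_j^4 \cdot \prod_{i<j} [g_j, g_i]^6$ then forces every element of $NC$ (hence $x_i$) to have order dividing 4, yielding $(1_x)$. For $(2_x)$, the element $x_i^2$ lies in $NC^2 \subseteq Z(NC)$ and is a product of $b_j^2$'s, so it suffices to check $d b_j^2 d^{-1} = b_j^2$. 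A short calculation using the conjugation formula $d b_j d^{-1} = b_{-1}^{-1} b_{j-1}$ (valid for $j \neq 0$; $j = 0$ is analogous), the class-2 identity $(gh)^2 = g^2 h^2 [h,g]$, and relations $(2_b), (3_b), (4_b)$, collapses the conjugate back to $b_j^2$.

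For injectivity of $\pi|_{\hat N}$, the task reduces to verifying $(1_b)$--$(4_b)$ on the $\hat b_i$'s in $\hat G$ from the sparse data $(1_x), (2_x)$; this is the main obstacle. A direct computation gives $x_1 = \hat b_{-1}^{-1}$, and inductively each $x_i$ can be expressed as a product of $\hat b_{-i}^{\pm 1}$ with elements of the subgroup generated by $\{\hat b_j^2\}$. To translate this into actual relations on $\hat b_i$, one must first bootstrap the nilpotent-class-2 structure of $\hat N$ from $(1_x), (2_x)$ alone---showing that its derived subgroup has exponent 2 and lies in the center. Once class-2 is in place, $(1_x)$ unpacks into $\hat b_i^4 = 1$ (giving $(1_b)$), and $(2_x)$---expanded via the class-2 commutator formulas and the recursive definition $x_{i+1} = [d^{-1}, x_i]$---produces both the commutator identities $(2_b), (3_b)$ and the symmetry $(4_b)$. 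The delicate step is the initial bootstrap of nilpotence; after that the verification is essentially mechanical relation-chasing.
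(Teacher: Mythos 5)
Your overall architecture is sound and matches the paper's in spirit: both arguments run through the decomposition $G = NC \rtimes \langle d\rangle$ and the presentation of $NC$ from Lemma 2.1, and your verification that $\left.1_{x}\right)$ and $\left.2_{x}\right)$ actually hold in $G$ (exponent 4 of $NC$ via the class-2 power identity, centrality of the $b_j^2$ and their invariance under conjugation by $d$) is correct and is the easy half.

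The genuine gap is in the injectivity direction, and it is exactly the step you flag as ``delicate'' and then do not carry out. To show $\pi|_{\hat N}$ is injective you must derive, inside the abstract group $\hat G$, the full defining relations of $NC$ from the two sparse families $\left.1_{x}\right)$, $\left.2_{x}\right)$ alone. Your plan is: first bootstrap that $\hat N$ is nilpotent of class 2 with central exponent-2 derived subgroup, then ``mechanically'' unpack $\left.1_{x}\right)$, $\left.2_{x}\right)$ into $\left.1_{b}\right)$--$\left.4_{b}\right)$. But the bootstrap is not a technical afterthought --- it is the entire content of the hard direction, and you give no argument for it. Worse, the plan is circular as stated: to rewrite $x_i$ (an $i$-fold iterated commutator of $d^{-1}$ with $b_0$) as $\hat b_{-i}^{\pm1}$ times a product of squares, you already need the squares to be central and the commutator calculus of class 2 --- i.e.\ the very structure you are trying to establish. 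The scale of the difficulty is visible in this paper's own Appendix A, where an analogous rewriting (Claim 2 and Claims 3--8) occupies several pages of delicate relation-chasing even starting from a \emph{richer} set of relations. The paper does not prove this implication in-line either: after reducing to the presentation with relations $\left.2_{o}\right)$, $\left.1_{b}'\right)$, $\left.2_{b}'\right)$, $\left.3_{b}'\right)$ (note that $\left.4_{b}\right)$ and all negatively-indexed relations are redundant once $d$ is present, since $b_{-i} = d^i b_i^{-1} d^{-i}$ --- a reduction your target list does not exploit), it cites the companion paper \citep[Lemma 2.2, Theorem A.5 and Appendix A]{lee2023characterizing} for the equivalence with the $x_i$-presentation. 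Without either reproducing that derivation or invoking such a result, your argument establishes only that $\hat G$ surjects onto $G$, not that the map is an isomorphism.
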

\begin{proof}

At first, we construct a presentation of $G$ directly from the short
exact sequence:

\noindent 
\begin{align*}
1 & \longrightarrow NC\longrightarrow G\longrightarrow\left\langle d\right\rangle \longrightarrow1,
\end{align*}
and the presentation of $NC$ in Lemma 2.1. Since

\[
b_{-i}=\left[b_{0},\,d^{-i}\right]=d^{i}b_{i}^{-1}d^{-i},
\]
all of $\left.4_{b}\right)$ and the relations involving $b_{i}$,
$i<0$ are redundant. Thus, we have a presentation in \citep[Theorem A.5]{lee2023characterizing}
for $G$, without $\left.1_{o}\right)$ and replacing $yxy$ (resp.
$x$) with $d^{-1}$ (resp. $b_{0}$), with the following relations.
\begin{lyxlist}{00.00.0000}
\item [{$\left.2_{o}\right)$}] \noindent $1=\left[b_{0}^{2},d\right],$
\item [{$\left.1_{b}'\right)$}] \noindent $1=b_{i}^{4},$ for each integer
$i\ge0,$
\item [{$\left.2_{b}'\right)$}] \noindent $\left[b_{0},b_{i}\right]=b_{i}^{2},$
for each integer $i\ge1,$
\item [{$\left.3_{b}'\right)$}] \noindent $\left[b_{i},b_{j}\right]=b_{i}^{2}b_{j-i}^{2}b_{j}^{2},\;1\le i<j.$
\end{lyxlist}
\noindent $\quad\;\:$The presentation we need to prove is also a
simplification of that of \citep[Lemma 2.2]{lee2023characterizing},
whose proof in \citep[Appendix A]{lee2023characterizing} from \citep[Theorem A.5]{lee2023characterizing}
makes no use of the relation $\left.1_{o}\right)$. $\qedhere$

\noindent \end{proof}
\begin{lemma}

There exists an injective endomorphism $\eta:G\to G$ satisfying

\noindent 
\begin{align*}
\eta\left(b_{0}\right) & =\left[d^{-1},\,b_{0}\right],\;\eta\left(d\right)=d.
\end{align*}
\end{lemma}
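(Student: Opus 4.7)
My plan is to define $\eta$ by specifying its values on the generators of $G$, verify it respects the presentation of $G$ given in Lemma 3.2 (and hence descends to a well-defined homomorphism by von Dyck's theorem), and then establish injectivity as a separate, more delicate step.

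First, set $\eta(b_0) := x_1 = [d^{-1}, b_0]$ and $\eta(d) := d$. By induction on $i \ge 0$ I would show that $\eta(x_i) = x_{i+1}$: the base case is the definition, and the inductive step is
\[
\eta(x_{i+1}) = \eta([d^{-1}, x_i]) = [\eta(d)^{-1}, \eta(x_i)] = [d^{-1}, x_{i+1}] = x_{i+2}.
\]
Consequently, each defining relation $x_i^4 = 1$ and $[x_i^2, d^{-1}] = 1$ of Lemma 3.2 maps under $\eta$ to the corresponding relation with $i$ replaced by $i+1$, which again holds in $G$. Von Dyck's theorem therefore produces a well-defined endomorphism $\eta: G \to G$.

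The hard part will be injectivity. The obvious approach via the derived series of $G$ fails: since $\eta(b_0) = x_1 \in [G, G]$, the induced map on the abelianization $G^{ab}$ already kills the cyclic summand generated by the image of $b_0$, so one cannot check injectivity stepwise on successive abelian quotients. Instead, my plan is to exploit the faithful embedding $G \hookrightarrow \mathrm{GL}(3,\,\mathbb{F}_2[t, t^{-1}])$: it suffices to show that the subgroup $\langle [D^{-1}, B_0],\, D \rangle$ of $\mathrm{GL}(3,\,\mathbb{F}_2[t, t^{-1}])$ is isomorphic to $G$ via the natural map sending $b_0 \mapsto [D^{-1}, B_0]$ and $d \mapsto D$. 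I would verify this by a direct matrix computation, checking that no unexpected relations arise among the matrix images of the $x_i$ beyond the shifted versions of those appearing in Lemma 3.2. An alternative I would explore is to realize $\eta$ as the restriction of some injective (perhaps inner) endomorphism of a larger ambient group such as $\widetilde{G}_n$ or the Puiseux series matrix group from Lemma 2.5, which would make injectivity automatic.
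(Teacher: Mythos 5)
Your construction of $\eta$ as a homomorphism is fine: defining $\eta(b_0)=x_1$, $\eta(d)=d$ on the generators of the presentation in Lemma 3.2 gives $\eta(x_i)=x_{i+1}$ by induction, each defining relation is carried to the relation with index shifted by one, and von Dyck's theorem applies. The gap is injectivity, which is the entire content of the lemma. Your primary plan --- show that $\left\langle [D^{-1},B_0],\,D\right\rangle \cong G$ by ``a direct matrix computation, checking that no unexpected relations arise'' --- is not a proof strategy: ruling out unexpected relations in an infinite subgroup of $\mathrm{GL}\left(3,\,\mathbb{F}_{2}\left[t,t^{-1}\right]\right)$ is not a finite computation, and amounts to recomputing a full presentation of $\left\langle x_1,\,d\right\rangle$, i.e.\ to redoing for this subgroup all the work (building actions, Appendix A) that produced the presentation of $G$ in the first place. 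Nothing in your write-up indicates how that would be carried out.

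Your parenthetical alternative is the right idea and is exactly what the paper does, but you leave it unexecuted, and the ambient groups you name are the wrong ones: the natural inner automorphism of $H_n$ (or of the Puiseux-series model of Lemma 2.5), namely conjugation by the stable letter, sends $d$ to $d^{n}$ rather than fixing it, so it does not realize $\eta$. The paper instead exhibits an explicit matrix $Y\in\mathrm{GL}\left(3,\,\mathbb{F}_{2}\left(t\right)\right)$, over the fraction field, satisfying $Yb_0Y^{-1}=\left[d^{-1},b_0\right]$ and $YdY^{-1}=d$; conjugation by $Y$ restricted to $G$ is then injective for free and lands in $G$ because the images of the two generators do. Producing such a $Y$ --- that is, showing $b_0$ and $\left[d^{-1},b_0\right]$ are conjugate by an element of the centralizer of $d$ in $\mathrm{GL}\left(3,\,\mathbb{F}_{2}\left(t\right)\right)$ --- is the substantive step, and it is missing from your proposal.
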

\begin{proof}

As in the proof of \citep[Lemma 2.3]{lee2023characterizing}, define
a matrix $Y\in\mathrm{GL}\left(3,\,\mathbb{F}_{2}\left(t\right)\right)$
by

\[
Y:=\left(\begin{array}{ccc}
1 & 0 & 0\\
0 & 1 & t^{-1}\\
0 & t\left(1+t\right)^{-1} & t^{-1}\left(1+t\right)^{-1}
\end{array}\right).
\]

Then, a direct computation yields
\begin{align*}
 & Yb_{0}Y^{-1}=\left[d^{-1},\,b_{0}\right],\\
 & YdY^{-1}=d.\qedhere
\end{align*}
\end{proof}
\begin{lemma}

The group $G$ is not finitely presented.

\noindent \end{lemma}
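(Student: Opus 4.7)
The proof proceeds by contradiction, combining the infinite presentation of Lemma~3.2 with the endomorphism $\eta$ from Lemma~3.3.

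Suppose that $G$ is finitely presented. Lemma~3.2 provides a presentation of $G$ on generators $c,d$ with the infinite relation set $\mathcal{R} = \{x_i^4,\, [x_i^2, d^{-1}] : i \ge 0\}$. A standard fact about group presentations (if $\langle X \mid R\rangle = G$ and $G$ is finitely presented on the same finite $X$, then a finite subset of $R$ already presents $G$) yields an integer $N \ge 0$ such that $G_N := \langle c, d \mid x_i^4,\, [x_i^2, d^{-1}] : 0 \le i \le N\rangle$ equals $G$. In particular $x_{N+1}^4 = 1$ must hold in $G_N$. The plan is to contradict this by exhibiting a quotient of $G_N$ in which the image of $x_{N+1}^4$ is nontrivial.

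Consider the further quotient $\tilde{G}_N := G_N/\langle\!\langle x_i^2 : 0 \le i \le N\rangle\!\rangle$. Because $x_i^2 = 1$ subsumes both $x_i^4 = 1$ and $[x_i^2, d^{-1}] = 1$, this equals $F(c,d)/\langle\!\langle x_i^2 : 0 \le i \le N\rangle\!\rangle$. Writing $e := b_0 = dc^{-1}$ and $f_m := c^m e c^{-m}$ for $m \in \mathbb{Z}$, one computes inductively that each $x_i$ becomes a specific word in $\{f_m\}$, and the relation $x_i^2 = 1$ translates (after conjugation by powers of $c$) into the commutation relations $[f_m, f_{m+i}] = 1$ for every $m$. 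Consequently $\tilde{G}_N \cong W_N \rtimes \langle c\rangle$, where $W_N$ is the right-angled Coxeter group on the involutions $\{f_m\}_{m\in\mathbb{Z}}$ in which two generators $f_m, f_{m'}$ commute exactly when $|m - m'| \le N$.

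The key computation is the identity $x_{N+1}^2 = (f_0 f_{N+1})^2$ in $W_N$. Base cases ($N=0,1,2$) follow by direct manipulation: one checks that $x_1 = f_0 f_1$, $x_2 = f_0 f_2$, and $x_3 = (f_1 f_2)(f_0 f_3)$ after using the available commutations, from which the claim for the square is immediate. The inductive step leverages $\eta$: since $\eta$ fixes $d$ and shifts $x_i \mapsto x_{i+1}$, it sends the level-$i$ relation $x_i^2 = 1$ to $x_{i+1}^2 = 1$, allowing one to reduce the level-$(N+1)$ computation to the level-$N$ one. Since $|0 - (N+1)| = N+1 > N$, the pair $\{f_0, f_{N+1}\}$ does not commute in $W_N$; they generate an infinite dihedral group $D_\infty \cong \mathbb{Z}/2\ast\mathbb{Z}/2$, inside which $(f_0 f_{N+1})^4$ is a non-trivial translation. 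Hence $x_{N+1}^4 \neq 1$ in $\tilde{G}_N$, contradicting $x_{N+1}^4 = 1$ in $G = G_N$.

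The principal obstacle is the inductive verification of $x_{N+1}^2 = (f_0 f_{N+1})^2$ in $W_N$: one must unwind the nested commutator $x_{N+1} = [d^{-1}, x_N]$ in the $\{f_m\}$-basis and apply the commutations of $W_N$ systematically in order to extract the ``extremal'' factor $f_0 f_{N+1}$. The endomorphism $\eta$ serves as the organisational device that makes the induction clean, by carrying the level-$N$ identity directly to the level-$(N+1)$ identity.
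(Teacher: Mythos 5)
Your argument is correct in substance but takes a genuinely different route from the paper. The paper chooses the minimal truncation level $M_{0}$, applies the Reidemeister--Schreier process to the index-$4$ subgroup $\langle\!\langle d\rangle\!\rangle_{G}=\langle x_{1},d\rangle_{G}$, and uses the injective endomorphism $\eta$ of Lemma 3.3 to identify that subgroup with $G$ presented one level lower, contradicting minimality; the terminal case $M_{0}=0$ is excluded because $x_{1},d$ would then generate a free group, violating the solvability of $G$ (Corollary 2.3). You instead show each truncation $G_{N}$ is strictly larger than $G$ by exhibiting the quotient $W_{N}\rtimes\langle c\rangle$ with $W_{N}$ a right-angled Coxeter group on $\{f_{m}\}$, in which $x_{N+1}$ becomes $z_{N+1}f_{0}f_{N+1}$ for a central involution $z_{N+1}$, so that $x_{N+1}^{4}=(f_{0}f_{N+1})^{4}\neq1$ inside the infinite dihedral special subgroup $\langle f_{0},f_{N+1}\rangle$. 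Your key identity is true and the induction closes cleanly without any appeal to $\eta$: writing $\sigma(w)=cwc^{-1}$, the recursion is $x_{i+1}=f_{0}\,\sigma(x_{i})^{-1}f_{0}^{-1}x_{i}$, and if $x_{i}=z_{i}f_{0}f_{i}$ holds in $W_{i-1}$ with $z_{i}\in\langle f_{1},\ldots,f_{i-1}\rangle$, then in $W_{i}$ one obtains $x_{i+1}=z_{i+1}f_{0}f_{i+1}$ with $z_{i+1}=f_{1}\sigma(z_{i})z_{i}f_{i}$ central in $\langle f_{0},\ldots,f_{i+1}\rangle$; the same identity shows the relator family $\{c^{m}x_{i}^{2}c^{-m}:i\le N\}$ is equivalent to the Coxeter relations, which justifies identifying the quotient with $W_{N}$.

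The one point you should repair is the proposed role of $\eta$ in the inductive step. Since $\eta(d)=d$ forces $\eta(c)=x_{1}^{-1}d$, the map $\eta$ does not act as the index shift on the basis $\{f_{m}\}=\{c^{m}b_{0}c^{-m}\}$ of $\langle\!\langle b_{0}\rangle\!\rangle$, so it does not transport the level-$N$ identity in the $\{f_{m}\}$-coordinates to the level-$(N+1)$ identity in any direct way; as written, that step would not go through. This is inessential, because the explicit recursion above does the induction by hand. Compared with the paper, your argument dispenses with Lemma 3.3 and Corollary 2.3 and produces a concrete infinite-order witness for the failure of each truncated presentation, at the cost of carrying out the word computation in the Coxeter quotient.
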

\begin{proof}

Suppose $G$ is finitely presented. Then, there exists a nonnegative
integer $M$ such that $G$ is presented by the generators $c,d$
and the following relations:
\begin{lyxlist}{00.00.0000}
\item [{$\left.1_{x,\,M}\right)$}] \noindent $1=x_{i}^{4},$ for each
integer $i$ such that $M\ge i\ge0,$
\item [{$\left.2_{x,\,M}\right)$}] \noindent $1=\left[x_{i}^{2},\,d^{-1}\right],$
for each integer $i$ such that $M\ge i\ge0.$
\end{lyxlist}
$\quad\;\:$Choose $M_{0}$ as the smallest one among the set of such
integers, and delete every relation involving the index larger than
$M_{0}$. By the presentation above, $\left\langle \!\!\left\langle d\right\rangle \!\!\right\rangle _{G}$
is of index $4$ in $G$. Furthermore, $G$ splits over it satisfying
\begin{align*}
G & =\left\langle \!\!\left\langle d\right\rangle \!\!\right\rangle _{G}\rtimes\left\langle x_{0}\right\rangle _{G}.
\end{align*}

By the Reidemeister\textendash Schreier process, $\left\langle \!\!\left\langle d\right\rangle \!\!\right\rangle _{G}$
is presented by the generators
\begin{align*}
d,\,x_{0}dx_{0}^{-1},\,x_{0}^{2}dx_{0}^{-2},\,x_{0}^{-1}dx_{0},
\end{align*}
and the relators consisting of rewritten words of $x_{0}^{i}Rx_{0}^{-i}$
in the new generators, $i=0,1,2,3$ where $R$ is a relator of $G$.
However, two generators are redundant, because from $\left.2_{x,M}\right)$
we have $x_{0}^{2}dx_{0}^{-2}=d$ and $x_{0}dx_{0}^{-1}=x_{0}^{-1}dx_{0}$.
We have $x_{0}^{-1}d^{-1}x_{0}=d^{-1}x_{1}$ by definition, and
\begin{align*}
\left\langle \!\!\left\langle d\right\rangle \!\!\right\rangle _{G} & =\left\langle x_{1},\,d\right\rangle _{G}.
\end{align*}

On the other hand, the new relators are computed as
\begin{lyxlist}{00.00.0000}
\item [{$\left.1_{y,\,M_{0}-1}\right)$}] \noindent $1=y_{i}^{4},$ for
each integer $i$ such that $M_{0}>i\ge0,$
\item [{$\left.2_{y,\,M_{0}-1}\right)$}] \noindent $1=\left[y_{i}^{2},\,d^{-1}\right],$
for each integer $i$ such that $M_{0}>i\ge0,$
\end{lyxlist}
\noindent where $y_{i}$ is a word in $x_{1}$ and $d$, recursively
defined as
\begin{align*}
y_{0} & :=x_{1},\;y_{i+1}:=\left[d^{-1},\,y_{i}\right],\;i\ge0.
\end{align*}

If $M_{0}>0$, by Lemma 3.3 we have $G=\eta^{-1}\left(\left\langle x_{1},\,d\right\rangle _{G}\right)$,
which contradicts the minimality of $M_{0}$. Therefore, we must assume
$M_{0}=0$. In this case, $x_{1}$ and $d$ freely generate $\left\langle \!\!\left\langle d\right\rangle \!\!\right\rangle _{G}=\left\langle x_{1},\,d\right\rangle _{G}$,
which contradicts the solvability of $G$ established in Corollary
2.3. $\qedhere$

\noindent \end{proof}
\begin{proofthm13}

Take an integer $n$ such that $\left|n\right|>1$, and suppose $H_{n}$
is linear. Then, according to Theorem 3.1, there exists a nonzero
integer $s$ such that $\left\langle c^{s},d^{s}\right\rangle $ is
nilpotent. Since $\rho_{s}$ is injective, $G=\left\langle c,d\right\rangle $
is also nilpotent. However, this leads to a contradiction with $\mathrm{Lemma\;3.4}$,
as a finitely generated nilpotent group is finitely presented. $\qed$

\noindent \end{proofthm13}

\section{Proof of Theorem 1.5}

For a positive integer $n$, let us define a group $Q_{n}$ as
\begin{align*}
Q_{n} & :=G/\left\langle \!\!\left\langle d^{n}\right\rangle \!\!\right\rangle _{G}.
\end{align*}
\begin{lemma}

The group $Q_{n}$ is a semidirect product satisfying

\[
Q_{n}=\left\langle b_{0},\,b_{1},\,\cdots,\,b_{n-1}\right\rangle _{Q_{n}}\rtimes\left\langle d\right\rangle _{Q_{n}},
\]
where $\left\langle d\right\rangle _{Q_{n}}$ is isomorphic to the
cyclic group of order $n$ and $\left\langle b_{0},b_{1},\cdots,b_{n-1}\right\rangle _{Q_{n}}$
is presented by the relations:
\begin{lyxlist}{00.00.0000}
\item [{$\left.1_{b,\,n}\right)$}] \noindent $1=b_{i}^{4},\;0\le i\le n-1,$
\item [{$\left.2_{b,\,n}\right)$}] \noindent $\left[b_{0},b_{i}\right]=b_{i}^{2},\;0<i\le n-1,$
\item [{$\left.3_{b,\,n}\right)$}] \noindent $\left[b_{i},b_{j}\right]=b_{i}^{2}b_{j-i}^{2}b_{j}^{2},\;0<i<j\le n-1,$
\item [{$\left.c_{n}\right)$}] \noindent $b_{i}^{2}=b_{n-i}^{2},\;1\le i\le\left\lfloor \frac{n}{2}\right\rfloor ,$
\end{lyxlist}
where $\left\lfloor \frac{n}{2}\right\rfloor $ is the greatest integer
less than or equal to $\frac{n}{2}$.

\noindent \end{lemma}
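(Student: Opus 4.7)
The plan is to derive the presentation of $Q_n$ from that of $G$, using the split structure $G = NC \rtimes \langle d\rangle$ from Section~2 together with the presentation of $NC$ from Lemma~2.1, by carefully analyzing the effect of imposing $d^n = 1$ and its $G$-conjugates.

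First I establish the semidirect product decomposition. The map $G \twoheadrightarrow G/NC \cong \langle d\rangle \cong \mathbb{Z}$ sending every $b_i$ to $0$ and $d$ to $1$, composed with reduction modulo $n$, factors through $Q_n$ because $d^n$ maps to $0$; the resulting homomorphism $Q_n \twoheadrightarrow \mathbb{Z}/n\mathbb{Z}$ forces the image $\bar d$ to have order at least $n$, and combined with the imposed $\bar d^n = 1$ this shows $\langle \bar d\rangle_{Q_n} \cong \mathbb{Z}/n\mathbb{Z}$. The kernel of this map is exactly the image of $NC$ in $Q_n$, and the section $\bar d \mapsto \bar d$ gives the desired semidirect product structure.

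Next I identify the normal factor. Since $b_{i+n} = [b_0, d^{i+n}]$ collapses to $[b_0, d^i] = b_i$ once $\bar d^n = 1$, the indexing of the generators $\{b_i : i \in \mathbb{Z}\}$ of $NC$ reduces under the quotient to $\bar b_0, \bar b_1, \ldots, \bar b_{n-1}$, with negative indices absorbed via $\bar b_{-i} = \bar b_{n-i}$. Translating the four families from Lemma~2.1: $1_b$ restricts directly to $1_{b,n}$, $2_b$ to $2_{b,n}$, and $3_b$ to $3_{b,n}$ on the index range $0 < i < j \leq n-1$; the relation $4_b$, combined with $\bar b_{-i} = \bar b_{n-i}$, becomes $\bar b_i^2 = \bar b_{n-i}^2$, which is $c_n$ once we restrict to $1 \leq i \leq \lfloor n/2\rfloor$ and remove duplicates by symmetry.

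What remains, and is the main obstacle, is verifying completeness: that the listed relations $1_{b,n}, 2_{b,n}, 3_{b,n}, c_n$ suffice and no further relation is implicitly needed. This amounts to a routine but case-heavy Tietze-transformation argument showing that each instance of $1_b, 2_b, 3_b, 4_b$ with indices outside $[0, n-1]$ reduces, via $\bar b_{i+n} = \bar b_i$, $\bar b_{-i} = \bar b_{n-i}$, the centrality of the squares $\bar b_i^2$ inherited from Lemma~A.1, and the identity $[a,b]^{-1} = [b,a]$, to one of the four stated relations. The trickiest cases are instances of $3_b$ with $i < 0 < j$ or with both indices negative, where one must confirm that the extra identifications among the $\bar b_i^2$'s supplied by $c_n$ are sufficient to close the argument without introducing any new relation beyond those listed.
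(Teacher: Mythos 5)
Your overall route is the paper's route: present $Q_{n}$ by adjoining $d^{n}=1$ to a presentation of $G$ built from $G=NC\rtimes\left\langle d\right\rangle$ and the presentation of $NC$, reduce the indices of the $b_{i}$ modulo $n$, and check that the surviving relations $\left.1_{b,\,n}\right)$--$\left.3_{b,\,n}\right)$, $\left.c_{n}\right)$ generate all the others. The semidirect-product and generator-identification parts of your argument are fine (modulo one small slip: the periodicity $b_{i+n}=b_{i}$ fails at $i\equiv0$, since $b_{kn}=\left[b_{0},d^{kn}\right]=1\ne b_{0}$ in $Q_{n}$ for $k\ne0$; this is harmless but should be stated, and is why the paper restricts to $i\ne0,-n$).

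The genuine gap is that the step you yourself call ``the main obstacle'' --- completeness of the listed relations, in particular the instances of $\left.3_{b}\right)$ with $i<0<j$ or $i<j<0$ --- is asserted to be ``routine'' but never carried out, and it is exactly where the work lies. The paper avoids these cases entirely by a better choice of starting presentation: in the proof of Lemma 3.2 it is shown that in $G$ all of $\left.4_{b}\right)$ and every relation involving $b_{i}$ with $i<0$ are already redundant, so $G$ (hence $Q_{n}$, after adding $d^{n}$) is presented using only nonnegative indices. The completeness check then collapses to the paper's Claim 1: two instances of $\left.3_{b}'\right)$ with index pairs congruent mod $n$ are literally the same relation, because $b_{i_{1}}=b_{i_{2}}$, $b_{j_{1}}=b_{j_{2}}$ and $b_{j_{1}-i_{1}}=b_{j_{2}-i_{2}}$. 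The relation $\left.4_{b}\right)$ is then re-introduced for free (it holds in $NC$, hence its image holds in $Q_{n}$) and reduced to $\left.c_{n}\right)$. By contrast, starting from the full Lemma 2.1 presentation as you propose forces you to show, by hand, that e.g.\ $\left[b_{i},b_{j}\right]=b_{i}^{2}b_{j-i}^{2}b_{j}^{2}$ with $i<0<j$ reduces, after the substitution $b_{-i}=b_{n-i}$ and possibly inverting the commutator, to an instance of $\left.3_{b,\,n}\right)$ --- and this genuinely uses $\left.c_{n}\right)$, centrality of the squares, and $\left.1_{b,\,n}\right)$. Your sketch of why this should close is plausible and the cases do work out, but as written the proof is incomplete precisely at its load-bearing step; either carry out those cases or first eliminate the negative-index relations as in Lemma 3.2.
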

\begin{proof}

By the proof of Lemma 3.2, the group $Q_{n}$ is presented by the
generators $b_{0},\,d$ and the following relations:
\begin{lyxlist}{00.00.0000}
\item [{$\left.d_{n}\right)$}] \noindent $1=d^{n},$
\item [{$\left.2_{o}\right)$}] \noindent $1=\left[b_{0}^{2},d\right],$
\item [{$\left.1_{b}'\right)$}] \noindent $1=b_{i}^{4},$ for each integer
$i\ge0,$
\item [{$\left.2_{b}'\right)$}] \noindent $\left[b_{0},b_{i}\right]=b_{i}^{2},$
for each integer $i\ge1,$
\item [{$\left.3_{b}'\right)$}] \noindent $\left[b_{i},b_{j}\right]=b_{i}^{2}b_{j-i}^{2}b_{j}^{2},\;0<i<j,$
\end{lyxlist}
where $b_{i}=\left[b_{0},\;d^{i}\right].$

By $\left.d_{n}\right)$ and the definition of $b_{i}$, for every
integer $i$ such that $i\ne0,-n$, we have $b_{i+n}=b_{i}$ in $Q_{n}$.
Therefore, we transform these relations to
\begin{lyxlist}{00.00.0000}
\item [{$\left.d_{n}\right)$}] \noindent $1=d^{n},$
\item [{$\left.2_{o}\right)$}] \noindent $1=\left[b_{0}^{2},d\right],$
\item [{$\left.1_{b,\,n}\right)$}] \noindent $1=b_{i}^{4},$ for each
integer $i$ such that $0\le i\le n-1,$
\item [{$\left.2_{b,\,n}\right)$}] \noindent $\left[b_{0},b_{i}\right]=b_{i}^{2},$
for each integer $0<i\le n-1,$
\item [{$\left.3_{b}'\right)$}] \noindent $\left[b_{i},b_{j}\right]=b_{i}^{2}b_{j-i}^{2}b_{j}^{2},\;0<i<j,$
\item [{$\left.4_{b}\right)$}] \noindent $b_{i}^{2}=b_{-i}^{2},\;i>0,$
\end{lyxlist}
where $\left.4_{b}\right)$ from Lemma 2.1 is a relation of $NC$,
so we may add this relation for $Q_{n}$. By applying $b_{i+n}=b_{i}$,
$\left.4_{b}\right)$ is reduced to
\begin{align*}
b_{i}^{2} & =b_{n-i}^{2},\;1\le i\le\left\lfloor \frac{n}{2}\right\rfloor ,
\end{align*}
which is the relation $\left.c_{n}\right)$.

\noindent \begin{claim1}

\noindent The relation $\left.3_{b}'\right)$ follows from $\left.3_{b,\,n}\right)$
and $\left.d_{n}\right)$.

\noindent \end{claim1}
\begin{proofclaim1}

Given two pairs of integers $\left(i_{1},j_{2}\right)$, $\left(i_{1},j_{2}\right)$
such that
\begin{align*}
0 & <i_{1}<j_{1},\:0<i_{2}<j_{2},\:i_{1}\equiv i_{2}\:(\mathrm{mod}\:n),\:j_{1}\equiv j_{2}\:(\mathrm{mod}\:n),
\end{align*}
it suffices to prove
\begin{align}
 & \,\left[b_{i_{1}},\,b_{j_{1}}\right]=\left[b_{i_{2}},\,b_{j_{2}}\right],\\
 & b_{i_{1}}^{2}b_{j_{1}-i_{1}}^{2}b_{j_{1}}^{2}\,=b_{i_{2}}^{2}b_{j_{2}-i_{2}}^{2}b_{j_{2}}^{2}.
\end{align}

The equality of (2) directly follows from the equations $b_{i_{1}}=b_{i_{2}}$
and $b_{j_{1}}=b_{j_{2}}$ from $\left.d_{n}\right)$. The eqality
of (3) follows from $b_{i_{1}}=b_{i_{2}}$, $b_{j_{1}}=b_{j_{2}}$,
and $b_{j_{1}-i_{1}}=b_{j_{2}-i_{2}}$, where the last follows from
the congruence
\begin{align*}
j_{1}-i_{1} & \equiv j_{2}-i_{2}\:(\mathrm{mod}\:n).\;\qed
\end{align*}
\end{proofclaim1}

We return to the proof of Lemma 4.1. By Claim 1, the group $Q_{n}$
is presented by the generators $b_{0},d$ and the relations $\left.d_{n}\right)$,
$\left.2_{o}\right)$, $\left.1_{b,\,n}\right)$, $\left.2_{b,\,n}\right)$,
$\left.3_{b,\,n}\right)$, and $\left.c_{n}\right)$. By the presentation,
$Q_{n}$ is a semidirect product $\left\langle \!\!\left\langle b_{0}\right\rangle \!\!\right\rangle _{Q_{n}}\rtimes\left\langle d\right\rangle _{Q_{n}}$,
where $\left\langle d\right\rangle _{Q_{n}}$ is isomorphic to the
cyclic group of order $n$. By the Reidemeister\textendash Schreier
process, we compute the presentation of $\left\langle \!\!\left\langle b_{0}\right\rangle \!\!\right\rangle _{Q_{n}}=\left\langle b_{0},b_{1},\cdots,b_{n-1}\right\rangle _{Q_{n}}$
as required. $\qedhere$ 

\noindent \end{proof}
\begin{corollary}

The center of $\left\langle b_{0},b_{1},\cdots,b_{n-1}\right\rangle _{Q_{n}}$
is generated by $b_{0}^{2},b_{1}^{2},\cdots,b_{\left\lfloor \frac{n}{2}\right\rfloor }^{2}$
and the group $\left\langle b_{0},b_{1},\cdots,b_{n-1}\right\rangle _{Q_{n}}$admits
a short exact sequence on the center
\begin{align*}
1 & \longrightarrow E_{2^{\left\lfloor \frac{n}{2}\right\rfloor +1}}\longrightarrow\left\langle b_{0},\,b_{1},\,\cdots,\,b_{n-1}\right\rangle _{Q_{n}}\longrightarrow E_{2^{n}}\longrightarrow1.
\end{align*}
\end{corollary}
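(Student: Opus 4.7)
The plan is to exhibit $A:=\langle b_{0},b_{1},\dots,b_{n-1}\rangle_{Q_{n}}$ as a class-two nilpotent central extension of $E_{2^{n}}$ by $E_{2^{\lfloor n/2\rfloor+1}}$, and to identify the center. Throughout, write $N:=\langle b_{0}^{2},b_{1}^{2},\dots,b_{\lfloor n/2\rfloor}^{2}\rangle_{A}$ for the candidate center.

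First I would verify $N\subseteq Z(A)$. Since $A$ is the image of $NC$ under the composition $NC\hookrightarrow G\twoheadrightarrow Q_{n}$, and $\langle b_{i}^{2}:i\ge 0\rangle_{NC}$ is contained in the center of $NC$ (by Lemma A.1, as invoked in the proof of Corollary 2.2), its image in $A$ is central. The relation $d^{n}=1$ in $Q_{n}$ makes $b_{kn}=[b_{0},d^{kn}]=1$ for $k\ge 1$ and $b_{i+n}=b_{i}$ otherwise, while $c_{n}$ identifies $b_{n-i}^{2}$ with $b_{i}^{2}$; together these show this image equals $N$.

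Next I would verify $A/N\cong E_{2^{n}}$. Relations $2_{b,n}$ and $3_{b,n}$ place every commutator $[b_{i},b_{j}]$ inside $N$, so $A/N$ is abelian; since each $b_{i}^{2}\in N$, $A/N$ is a quotient of $E_{2^{n}}$. Computing $A_{ab}$ directly from the Lemma 4.1 presentation shows $A_{ab}\cong\mathbb{Z}/4\mathbb{Z}\times E_{2^{n-1}}$ (the relations $2_{b,n}$ force $b_{i}^{2}=1$ for $i>0$ while $1_{b,n}$ leaves $b_{0}$ of order $4$), and factoring out $\langle\overline{b_{0}^{2}}\rangle$ yields $E_{2^{n}}$ as desired.

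The main obstacle is showing $N$ has rank exactly $\lfloor n/2\rfloor+1$, that is, that $b_{0}^{2},\dots,b_{\lfloor n/2\rfloor}^{2}$ satisfy no $\mathbb{F}_{2}$-linear relation in $A$. My approach is via the faithful matrix representation: the embedding $G\hookrightarrow\mathrm{GL}(3,\mathbb{F}_{2}[t,t^{-1}])$ followed by reduction modulo $t^{n}-1$ induces a homomorphism $Q_{n}\to\mathrm{GL}(3,\mathbb{F}_{2}[t]/(t^{n}-1))$, well-defined because $d^{n}\mapsto I$, and an explicit matrix computation in this target verifies independence of the images. With the rank of $N$ established, the reverse inclusion $Z(A)\subseteq N$ follows from a commutator calculation. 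Write any central $a$ as $b_{0}^{\alpha_{0}}\cdots b_{n-1}^{\alpha_{n-1}}z$ with $\alpha_{i}\in\{0,1\}$ and $z\in N$; then the identity $\sum_{i}\alpha_{i}[b_{i},b_{j}]=0$ must hold in $N$ for every $j$, and substituting the commutator formulas from $2_{b,n}$ and $3_{b,n}$ combined with the independence of the $b_{i}^{2}$ forces $\alpha_{i}=0$ for all $i$, yielding $a\in N$ and assembling the short exact sequence.
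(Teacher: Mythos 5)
Your first two steps are sound: the centrality of $N:=\langle b_0^2,\dots,b_{\lfloor n/2\rfloor}^2\rangle$ does follow from the centrality of $\langle b_i^2:i\ge0\rangle$ in $NC$ together with the periodicity $b_{i+n}=b_i$ and the relation $c_n$, and the abelianization computation correctly identifies $A/N$ with $E_{2^n}$. The genuine gap is exactly where you put the main weight. The homomorphism $Q_n\to\mathrm{GL}\left(3,\,\mathbb{F}_2[t]/(t^n-1)\right)$ is well defined, but the ``explicit matrix computation'' you defer to does not verify independence of the images of the $b_i^2$ --- it refutes it. A direct calculation from $c$ and $d$ gives $b_0^2=I+(t+t^{-1})E$ and $b_1^2=I+(t^2+t^{-2})E$, where $E$ is the matrix with entry $1$ in the positions $(i,j)$ with $i,j\in\{2,3\}$ and $0$ elsewhere. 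Since $t^{-1}\equiv t^{n-1}\pmod{t^n-1}$, the image of $b_0^2$ is the identity when $n=2$, the image of $b_1^2$ is the identity when $n=4$, and in general $b_{n/2-1}^2$ dies for every even $n$. The even case is precisely the one the paper needs (Corollary 4.3 and Lemma 4.4 apply the present corollary to $Q_{2n}$ and $Q_{2^m}$), so your certificate for the lower bound $|N|=2^{\lfloor n/2\rfloor+1}$ is unavailable. The intended argument stays inside the presentation of Lemma 4.1: the relations prescribe an explicit central extension of $E_{2^n}$ by $E_{2^{\lfloor n/2\rfloor+1}}$, and one obtains the lower bound by realizing a group of order $2^{n+\lfloor n/2\rfloor+1}$ satisfying all the relations (for instance as a quotient of the free class-$2$, exponent-$4$ group on $n$ generators), matched against the upper bound coming from the normal form $b_0^{e_0}\cdots b_{n-1}^{e_{n-1}}z$ with $z\in N$.

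A second, independent problem is the closing assertion that the equations $\sum_i\alpha_i[b_i,b_j]=0$ for all $j$ force $\alpha=0$. This is a nondegeneracy claim about the commutator pairing induced on $A/N$ and must actually be computed from $2_{b,n}$, $3_{b,n}$ and $c_n$; it is not a formal consequence of the independence of the $b_i^2$. For $n=3$ the relations give $[b_0,b_1]=[b_0,b_2]=[b_1,b_2]=b_1^2$ (using $b_2^2=b_1^2$ from $c_3$), so $b_0b_1b_2$ commutes with every generator and is central in the presented group while lying outside $N$: the pairing is degenerate and your final step cannot close. The verification therefore depends on the parity of $n$ and has to be carried out explicitly; for the even $n$ used downstream in the paper it does go through, but as written your proposal would ``prove'' the statement uniformly in $n$, which the $n=3$ computation shows is not possible.
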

\begin{proof}

It is direct from the presentation of Lemma 4.1. $\qedhere$

\noindent \end{proof}
\begin{corollary}

For each positive integer $n$, there is an injective homomorphism
\begin{align*}
\xi_{n} & :\left\langle b_{0},\,b_{1},\,\cdots,\,b_{n-1}\right\rangle _{G}\to\left\langle b_{0},\,b_{1},\,\cdots,\,b_{2n-1}\right\rangle _{Q_{2n}}
\end{align*}
satisfying
\begin{align*}
\xi_{n}\left(b_{i}\right) & =b_{i},\;0\le i\le n-1.
\end{align*}
\end{corollary}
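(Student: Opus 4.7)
The plan is to construct $\xi_{n}$ as the restriction to $\left\langle b_{0},\,b_{1},\,\cdots,\,b_{n-1}\right\rangle _{G}$ of the natural quotient $G\twoheadrightarrow Q_{2n}=G/\left\langle \!\!\left\langle d^{2n}\right\rangle \!\!\right\rangle _{G}$. This immediately gives a well-defined surjection onto $\left\langle b_{0},\,\cdots,\,b_{n-1}\right\rangle _{Q_{2n}}$ sending $b_{i}\mapsto b_{i}$ for $0\le i\le n-1$, so only injectivity requires work. I will establish injectivity by showing that both the source and the target are finite groups of cardinality exactly $2^{2n}$; a surjection between finite groups of equal cardinality is a bijection.

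For the source, Lemma 2.1 and Corollary 2.2 give that $NC$ is class-$2$ nilpotent with $NC'=\left\langle b_{i}^{2}:i\ge1\right\rangle \simeq E_{2^{\infty}}$. Hence $\left\langle b_{0},\,\cdots,\,b_{n-1}\right\rangle _{G}$ is also class-$2$ nilpotent, and its derived subgroup is generated by the commutators of $b_{0},\,\cdots,\,b_{n-1}$, which by $\left.2_{b}\right)$ and $\left.3_{b}\right)$ all lie in $\left\langle b_{1}^{2},\,\cdots,\,b_{n-1}^{2}\right\rangle $. Since $b_{1}^{2},\,\cdots,\,b_{n-1}^{2}$ are independent in $NC'\simeq E_{2^{\infty}}$, this derived subgroup equals $E_{2^{n-1}}$. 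The abelianization is $\mathbb{Z}/4\mathbb{Z}\times E_{2^{n-1}}$ of order $2^{n+1}$: modulo commutators, $b_{0}$ retains order $4$ while $b_{i}^{2}=\left[b_{0},b_{i}\right]$ becomes trivial for $i\ge1$, and the relation $\left.4_{b}\right)$ has no effect because the indices $-i$ it involves lie outside $[0,n-1]$. Multiplying yields $\left|\left\langle b_{0},\,\cdots,\,b_{n-1}\right\rangle _{G}\right|=2^{n+1}\cdot2^{n-1}=2^{2n}$.

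For the target, an analogous analysis via Lemma 4.1 and Corollary 4.2 applied with $n$ replaced by $2n$ works: the ambient $\left\langle b_{0},\,\cdots,\,b_{2n-1}\right\rangle _{Q_{2n}}$ is class-$2$ nilpotent with center $\left\langle b_{0}^{2},\,\cdots,\,b_{n}^{2}\right\rangle \simeq E_{2^{n+1}}$, so $b_{1}^{2},\,\cdots,\,b_{n-1}^{2}$ are independent central elements. Consequently, $\left\langle b_{0},\,\cdots,\,b_{n-1}\right\rangle _{Q_{2n}}$ has derived subgroup $\left\langle b_{1}^{2},\,\cdots,\,b_{n-1}^{2}\right\rangle \simeq E_{2^{n-1}}$ and abelianization $\mathbb{Z}/4\mathbb{Z}\times E_{2^{n-1}}$ of order $2^{n+1}$. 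The relation $\left.c_{2n}\right)$ ($b_{i}^{2}=b_{2n-i}^{2}$ for $1\le i\le n$) produces no identification among $b_{0},\,\cdots,\,b_{n-1}$: for $1\le i\le n-1$ the index $2n-i$ lies in $[n+1,2n-1]$, and for $i=n$ the relation is tautological. Thus the target also has order $2^{2n}$.

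The main obstacle is confirming that no consequence of $\left.4_{b}\right)$ in the source, or $\left.c_{2n}\right)$ in the target, secretly identifies distinct words in $b_{0},\,\cdots,\,b_{n-1}$ beyond what the restricted relations $\left.1_{b}\right),\left.2_{b}\right),\left.3_{b}\right)$ already provide. The index-range observations above reduce this check in each case to the independence of $b_{1}^{2},\,\cdots,\,b_{n-1}^{2}$, which follows from Corollary 2.2 (they sit inside $NC'\simeq E_{2^{\infty}}$) and from Corollary 4.2 (they sit inside the ambient center $E_{2^{n+1}}$). Once the two cardinalities are shown equal, $\xi_{n}$ is a surjection between finite groups of order $2^{2n}$, hence an isomorphism onto its image, and in particular injective.
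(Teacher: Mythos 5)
Your proposal is correct in substance but follows a genuinely different route from the paper. The paper does not count orders at all: it places the domain and codomain into their central extensions
\begin{align*}
1\longrightarrow E_{2^{n}}\longrightarrow\left\langle b_{0},\dots,b_{n-1}\right\rangle _{G}\longrightarrow E_{2^{n}}\longrightarrow1,\qquad1\longrightarrow E_{2^{n+1}}\longrightarrow\left\langle b_{0},\dots,b_{2n-1}\right\rangle _{Q_{2n}}\longrightarrow E_{2^{2n}}\longrightarrow1,
\end{align*}
observes that $\xi_{n}$ induces injections on the kernels (by independence of the $b_{i}^{2}$, i.e.\ \citep[Lemma A.3]{lee2023characterizing} and Corollary 4.2) and on the elementary abelian quotients, and concludes by the four lemma. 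Your argument instead shows the restriction of $G\twoheadrightarrow Q_{2n}$ is a surjection between finite groups of equal order $2^{2n}$. Both proofs ultimately rest on the same two inputs (the identification of the centers and the independence of the squares $b_{i}^{2}$ on each side), so neither is more general, but the four-lemma version avoids finiteness and avoids the one place where your write-up is loose: you compute the abelianization of the subgroup $\left\langle b_{0},\dots,b_{n-1}\right\rangle _{Q_{2n}}$ by ``restricting'' the relations of Lemma 4.1, but a subgroup is not presented by the restricted relations of the ambient presentation, so the lower bound $2^{n+1}$ on that abelianization needs an argument --- e.g.\ push $b_{0},\dots,b_{n-1}$ into the abelianization of the ambient group $\left\langle b_{0},\dots,b_{2n-1}\right\rangle _{Q_{2n}}$ (which \emph{is} computable from Lemma 4.1 and equals $\mathbb{Z}/4\mathbb{Z}\times E_{2^{2n-1}}$) and note they generate a subgroup of order $2^{n+1}$ there; equivalently, combine the central $E_{2^{n}}=\left\langle b_{0}^{2},\dots,b_{n-1}^{2}\right\rangle $ with the image $E_{2^{n}}$ in the central quotient $E_{2^{2n}}$ of Corollary 4.2, which is exactly the paper's four-lemma computation in disguise. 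The same caveat applies, more mildly, to your use of the relations of Lemma 2.1 for the source, where the needed presentation of $\left\langle b_{0},\dots,b_{n-1}\right\rangle _{G}$ is supplied by \citep[Lemma A.2]{lee2023characterizing} as quoted in Appendix A. With that step made explicit, your counting argument is complete.
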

\begin{proof}

By \citep[Lemma A.3]{lee2023characterizing}, the center of $\left\langle b_{0},b_{1},\cdots,b_{n-1}\right\rangle _{G}$
is $\left\langle b_{0}^{2},b_{1}^{2},\cdots,b_{n-1}^{2}\right\rangle _{G}$,
and induces a short exact sequence:
\begin{align*}
1 & \longrightarrow E_{2^{n}}\longrightarrow\left\langle b_{0},\,b_{1},\,\cdots,\,b_{n-1}\right\rangle _{G}\longrightarrow E_{2^{n}}\longrightarrow1.
\end{align*}

On the other hand, the center of $\left\langle b_{0},b_{1},\cdots,b_{2n-1}\right\rangle _{Q_{2n}}$
is $\left\langle b_{0}^{2},b_{1}^{2},\cdots,b_{n}^{2}\right\rangle _{Q_{2n}}$,
and induces a short exact sequence:
\begin{align*}
1 & \longrightarrow E_{2^{n+1}}\longrightarrow\left\langle b_{0},\,b_{1},\,\cdots,\,b_{2n-1}\right\rangle _{Q_{2n}}\longrightarrow E_{2^{2n}}\longrightarrow1.
\end{align*}

By applying the four lemma, we obtain the injectivity. $\qedhere$

\noindent \end{proof}

For each pair of integers $\left(m,n\right)$ such that $n$ is odd
and $m>0$, define a group $P_{n,\,m}$ to be
\[
P_{n,\,m}:=H_{n}/\left\langle \!\!\left\langle d^{2^{m}},\,t^{2^{m}}\right\rangle \!\!\right\rangle _{H_{n}},
\]
and let $\pi_{n,\,m}:H_{n}\to P_{n,\,m}$ be the quotient map.

\noindent \begin{lemma}

For each pair of integers $\left(m,n\right)$ such that $n$ is odd
and $m>0$, the group $P_{n,\,m}$ is a semidirect product

\[
P_{n,\,m}=Q_{2^{m}}\rtimes\left\langle t\right\rangle _{P_{n,\,m}},
\]
where $\left\langle t\right\rangle _{P_{n,\,m}}$ is isomorphic to
the cyclic group of order $2^{m}$ and $Q_{2^{m}}$ is generated by
$c,d$ in $P_{n,\,m}$.

\noindent \end{lemma}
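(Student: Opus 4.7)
The plan is to identify $P_{n,m}$ with the external semidirect product $\widetilde{P}_{n,m} := Q_{2^m} \rtimes_{\bar\rho_n} \mathbb{Z}/2^m\mathbb{Z}$, where $\bar\rho_n$ denotes the endomorphism of $Q_{2^m}$ induced by $\rho_n|_G$. The descent is immediately well-defined since $\rho_n(d^{2^m}) = (d^{2^m})^n$ lies in the normal closure of $d^{2^m}$ in $G$.

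The main obstacle is showing that $\bar\rho_n$ is an automorphism of $Q_{2^m}$ of order dividing $2^m$, which by construction reduces to verifying $\bar\rho_n^{2^m}(c) = c$ and $\bar\rho_n^{2^m}(d) = d$ in $Q_{2^m}$. By Lemma 4.1, $d$ has order exactly $2^m$, and for odd $n$ Euler's theorem yields $n^{2^m} \equiv 1 \pmod{2^m}$, giving $d^{n^{2^m}} = d$. For $c$, the determinant induces a surjection $Q_{2^m} \twoheadrightarrow \mathbb{Z}/2^m\mathbb{Z}$ with kernel $N := \langle b_0, \ldots, b_{2^m - 1}\rangle_{Q_{2^m}}$; the image of $c$ is a generator of $\mathbb{Z}/2^m\mathbb{Z}$, so $c^{2^m} \in N$. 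By Corollary 4.2, $N$ is a central extension of $E_{2^{2^m}}$ by $E_{2^{2^{m-1}+1}}$, both of exponent $2$, so $N$ has exponent $4$; hence $\mathrm{ord}(c)$ divides $2^{m+2}$. A short induction starting from $n^2 \equiv 1 \pmod 8$ shows $n^{2^m} \equiv 1 \pmod{2^{m+2}}$ for odd $n$ and $m \geq 1$, so $c^{n^{2^m}} = c$ as well. Thus $\bar\rho_n^{2^m} = \mathrm{id}_{Q_{2^m}}$, and $\bar\rho_n$ is bijective with inverse $\bar\rho_n^{2^m - 1}$.

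With $\widetilde{P}_{n,m}$ well-defined, I would then check that the assignments $c, d \mapsto$ their $Q_{2^m}$-images and $t \mapsto s$ (the generator of $\mathbb{Z}/2^m\mathbb{Z}$) extend to a homomorphism $\psi: H_n \to \widetilde{P}_{n,m}$: the HNN relations transfer since $scs^{-1} = \bar\rho_n(c) = c^n$ and similarly for $d$. Since $\psi(d^{2^m}) = 1$ and $\psi(t^{2^m}) = 1$, $\psi$ descends to a surjection $\bar\psi: P_{n,m} \twoheadrightarrow \widetilde{P}_{n,m}$. Conversely, the retraction $H_n \to \mathbb{Z}/2^m\mathbb{Z}$ sending $t \mapsto 1$ and $c, d \mapsto 0$ factors through $P_{n,m}$, forcing $\bar t := \pi_{n,m}(t)$ to have order exactly $2^m$ and $\bar G \cap \langle \bar t \rangle = \{1\}$, where $\bar G := \pi_{n,m}(G)$. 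Conjugation by $\bar t$ preserves $\bar G$ in both directions (using $\bar t^{-1} = \bar t^{2^m - 1}$), giving $P_{n,m} = \bar G \rtimes \langle \bar t \rangle$ with $\langle \bar t \rangle \cong \mathbb{Z}/2^m\mathbb{Z}$. Finally, $\bar\psi$ restricts to a surjection $\bar G \twoheadrightarrow Q_{2^m}$ whose composition with the natural $Q_{2^m} \twoheadrightarrow \bar G$ is the identity on generators; hence $\bar G \cong Q_{2^m}$, completing the identification.
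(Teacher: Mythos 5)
Your proof is correct, and it reaches the conclusion by a genuinely different route than the paper, although the two arguments share the same arithmetic core. The paper works entirely with presentations: it writes down the presentation of $P_{n,\,m}$ coming from Lemma 3.2 and the HNN/quotient relators, applies the Reidemeister--Schreier process to $\left\langle \!\!\left\langle c,d\right\rangle \!\!\right\rangle _{P_{n,\,m}}$, and shows that the only surviving relators beyond those of $Q_{2^{m}}$ are $d^{n^{2^{m}}-1}$ and $c^{n^{2^{m}}-1}$, which it then proves are consequences of $d^{2^{m}}=1$. You instead build the external model $Q_{2^{m}}\rtimes_{\bar{\rho}_{n}}\mathbb{Z}/2^{m}\mathbb{Z}$, verify that $\bar{\rho}_{n}$ is an automorphism with $\bar{\rho}_{n}^{2^{m}}=\mathrm{id}$, and then sandwich $\pi_{n,\,m}\left(G\right)$ between two surjections whose composite is the identity on generators. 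The decisive computation is identical in both treatments: one must show $c^{n^{2^{m}}-1}=1$ in $Q_{2^{m}}$, which both proofs obtain from the facts that $c^{2^{m}}$ lands in the exponent-$4$ subgroup $\left\langle b_{0},\ldots,b_{2^{m}-1}\right\rangle _{Q_{2^{m}}}$ (you cite Corollary 4.2, the paper cites the exponent bound from the external reference) and that $2^{m+2}$ divides $n^{2^{m}}-1$ for odd $n$. What your approach buys is that you avoid tracking relators through the rewriting process --- the semidirect-product structure and the isomorphism $\left\langle \!\!\left\langle c,d\right\rangle \!\!\right\rangle _{P_{n,\,m}}\simeq Q_{2^{m}}$ fall out of the universal properties of the HNN extension and of quotients --- at the cost of having to verify up front that $\bar{\rho}_{n}$ is well defined on $Q_{2^{m}}$ and has order dividing $2^{m}$. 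The paper's route is more self-contained at the level of presentations but requires more care in arguing that the Reidemeister--Schreier relators collapse correctly. Both are sound.
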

\begin{proof}

From the presentation of $G$ in Lemma 3.2, $P_{n,\,m}$ is presented
by the generators $c,d,t$, where $c$ (resp. $d$) is the image of
$c$ (resp. $d$) in $H_{n}$, and the relators
\begin{align*}
R_{G},\,tct^{-1}c^{-n},\,tdt^{-1}d^{-n},\,d^{2^{m}},\,t^{2^{m}},
\end{align*}
where $R_{G}$ is the collection of the relators of $G$. From this
presentation, we have
\begin{align*}
P_{n,\,m}/\left\langle \!\!\left\langle c,\,d\right\rangle \!\!\right\rangle _{P_{n,\,m}} & \simeq\left\langle t\right\rangle _{P_{n,\,m}}\simeq\mathbb{Z}/2^{m}\mathbb{Z}.
\end{align*}

Therefore, $P_{n,\,m}$ is a semidirect product $\left\langle \!\!\left\langle c,d\right\rangle \!\!\right\rangle _{P_{n,\,m}}\rtimes\left\langle t\right\rangle _{P_{n,\,m}}$.
By the Reidemeister\textendash Schreier process, we compute the presentation
of $\left\langle \!\!\left\langle c,d\right\rangle \!\!\right\rangle _{P_{n,\,m}}$.
A new generating set is

\[
\left\{ t^{i}ct^{-i},\,t^{i}dt^{-i}\;:\;0\le i\le2^{m}-1\right\} ,
\]
which is again generated by $c$ and $d$ by the HNN construction.
Likewise, any $t^{i}$ conjugation of a relator from $R_{G}$ is reduced
to relators in $R_{G}$.

$t^{i}$ conjugations of $d^{2^{m}}$ are also reduced to $d^{2^{m}}$,
as for each integer $i$ such that $0\le i\le2^{m}-1$,
\begin{align*}
t^{i}d^{2^{m}}t^{-i} & =\left(d^{2^{m}}\right)^{n^{i}}.
\end{align*}

$t^{i}$ conjugations of $tct^{-1}c^{-n}$ and $tdt^{-1}d^{-n}$ are
deleted with removing $2^{m}-1$ additional generators, except for
two relators:
\begin{align*}
d^{n^{2^{m}}-1},\,c^{n^{2^{m}}-1}.
\end{align*}

For any pair of integers $\left(m,n\right)$ such that $m>0$ and
$n$ is odd, $2^{m+2}$ divides $n^{2^{m}}-1$, which is easily proved
by induction. Thus, the relator $d^{n^{2^{m}}-1}$ is deduced from
$d^{2^{m}}$. On the other hand, we decompose $c^{2^{m}}$ as
\begin{align*}
c^{2^{m}} & =\left(b_{0}^{-1}d\right)^{2^{m}}=b_{0}^{-1}\left(db_{0}^{-1}d^{-1}\right)\left(d^{2}b_{0}^{-1}d^{-2}\right)\cdots\left(d^{2^{m}-1}b_{0}^{-1}d^{-2^{m}+1}\right)d^{2^{m}},
\end{align*}

\noindent where the last term is cancelled from $d^{2^{m}}=1$ in
$P_{n,\,m}$. 

Then, $c^{2^{m}}$ is included in the quotient image of
\begin{align*}
\left\langle b_{0},\,db_{0}d^{-1},\,\cdots,\,d^{2^{m}-1}b_{0}d^{-2^{m}+1}\right\rangle _{G}
\end{align*}
in $P_{n,\,m}$. By \citep[Lemma A.3]{lee2023characterizing}, we
have
\begin{align*}
1 & =\left(c^{2^{m}}\right)^{4}=c^{2^{m+2}}
\end{align*}
in $P_{n,\,m}$. From the fact $2^{m+2}\;|\;n^{2^{m}}-1$, this relation
deduced from $d^{2^{m}}=1$ implies $c^{n^{2^{m}}-1}=1$. In summary,
we have established the isomorphism
\begin{align*}
\left\langle \!\!\left\langle c,\,d\right\rangle \!\!\right\rangle _{P_{n,\,m}} & \simeq G/\left\langle \!\!\left\langle d^{2^{m}}\right\rangle \!\!\right\rangle _{G}=Q_{2^{m}}.\;\qedhere
\end{align*}
\end{proof}

\noindent \begin{lemma}

The group $H_{n}$ is a semidirect product satisfying
\[
H_{n}\simeq\widetilde{NC}_{n}\rtimes BS\left(1,\,n\right),
\]
where $BS\left(1,n\right)$ is the Baumslag\textendash Solitar group.

\noindent \end{lemma}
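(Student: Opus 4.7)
The plan is to exhibit $\widetilde{NC}_n$ as a normal subgroup of $H_n$ with quotient $BS(1,n)$, and then to construct an explicit section. The main tools come from earlier in the paper: from the proof of Theorem 1.2 I have $H_n = \widetilde{G}_n \rtimes \langle t\rangle$, and Lemma 2.5 supplies the short exact sequence $1 \to \widetilde{NC}_n \to \widetilde{G}_n \to \mathbb{Z}[1/n] \to 1$ via the determinant map. First I would verify that $\widetilde{NC}_n$ is normal in all of $H_n$; since normality in $\widetilde{G}_n$ is already given, it suffices to check conjugation by $t^{\pm 1}$. This follows from $t \cdot (t^{-i} NC\, t^i) \cdot t^{-1} = t^{-(i-1)} NC\, t^{i-1}$ and $t^{-1} \cdot (t^{-i} NC\, t^i) \cdot t = t^{-(i+1)} NC\, t^{i+1}$, with the $i=0$ case of the first identity handled by $t NC t^{-1} = \rho_n(NC) \subset NC$.

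Next I would compute the quotient. Combining the above with Lemma 2.5 yields $H_n / \widetilde{NC}_n \cong \mathbb{Z}[1/n] \rtimes \langle t\rangle$, where $t$ acts on $\mathbb{Z}[1/n]$ by multiplication by $n$: conjugation by $t$ sends $t^{-i} d t^i$ to $t^{-(i-1)} d t^{i-1}$, whose image under $\det \circ \mathrm{rep}$ in $\mathbb{Z}[1/n]$ passes from $n^{-i}$ to $n^{-(i-1)} = n \cdot n^{-i}$. This semidirect product is precisely $BS(1,n) = \langle a, t \mid tat^{-1} = a^n\rangle$, with the class of $d$ corresponding to $a$ and the class of $t$ to $t$.

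Finally, I define $s \colon BS(1,n) \to H_n$ by $a \mapsto d$ and $t \mapsto t$; this is well-defined because $t d t^{-1} = \rho_n(d) = d^n$ holds in $H_n$, and it is a section of the projection $\pi \colon H_n \twoheadrightarrow H_n / \widetilde{NC}_n \cong BS(1,n)$ by the identifications above. Since $\pi \circ s = \mathrm{id}$, the map $s$ is injective and the short exact sequence $1 \to \widetilde{NC}_n \to H_n \to BS(1,n) \to 1$ splits, yielding $H_n \cong \widetilde{NC}_n \rtimes BS(1,n)$. I do not anticipate substantial obstacles: the only mildly subtle point is that $\widetilde{NC}_n$ is defined as an ascending union indexed by nonnegative integers, so normality under $t$-conjugation (which decreases the index) requires the base case $\rho_n(NC) \subset NC$, but this is immediate since $\rho_n$ is an endomorphism of $G$ and $NC = \ker(\det|_G)$.
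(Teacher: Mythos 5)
Your proposal is correct and follows essentially the same route as the paper: both start from $H_{n}=\widetilde{G}_{n}\rtimes\left\langle t\right\rangle$ together with the exact sequence of Lemma 2.5, reduce the statement to the normality of $\widetilde{NC}_{n}$ in $H_{n}$, and identify the quotient $\mathbb{Z}\left[\frac{1}{n}\right]\rtimes\left\langle t\right\rangle$ with $BS\left(1,n\right)$. The only cosmetic difference is that you verify normality by conjugating the ascending union by $t^{\pm1}$ (using $\rho_{n}\left(NC\right)\subset NC$), whereas the paper conjugates the generators $t^{-i}b_{j}t^{i}$ by powers of $d$; both checks are valid and complete the argument.
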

\begin{proof}

Lemma 2.5 and the fact $H_{n}=\widetilde{G}_{n}\rtimes\left\langle t\right\rangle _{H_{n}}$
in the proof of Theorem 1.2 (see Section 2) imply that
\begin{align*}
H_{n} & =\widetilde{G}_{n}\rtimes\left\langle t\right\rangle _{H_{n}}\simeq\left(\widetilde{NC}_{n}\rtimes\mathbb{Z}\left[\frac{1}{n}\right]\right)\rtimes\left\langle t\right\rangle _{H_{n}},
\end{align*}

It suffices to show that $\widetilde{NC}_{n}$ is normal in $H_{n}$,
which means
\begin{align*}
H_{n} & \simeq\left(\widetilde{NC}_{n}\rtimes\mathbb{Z}\left[\frac{1}{n}\right]\right)\rtimes\left\langle t\right\rangle _{H_{n}}\simeq\widetilde{NC}_{n}\rtimes\left(\mathbb{Z}\left[\frac{1}{n}\right]\rtimes\left\langle t\right\rangle _{H_{n}}\right)\simeq\widetilde{NC}_{n}\rtimes BS\left(1,n\right).
\end{align*}

For any pair of integers $\left(i,j\right)$ such that $i\ge0$, take
a generator $t^{-i}b_{j}t^{i}$ of $\widetilde{NC}_{n}$. For any
integer $l$, a computation yields
\begin{align*}
d^{l}t^{-i}b_{j}t^{i}d^{-l} & =t^{-i}\left(t^{i}d^{l}t^{-i}\right)b_{j}\left(t^{i}d^{l}t^{-i}\right)^{-1}t^{i}=t^{-i}d^{ln^{i}}b_{j}d^{-ln^{i}}t^{i}\in\widetilde{NC}_{n}.\;\qedhere
\end{align*}
\end{proof}
\begin{proofthm15}

Moldavanski\u{\i} \citep[Teorema 2]{MR3076943} showed that the Baumslag\textendash Solitar
group $BS\left(1,n\right)$ is residually $p$ if and only if $n\equiv1$
mod $p$. By Lemma 4.5, it suffices to show that for every odd integer
$n$, $H_{n}$ is a residually 2 group.

Fix an odd integer $n$ and select a nonzero element $g$ in $H_{n}$.
According to Lemma 4.5, there exist $\nu\in\widetilde{NC}_{n}$ and
$\beta\in\left\langle d,t\right\rangle _{H_{n}}$ such that $g=\nu\beta$.
If $\beta\ne1$, by using the fact that $\widetilde{NC}_{n}$ is normal
in $H_{n}$, apply the fact that $BS\left(1,n\right)$ is residually
$2$ in \citep{MR3076943} to construct a required homomorphism from
$H_{n}$ to a 2-group. Therefore, we may assume $g\in\widetilde{NC}_{n}$.
Due to the normalcy of $\widetilde{NC}_{n}$, we may further assume
$g\in\left\langle b_{i}\;:\;i=0,1,2,\cdots\right\rangle $, composing
inner automorphisms with the quotient map if necessary.

For some sufficiently large integer $M$, suppose $g\in\left\langle b_{0},b_{1},\cdots,b_{M}\right\rangle $.
Choose an integer $r$ such that $M<2^{r}$. Then, we have
\begin{align*}
\pi_{n,\,r+1}\left(g\right) & \ne1,
\end{align*}
since
\begin{align*}
\xi_{2^{r}} & :\left\langle b_{0},\,b_{1},\,\cdots,\,b_{2^{r}-1}\right\rangle _{G}\to\left\langle b_{0},\,b_{1},\,\cdots,\,b_{2^{r+1}-1}\right\rangle _{Q_{2^{r+1}}}
\end{align*}
is injective by Corollary 4.3. $\qed$

\noindent \end{proofthm15}
\begin{remark}

Lubotzky in \citep{MR0928062} established a well-known theorem stating
that a group $\Gamma$ is linear over a field of characteristic 0
if and only if there is a prime $p$ such that $\Gamma$ admits a
$p$-congruence structure with a bounded rank $c$. According to the
proof of Theorem 1.5, $\left\{ \ker\pi_{n,\,m}\right\} _{m\ge1}$
is a descending chain of normal subgroups of $H_{n}$ such that
\begin{align*}
\bigcap_{m=1}^{\infty}\ker\pi_{n,\,m} & =\left\{ 1\right\} .
\end{align*}

Moreover, for all $m\ge1$, each quotient $\ker\pi_{n,\,1}/\ker\pi_{n,\,m}$
is a finite 2-group. Therefore, we observe that $\left\{ \ker\pi_{n,\,m}\right\} _{m\ge1}$
behaves almost like a 2-congruence structure for $H_{n}$, after formally
adding $H_{n}$ itself as the first element in the chain. When $\left|n\right|>1$,
the only obstruction is the nonexistence of a uniform bound on the
ranks of $\ker\pi_{n,\,m_{1}}/\ker\pi_{n,\,m_{2}}$; if such a bound
existed, $H_{n}$ would be linear, violating Theorem 1.3.

\noindent \end{remark}
\begin{appendices}
	\section{Proof of Lemma 2.1}

\noindent The goal is to prove:

\noindent \begin{lemma21}

The group $NC$ is presented by the generators $\left\{ b_{i}\::\:i\in\mathbb{Z}\right\} $
and the following relations:
\begin{lyxlist}{00.00.0000}
\item [{$\left.1_{b}\right)$}] \noindent $1=b_{i}^{4},\;i\in\mathbb{Z},$
\item [{$\left.2_{b}\right)$}] \noindent $\left[b_{0},b_{i}\right]=b_{i}^{2},\;i\ne0,$
\item [{$\left.3_{b}\right)$}] \noindent $\left[b_{i},b_{j}\right]=b_{i}^{2}b_{j-i}^{2}b_{j}^{2},\;i<j,\;i\ne0\ne j,$
\item [{$\left.4_{b}\right)$}] \noindent $b_{i}^{2}=b_{-i}^{2},\;i>0.$
\end{lyxlist}
\noindent \end{lemma21}
\begin{proof}

From the fact $\det b_{0}=1$ and (1), we apply \citep[Lemma A.2]{lee2023characterizing}
to $\left\langle b_{0},b_{1},\cdots,b_{n}\right\rangle _{G}$ for
each positive integer $n$. Then, this group is presented by the generators
$b_{0},b_{1},\cdots,b_{n}$ and the relations:
\begin{lyxlist}{00.00.0000}
\item [{$\left.1_{b,\,n}\right)$}] \noindent $1=b_{i}^{4},\;0\le i\le n,$
\item [{$\left.2_{b,\,n}\right)$}] \noindent $\left[b_{0},b_{i}\right]=b_{i}^{2},\;0<i\le n,$
\item [{$\left.3_{b,\,n}\right)$}] \noindent $\left[b_{i},b_{j}\right]=b_{i}^{2}b_{j-i}^{2}b_{j}^{2},\;0<i<j\le n.$
\end{lyxlist}
$\quad\;\:$Therefore, the subgroup $\left\langle d^{m}b_{0}d^{-m}\;:\;m\le0\right\rangle $
is presented by the generators $b_{0},b_{1},\cdots$ and the relations:
\begin{lyxlist}{00.00.0000}
\item [{$\left.1_{b,\,+}\right)$}] \noindent $1=b_{i}^{4},\;0\le i,$
\item [{$\left.2_{b,\,+}\right)$}] \noindent $\left[b_{0},b_{i}\right]=b_{i}^{2},\;0<i,$
\item [{$\left.3_{b,\,+}\right)$}] \noindent $\left[b_{i},b_{j}\right]=b_{i}^{2}b_{j-i}^{2}b_{j}^{2},\;0<i<j.$
\end{lyxlist}
$\quad\;\:$Define $NC_{0}:=\left\langle d^{m}b_{0}d^{-m}\;:\;m\le0\right\rangle $
and $NC_{k+1}:=dNC_{k}d^{-1}$ for each $k\ge0$. Then, by definition,
we have
\begin{align*}
NC & =\cup_{k=0}^{\infty}NC_{k}.
\end{align*}
\begin{claim2}

For each integer $k\ge0$, the subgroup $NC_{k}$ is presented by
the generators
\begin{align*}
b_{-k},\,b_{-k+1},\,\cdots,\,b_{0},\,b_{1},\,\cdots
\end{align*}
and the relations:
\begin{lyxlist}{00.00.0000}
\item [{$\left.1_{b,\,-k}\right)$}] \noindent $1=b_{i}^{4},\;-k\le i,$
\item [{$\left.2_{b,\,-k}\right)$}] \noindent $\left[b_{0},b_{i}\right]=b_{i}^{2},\;i\ne0,\;-k\le i,$
\item [{$\left.3_{b,\,-k}\right)$}] \noindent $\left[b_{i},b_{j}\right]=b_{i}^{2}b_{j-i}^{2}b_{j}^{2},\;-k\le i<j,\;i\ne0\ne j,$
\item [{$\left.4_{b,\,-k}\right)$}] \noindent $b_{i}^{2}=b_{-i}^{2},\;0<i\le k.$
\end{lyxlist}
\noindent \end{claim2}

Let us postpone the proof of Claim 2 for a while. Then, the rest is
canonical. Consider a relator $R$ in $NC$. Then, there exists an
integer $k$ such that $R\in NC_{k}$, which implies $R$ is included
in the normal closure of the relators of $NC_{k}$. Therefore, by
collecting all of the relators of $\left.1_{b,\,-k}\right)\text{\textendash}\left.4_{b,\,-k}\right)$
in Claim 2 for every $k$, we obtain the set of relators desired.
$\qedhere$

\noindent \end{proof}
\begin{lemma}

Under the assumptions $\left.1_{b,\,-k}\right)\text{\textendash}\left.4_{b,\,-k}\right)$,
the center of $NC_{k}=$$\left\langle b_{i}\;:\;-k\le i\right\rangle $
contains
\begin{align*}
\left\langle b_{i}^{2}\;:\;-k\le i\right\rangle _{NC_{k}} & =\left\langle b_{i}^{2}\;:\;0\le i\right\rangle _{NC_{k}}.
\end{align*}
\end{lemma}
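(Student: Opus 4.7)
The plan is to prove the lemma by induction on $k$. The equality $\langle b_{i}^{2}\;:\;-k\le i\rangle_{NC_{k}}=\langle b_{i}^{2}\;:\;0\le i\rangle_{NC_{k}}$ is immediate from relation $\left.4_{b,\,-k}\right)$, so the content of the lemma lies in the centrality assertion. For the base case $k=0$, relation $\left.4_{b,\,0}\right)$ is vacuous and $NC_{0}$ is presented by $\left.1_{b,\,0}\right),\left.2_{b,\,0}\right),\left.3_{b,\,0}\right)$ alone, i.e., the ``positive-index'' presentation. I would invoke \citep[Lemma A.3]{lee2023characterizing} --- the same lemma used in the proof of Corollary 4.3 --- applied to each finitely generated subgroup $\langle b_{0},b_{1},\ldots,b_{n}\rangle$ and then pass to the ascending union to conclude that $\langle b_{\ell}^{2}\;:\;\ell\ge0\rangle$ is central in $NC_{0}$.

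For the inductive step, assume the statement holds for $NC_{k-1}$. The subgroup of $NC_{k}$ generated by $\{b_{\ell}\;:\;\ell\ge-k+1\}$ satisfies the defining relations of $NC_{k-1}$, so the inductive hypothesis furnishes that every $b_{\ell}^{2}$ with $\ell\ge0$ centralizes all generators $b_{j}$ with $j\ge-k+1$. It therefore suffices to verify that $b_{\ell}^{2}$ also centralizes the single new generator $b_{-k}$. The case $\ell=0$ follows from $\left.2_{b,\,-k}\right)$ via the standard manipulation: $[b_{0},b_{-k}]=b_{-k}^{2}$ together with $b_{-k}^{4}=1$ yields $b_{0}b_{-k}b_{0}^{-1}=b_{-k}^{-1}$, whence $b_{0}^{2}b_{-k}b_{0}^{-2}=b_{-k}$. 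For $\ell>0$, I would apply $\left.3_{b,\,-k}\right)$ with $(i,j)=(-k,\ell)$ to obtain $[b_{-k},b_{\ell}]=b_{-k}^{2}b_{\ell+k}^{2}b_{\ell}^{2}$ and then rearrange, using $b_{m}^{-2}=b_{m}^{2}$, to
\[
b_{-k}^{-1}b_{\ell}b_{-k}=b_{\ell}^{-1}b_{\ell+k}^{2}b_{-k}^{2}.
\]
Next, substitute $b_{-k}^{2}=b_{k}^{2}$ using $\left.4_{b,\,-k}\right)$. All three squares $b_{\ell}^{2},b_{\ell+k}^{2},b_{k}^{2}$ now lie in $\langle b_{i}^{2}\;:\;i\ge0\rangle\subset NC_{k-1}$, and by the inductive hypothesis $b_{\ell+k}^{2}$ and $b_{k}^{2}$ commute with $b_{\ell}$ and with one another. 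Squaring both sides of the displayed equation and applying $b_{\ell}^{4}=b_{\ell+k}^{4}=b_{k}^{4}=1$ then collapses the right-hand side to $b_{\ell}^{-2}=b_{\ell}^{2}$. Finally, the element $b_{-k}^{2}=b_{k}^{2}$ itself is central in $NC_{k-1}$ by IH and commutes trivially with $b_{-k}$, so it is central in $NC_{k}$.

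The main obstacle is the bookkeeping in the squaring computation of the last case: one must correctly interleave the 2-torsion identity $b_{m}^{-2}=b_{m}^{2}$, the substitution $b_{-k}^{2}=b_{k}^{2}$ from $\left.4_{b,\,-k}\right)$, and the commutations furnished by the inductive hypothesis in order to annihilate the cross-terms that arise from squaring the three-letter product $b_{\ell}^{-1}b_{\ell+k}^{2}b_{k}^{2}$. Once one verifies that all three of $b_{\ell}^{2}$, $b_{\ell+k}^{2}$, $b_{k}^{2}$ are central in $NC_{k-1}$, the collapse is mechanical, but this bookkeeping is the only delicate point of the argument.
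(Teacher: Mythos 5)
Your argument is correct and rests on the same ingredients as the paper's proof: the equality of the two generating sets via $\left.4_{b,\,-k}\right)$, the appeal to \citep[Lemma A.3]{lee2023characterizing} for the commutations among nonnegative-index generators, and then $\left.3_{b,\,-k}\right)$ combined with $\left.4_{b,\,-k}\right)$ and the order-$4$ relation to handle a negative index against a positive one. The only real difference is organizational: the paper uses no induction on $k$ and instead proves $\left[b_{i},\,b_{j}^{2}\right]=1$ for all $i<0<j$ in one direct expansion of the commutator, and indeed your induction is sound but dispensable, since the commutations you draw from the inductive hypothesis (of $b_{\ell+k}^{2}$ and $b_{k}^{2}$ with $b_{\ell}$) involve only nonnegative indices and are already supplied by your base case.
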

\begin{proof}

The eqality directly follows from $\left.4_{b,\,-k}\right)$. \citep[Lemma A.3]{lee2023characterizing}
imiplies
\begin{align}
\left[b_{i}^{2},\,b_{j}\right] & =1=\left[b_{i},\,b_{j}^{2}\right],\;0\le i,j.
\end{align}

It suffices to show $\left[b_{i},b_{j}^{2}\right]=1$ when $i<0\le j$.
The cases when $j=0$ are directly established from $\left.2_{b,\,-k}\right)$.
When $i<0<j$, we deduce that
\begin{align*}
 & \,\left[b_{i},\,b_{j}^{2}\right]=b_{i}^{-1}b_{j}^{-2}b_{i}b_{j}^{2}\\
= & \,b_{i}^{-1}b_{j}^{-1}b_{i}\left[b_{i},\,b_{j}\right]b_{j}\\
= & \,b_{i}^{-1}b_{j}^{-1}b_{i}^{-1}b_{j-i}^{2}b_{j}^{-1}\\
= & \,\left[b_{i},\,b_{j}\right]b_{j}^{-1}b_{i}^{-2}b_{j-i}^{2}b_{j}^{-1}\\
= & \,b_{i}^{2}b_{j-i}^{2}b_{j}b_{i}^{-2}b_{j-i}^{2}b_{j}^{-1}\\
= & \,b_{-i}^{2}b_{j-i}^{2}b_{j}b_{-i}^{-2}b_{j-i}^{2}b_{j}^{-1}\\
= & \,1,
\end{align*}

\noindent where the third equality follows from $\left.3_{b,\,-k}\right)$;
the fifth from $\left.3_{b,\,-k}\right)$; the sixth from $\left.4_{b,\,-k}\right)$;
the last follows from $\left.1_{b,\,-k}\right)$ and (4). $\qedhere$

\noindent \end{proof}
\begin{proofclaim2}

From the construction, $NC_{k}$ and $NC_{k+1}$ is isomorphic for
every nonnegative integer $k$, with the isomorphism
\begin{align*}
\gamma_{k} & :NC_{k}\to NC_{k+1},\;\gamma_{k}\left(x\right):=dxd^{-1}.
\end{align*}

We use induction. When $k=0$, it is done for the presentation of
$NC_{0}$. Suppose the presentation of $NC_{k}$ as above. For every
integer $i\ne0,1$, we have
\begin{align*}
d^{-1}b_{i}d & =d^{-1}b_{0}^{-1}d^{-i}b_{0}d^{i+1}=b_{1}^{-1}b_{i+1},
\end{align*}
and for $i=0,1,$
\begin{align*}
 & \,d^{-1}b_{0}d=b_{0}b_{1},\\
 & \,d^{-1}b_{-1}d=d^{-1}b_{0}^{-1}db_{0}=b_{1}^{-1}.
\end{align*}

By applying the isomorphism $\gamma_{k}^{-1}$, it suffices to prove
for $NC_{k}$, the relations $\left.1_{b,\,-k}\right)\text{\textendash}\left.4_{b,\,-k}\right)$
are deduced from
\begin{lyxlist}{00.00.0000}
\item [{$\left.1_{b,\,-k}'\right)$}] \noindent $1=\left(b_{1}^{-1}b_{i}\right)^{4},\;-k\le i,$
\item [{$\left.1_{b}^{-1}\right)$}] \noindent $1=b_{1}^{4},$
\item [{$\left.2_{b,\,-k}'\right)$}] \noindent $\left[b_{0}b_{1},b_{1}^{-1}b_{i}\right]=\left(b_{1}^{-1}b_{i}\right)^{2},\;i\ne0,1,\;-k\le i,$
\item [{$\left.2_{b}^{-1}\right)$}] \noindent $\left[b_{0}b_{1},b_{1}^{-1}\right]=b_{1}^{-2},$
\item [{$\left.3_{b,\,-k}'\right)$}] \noindent $\left[b_{1}^{-1}b_{i},b_{1}^{-1}b_{j}\right]=\left(b_{1}^{-1}b_{i}\right)^{2}\left(b_{1}^{-1}b_{j-i+1}\right)^{2}\left(b_{1}^{-1}b_{j}\right)^{2},\;-k\le i<j,\;i\ne0,1\ne j,$
\item [{$\left.3_{b}^{-1},+\right)$}] \noindent $\left[b_{1}^{-1},b_{1}^{-1}b_{j}\right]=b_{1}^{-2}\left(b_{1}^{-1}b_{j+1}\right)^{2}\left(b_{1}^{-1}b_{j}\right)^{2},\;1<j,$
\item [{$\left.3_{b,\,-k}^{-1},-\right)$}] \noindent $\left[b_{1}^{-1}b_{i},b_{1}^{-1}\right]=\left(b_{1}^{-1}b_{i}\right)^{2}\left(b_{1}^{-1}b_{-i+1}\right)^{2}b_{1}^{-2},\;-k\le i<0,$
\item [{$\left.4_{b,\,-k}'\right)$}] \noindent $\left(b_{1}^{-1}b_{i+2}\right)^{2}=\left(b_{1}^{-1}b_{-i}\right)^{2},\;0<i\le k,$
\item [{$\left.4_{b}^{-1}\right)$}] \noindent $\left(b_{1}^{-1}b_{2}\right)^{2}=b_{1}^{-2},$
\end{lyxlist}
and vice versa.

Suppose $\left.1_{b,\,-k}\right)\text{\textendash}\left.4_{b,\,-k}\right)$.
Then, $\left.1_{b}^{-1}\right)$ follows from $\left.1_{b,\,-k}\right)$;
$\left.2_{b}^{-1}\right)$ from $\left.2_{b,\,-k}\right)$ by using
the commutator identity
\begin{align}
\left[a,\,bc\right] & =\left[a,\,c\right]c^{-1}\left[a,\,b\right]c;
\end{align}
$\left.3_{b}^{-1},+\right)$ from $\left.3_{b,\,-k}\right)$ and (5);
$\left.3_{b,\,-k}^{-1},-\right)$ from $\left.3_{b,\,-k}\right)$
and (5); $\left.4_{b}^{-1}\right)$ from $\left.3_{b,\,-k}\right)$;
$\left.1_{b,\,-k}'\right)$ from $\left.3_{b,\,-k}\right)$. To obatin
$\left.2_{b,\,-k}'\right)$, expanding the commutator in the left
hand side, we compute that
\begin{align*}
 & \,\left[b_{0}b_{1},\,b_{1}^{-1}b_{i}\right]=\\
= & \,\left[b_{0}b_{1},\,b_{i}\right]\left[b_{0}b_{1},\,b_{1}^{-1}\right]\\
= & \,\left[b_{0},\,b_{i}\right]\left[b_{1},\,b_{i}\right]\left[b_{0},\,b_{1}^{-1}\right]\\
= & \,b_{1}^{2}b_{i}^{2}\left[b_{1},\,b_{i}\right]=b_{1}^{2}b_{i}^{2}\left[b_{i},\,b_{1}\right],
\end{align*}
where $\left[b_{i},b_{j}\right]$ is in the center from $\left.2_{b,\,-k}\right)$,
$\left.3_{b,\,-k}\right)$, and Lemma A.1; the fourth equality follows
from Lemma A.1, $\left.1_{b,\,-k}\right)$, and $\left.2_{b,\,-k}\right)$;
and the last from $\left.1_{b,\,-k}\right)$. By comparison, the right
hand side of $\left.2_{b,\,-k}'\right)$ becomes
\begin{align*}
\left(b_{1}^{-1}b_{i}\right)^{2} & =b_{1}b_{i}b_{1}b_{i}=b_{1}^{2}b_{i}\left[b_{i},\,b_{1}\right]b_{i}=b_{1}^{2}b_{i}^{2}\left[b_{i},\,b_{1}\right],
\end{align*}
which establishes $\left.2_{b,\,-k}'\right)$. The proof of $\left.3_{b,\,-k}'\right)$
is similar, by expanding the left hand side by using (5). For $\left.4_{b,\,-k}'\right)$,
we need to show
\begin{align}
b_{i+2}b_{1}b_{i+2} & =b_{-i}b_{1}b_{-i}.
\end{align}

Under the assumption $i\ne0$, we deduce (6) from $\left.3_{b,\,-k}\right)$,
which concludes the deduction from $\left.1_{b,\,-k}\right)\text{\textendash}\left.4_{b,\,-k}\right)$.
Indeed,
\begin{align*}
 & \,b_{i+2}b_{1}b_{i+2}\\
= & \,b_{i+2}^{2}b_{1}\left[b_{1},\,b_{i+2}\right]=b_{1}^{-1}b_{i+1}^{2}\\
= & \,b_{-i}^{2}b_{1}\left[b_{1},\,b_{-i}\right]=b_{-i}b_{1}b_{-i}.
\end{align*}

On the other hand, suppose $\left.1_{b,\,-k}'\right)\text{\textendash}\left.4_{b,\,-k}'\right)$,
$\left.1_{b}^{-1}\right)$, $\left.2_{b}^{-1}\right)$, $\left.3_{b}^{-1},+\right)$,
$\left.3_{b,\,-k}^{-1},-\right)$, $\left.4_{b}^{-1}\right)$. At
first, the relation $\left.1_{b,\,-k}\right)$ for $i=1$ directly
follows from $\left.1_{b}^{-1}\right)$. By applying (5) to $\left.2_{b}^{-1}\right)$
and by using $\left.1_{b}^{-1}\right)$, we have $\left.2_{b,\,-k}\right)$
for $i=1$ from
\begin{align*}
b_{1}^{-2} & =\left[b_{0}b_{1},\,b_{1}^{-1}\right]=b_{1}^{-1}\left[b_{0},\,b_{1}^{-1}\right]b_{1}=\left[b_{1},\,b_{0}\right].
\end{align*}

By applying $\left.1_{b}^{-1}\right)$ and $\left.2_{b,\,-k}\right)$
for $i=1$, we have $\left.1_{b,\,-k}\right)$ for $i=0$.

For $i\ne0,1$, by expanding the left hand side of $\left.2_{b,\,-k}'\right)$
we deduce that
\begin{align*}
 & \,\left(b_{1}^{-1}b_{i}\right)^{2}=\left[b_{0}b_{1},\,b_{1}^{-1}b_{i}\right]\\
= & \,\left[b_{0}b_{1},\,b_{i}\right]b_{i}^{-1}\left[b_{0}b_{1},\,b_{1}^{-1}\right]b_{i}\\
= & \,\left[b_{0}b_{1},\,b_{i}\right]b_{i}^{-1}b_{1}^{-2}b_{i}\\
= & \,b_{1}^{-1}\left[b_{0},\,b_{i}\right]b_{1}\left[b_{1},\,b_{i}\right]b_{i}^{-1}b_{1}^{-2}b_{i},
\end{align*}
where after equating the first term and the last, by reducing the
same words, we deduce that
\begin{align*}
 & \,b_{i}=\left[b_{0},\,b_{i}\right]b_{1}\left[b_{1},\,b_{i}\right]b_{i}^{-1}b_{1}^{-1},\\
\iff & \,b_{i}^{2}b_{1}\left(b_{1}^{-1}b_{i}^{-1}b_{1}b_{i}\right)=\left[b_{0},\,b_{i}\right]b_{1}\left[b_{1},\,b_{i}\right],\\
\iff & \,b_{i}^{2}=\left[b_{0},\,b_{i}\right],
\end{align*}
so finally we have established $\left.2_{b,\,-k}\right)$ for every
$i$.

For convenience, put $c_{i}:=b_{1}^{-1}b_{i}$ for each $i$. We show
$\left.1_{b,\,-k}\right)$ for every $i$. The cases $i=0,1$ are
already done. For $i>1$, we expand
\begin{align*}
 & \,b_{i}^{4}=\left(b_{1}c_{i}b_{1}c_{i}\right)^{2}\\
= & \,\left(c_{i}\left[c_{i},\,b_{1}^{-1}\right]b_{1}^{2}c_{i}\right)^{2}\\
= & \,\left(c_{i}^{-1}c_{i+1}^{-2}b_{1}^{4}c_{i}\right)^{2}\\
= & \,c_{i}^{-1}c_{i+1}^{-4}c_{i}=1,
\end{align*}
where the third equality follows from $\left.3_{b}^{-1},+\right)$,
the fourth from $\left.1_{b}^{-1}\right)$, and the last from $\left.1_{b,\,-k}'\right)$.
For $i$ such that $-k\le i<0$, we expand
\begin{align*}
 & \,b_{i}^{4}=\left(c_{i}\left[c_{i},\,b_{1}^{-1}\right]b_{1}^{2}c_{i}\right)^{2}\\
= & \,\left(c_{i}^{3}c_{1-i}^{2}c_{i}\right)^{2}\\
= & \,c_{i}^{3}c_{1-i}^{2}c_{i}^{4}c_{1-i}^{2}c_{i}=1,
\end{align*}
where the second eqality follows from $\left.3_{b,\,-k}^{-1},-\right)$,
and the last from $\left.1_{b,\,-k}'\right)$.

To deal with $\left.3_{b,\,-k}\right)$ and $\left.4_{b,\,-k}\right)$,
we introduce several claims.

\noindent \begin{claim3}

Under the assumptions $\left.1_{b,\,-k}'\right)\text{\textendash}\left.4_{b,\,-k}'\right)$,
$\left.1_{b}^{-1}\right)$, $\left.2_{b}^{-1}\right)$, $\left.3_{b}^{-1},+\right)$,
$\left.3_{b,\,-k}^{-1},-\right)$, $\left.4_{b}^{-1}\right)$, we
have
\begin{align}
 & \,c_{j}^{2}\in Z\left(\left\langle c_{0},\,c_{2},\,c_{3},\cdots\right\rangle \right),\;j\ge0.\\
 & \,\left[b_{1}^{2},\,c_{j}\right]=1,\;j\ge0,\\
 & \,\left[b_{1},\,c_{j}^{2}\right]=1,\;j\ge0,
\end{align}
where $Z\left(A\right)$ is the center of the group $A$.

\noindent \end{claim3}
\begin{proofclaim3}

We first show (7) by induction. When $j=0$, for any integer $i$
such that $1<i$, the relation $\left.2_{b,\,-k}'\right)$ $\left[c_{0}^{-1},c_{i}\right]=c_{i}^{2}$
implies
\begin{align*}
 & \,\left[c_{0}^{2},\,c_{i}\right]=c_{0}^{-2}c_{i}^{-1}c_{0}^{2}c_{i}\\
= & \,c_{0}^{-1}\left[c_{0},\,c_{i}\right]c_{i}^{-1}c_{0}c_{i}\\
= & \,c_{0}^{-1}c_{i}c_{0}c_{i}=c_{i}\left[c_{i},\,c_{0}\right]c_{i}=1.
\end{align*}

For any pair $\left(i,j\right)$ such that $1<i<j$, the relation
$\left.3_{b,\,-k}'\right)$ means $\left[c_{i},c_{j}\right]=c_{i}^{2}c_{j-i+1}^{2}c_{j}^{2}$.
This implies $\left[c_{2},c_{3}\right]=c_{3}^{2}$ or $c_{2}^{-1}c_{3}^{-1}c_{2}=c_{3}$.
Therefore, we have $\left[c_{2},c_{3}^{2}\right]=1=\left[c_{2}^{2},c_{3}\right]$.
For some integer $4\le N$, suppose $\left[c_{i}^{2},c_{j}\right]=1$
for every pair $\left(i,j\right)$ such that $1<i,j<N$. By expanding
$\left.3_{b,\,-k}'\right)$ we have
\begin{align*}
 & \,\left[c_{i},\,c_{N}\right]=c_{i}^{2}c_{N-i+1}^{2}c_{N}^{2}\\
\iff & \,\left(c_{i}c_{N}^{-1}\right)^{2}=c_{N-i+1}^{2},
\end{align*}
which implies
\begin{align*}
 & \,1=\left[c_{N-i+1}^{2},\,c_{i}c_{N}^{-1}\right]\\
\iff & \,1=\left[c_{N-i+1}^{2},\,c_{N}^{-1}\right]c_{N}\left[c_{N-i+1}^{2},\,c_{i}\right]c_{N}^{-1}\\
\iff & \,1=\left[c_{N-i+1}^{2},\,c_{N}^{-1}\right],
\end{align*}
where the first equivalence follows from (5) and the second from the
induction hypothesis. From this calculation, we have established
\begin{align}
1 & =\left[c_{i}^{2},\,c_{N}\right],\;1<i<N.
\end{align}

On the other hand, we also have
\begin{align*}
 & \,1=\left[c_{N}^{2},\,c_{i}\right]\\
\iff & \,1=c_{N}^{-2}c_{i}^{-1}c_{N}^{2}c_{i}\\
\iff & \,1=c_{N}^{-1}\left[c_{N},\,c_{i}\right]c_{i}^{-1}c_{N}c_{i}\\
\iff & \,1=c_{N}c_{N-i+1}^{2}c_{i}c_{N}c_{i}
\end{align*}
\begin{align*}
\iff & \,c_{i}c_{N}c_{i}c_{N}=c_{N-i+1}^{-2}\\
\iff & \,c_{i}^{-1}c_{N}c_{i}^{-1}c_{N}=c_{N-i+1}^{-2}\\
\iff & \,\left[c_{i},\,c_{N}\right]=c_{i}^{2}c_{N-i+1}^{2}c_{N}^{2},
\end{align*}
where the third equivalence follows from $\left.3_{b,\,-k}'\right)$;
the fourth from (10); and the fifth from (10) and $\left.1_{b,\,-k}'\right)$.
It concludes the proof of (7).

The equation (8) is direct from $\left.4_{b}^{-1}\right)$ $c_{2}^{2}=b_{1}^{-2}$
and (7). For (9), for each $i>1$, we expand
\begin{align*}
 & \,\left[b_{1},\,c_{j}^{2}\right]\\
= & \,\left[b_{1},\,c_{j}\right]c_{j}^{-1}\left[b_{1},\,c_{j}\right]c_{j}\\
= & \,b_{1}^{-2}c_{j+1}^{2}c_{j}^{2}c_{j}^{-1}b_{1}^{-2}c_{j+1}^{2}c_{j}^{3},\\
= & \,b_{1}^{-4}c_{j+1}^{4}c_{j}^{4}=1,
\end{align*}
where the first equality follows from (5); the second from $\left.3_{b}^{-1},+\right)$;
the third from (7) and (8); and the last from $\left.1_{b,\,-k}'\right)$
and $\left.1_{b}^{-1}\right)$. $\qed$ \end{proofclaim3}
\begin{claim4}

Under the assumptions $\left.1_{b,\,-k}'\right)\text{\textendash}\left.4_{b,\,-k}'\right)$,
$\left.1_{b}^{-1}\right)$, $\left.2_{b}^{-1}\right)$, $\left.3_{b}^{-1},+\right)$,
$\left.3_{b,\,-k}^{-1},-\right)$, $\left.4_{b}^{-1}\right)$, we
have
\begin{align}
b_{i}^{2} & \in Z\left(\left\langle b_{0},\,b_{1},\,b_{2},\cdots\right\rangle \right),\;i\ge0.
\end{align}

\noindent \end{claim4}
\begin{proofclaim4}

At first, the cases $i=0,1$ directly follow from $\left.2_{b,\,-k}\right)$
and (8). The equation $\left[b_{0},b_{j}^{2}\right]=1$ for $j>1$
also follows from $\left.1_{b,\,-k}\right)$ and $\left.2_{b,\,-k}\right)$.
For any integer $j>1$, observe that
\begin{align*}
 & \,\left[b_{1},\,b_{j}^{2}\right]\\
= & \,\left[b_{1},\,b_{1}c_{j}b_{1}c_{j}\right]\\
= & \,\left[b_{1},\,b_{1}^{2}c_{j}^{2}\left[c_{j},\,b_{1}\right]\right]\\
= & \,1,
\end{align*}
where the last equality follows from (8), (9), and $\left.3_{b}^{-1},+\right)$.
On the other hand, for integers $i,j\ge1$ we deduce that
\begin{align*}
 & \,\left[b_{i}^{2},\,b_{j}\right]\\
= & \,\left[b_{1}c_{i}b_{1}c_{i},\,b_{1}c_{j}\right]\\
= & \,\left[b_{1}^{2}c_{i}^{2}\left[b_{1},\,c_{i}\right],\,b_{1}c_{j}\right]\\
= & \,\left[b_{1}^{2}c_{i}^{2}\left[b_{1},\,c_{i}\right],\,c_{j}\right]c_{j}^{-1}\left[b_{1}^{2}c_{i}^{2}\left[b_{1},\,c_{i}\right],\,b_{1}\right]c_{j}\\
= & \,1,
\end{align*}
where the last equality follows from (8), (9), and $\left.3_{b}^{-1},+\right)$.
$\qed$

\noindent \end{proofclaim4}
\begin{claim5}

Under the assumptions $\left.1_{b,\,-k}'\right)\text{\textendash}\left.4_{b,\,-k}'\right)$,
$\left.1_{b}^{-1}\right)$, $\left.2_{b}^{-1}\right)$, $\left.3_{b}^{-1},+\right)$,
$\left.3_{b,\,-k}^{-1},-\right)$, $\left.4_{b}^{-1}\right)$, when
$1<j$, we have
\begin{align}
 & \,\left[b_{1},\,b_{j}\right]\,=b_{1}^{2}b_{j-1}^{2}b_{j}^{2},\\
 & \,c_{j}^{2}\,=b_{j-1}^{2}.
\end{align}
\end{claim5}
\begin{proofclaim5}

We use induction. The case $j=2$ follows from $\left.4_{b}^{-1}\right)$.
Suppose (12) holds for $1<j\le N$. By the induction hypothesis, we
have
\begin{align*}
 & \,b_{1}^{2}b_{N-1}^{2}b_{N}^{2}\\
= & \,\left[b_{1},\,b_{N}\right]=\left[b_{1}^{-1},\,c_{N}\right]\\
= & \,b_{1}^{-2}\left(b_{1}^{-1}b_{N+1}\right)^{2}\left(b_{1}^{-1}b_{N}\right)^{2}\\
= & \,b_{1}\left(b_{N+1}b_{1}^{-1}b_{N+1}b_{1}^{-1}\right)\left(b_{N}b_{1}^{-1}b_{N}\right),
\end{align*}
where the third equality follows from $\left.3_{b}^{-1},+\right)$.
By equating the first term with the last term, we establish (12) as
follows:
\begin{align*}
 & \,b_{1}b_{N-1}^{2}b_{N}b_{1}b_{N}^{-1}=b_{N+1}b_{1}^{-1}b_{N+1}b_{1}^{-1}\\
\iff & \,b_{1}b_{N-1}^{2}b_{N}^{2}b_{1}\left[b_{1},\,b_{N}\right]b_{N}^{2}=b_{N+1}b_{1}^{-1}b_{N+1}b_{1}^{-1}\\
\iff & \,b_{N}^{2}=b_{N+1}b_{1}^{-1}b_{N+1}b_{1}^{-1}\\
\iff & \,b_{N+1}^{2}b_{N}^{2}b_{1}^{2}=\left[b_{N+1},\,b_{1}\right],
\end{align*}
where the last equivalence follows from the induction hypothesis and
(9).

The equation (13) directly follows from (12). Indeed,
\begin{align*}
c_{i}^{2} & =\left(b_{1}^{-1}b_{i}\right)^{2}=b_{1}b_{i}b_{1}b_{i}=b_{1}b_{i}^{2}b_{1}\left[b_{1},\,b_{i}\right]=b_{i-1}^{2}.\;\qed
\end{align*}
\end{proofclaim5}
\begin{claim6}

Under the assumptions $\left.1_{b,\,-k}'\right)\text{\textendash}\left.4_{b,\,-k}'\right)$,
$\left.1_{b}^{-1}\right)$, $\left.2_{b}^{-1}\right)$, $\left.3_{b}^{-1},+\right)$,
$\left.3_{b,\,-k}^{-1},-\right)$, $\left.4_{b}^{-1}\right)$, when
$-k\le i<0$, we have
\begin{align}
 & \,c_{i}^{2}\,=b_{1-i}^{2}.\\
 & \,\left[b_{i},\,b_{1}^{-1}\right]\,=b_{1-i}^{2}b_{-i}^{2}b_{1}^{2}.
\end{align}
\end{claim6}
\begin{proofclaim6}

The equation (14) directly follows from $\left.4_{b,\,-k}'\right)$
and (13). By expanding the left hand side of (15), we have
\begin{align*}
 & \,\left[b_{i},\,b_{1}^{-1}\right]=\left[c_{i},\,b_{1}^{-1}\right]\\
= & \,\left(b_{1}^{-1}b_{i}\right)^{2}\left(b_{1}^{-1}b_{1-i}\right)^{2}b_{1}^{-2}\\
= & \,b_{1-i}^{2}b_{-i}^{2}b_{1}^{2},
\end{align*}
where the second equality follows from $\left.3_{b,\,-k}^{-1},-\right)$,
and the last from (14). $\qed$

\noindent \end{proofclaim6}

We return to the proof of Claim 2. To show $\left.4_{b,\,-k}\right)$,
for any integer $i$ such that $0<i\le k$, by expanding $\left.4_{b,\,-k}'\right)$
we deduce that
\begin{align*}
 & \,\left(b_{1}^{-1}b_{i+2}\right)^{2}=\left(b_{1}^{-1}b_{-i}\right)^{2}\\
\iff & \,b_{i+1}^{2}=b_{1}^{-1}b_{-i}b_{1}^{-1}b_{-i}\\
\iff & \,b_{1}b_{i+1}^{2}=b_{-i}b_{1}^{-1}b_{-i}\\
\iff & \,b_{1}b_{i+1}^{2}=b_{-i}^{2}b_{1}^{-1}\left[b_{1}^{-1},\,b_{-i}\right]\\
\iff & \,b_{1}b_{i+1}^{2}=b_{-i}^{2}b_{i+1}^{2}b_{i}^{2}b_{1}\\
\iff & \,b_{1}b_{i+1}^{2}b_{1}^{-1}b_{i}^{-2}b_{i+1}^{-2}=b_{-i}^{2}\\
\iff & \,b_{i}^{2}=b_{-i}^{2},
\end{align*}
where the first equivalence follows from (13); the fourth from (11),
(15), and $\left.1_{b,\,-k}\right)$; and the last from (11) and $\left.1_{b,\,-k}\right)$.

Finally, we show $\left.3_{b,\,-k}\right)$. The case $1=i<j$ is
already handled by (12). The case $i<j=1$ also directly follows from
(15) and $\left.4_{b,\,-k}\right)$. Therefore, we may assume $i\ne1\ne j$.
For any pair $\left(i,j\right)$ such that $1<i<j$, by expanding
the left hand side of $\left.3_{b,\,-k}\right)$, we have
\begin{align*}
 & \,\left[b_{i},\,b_{j}\right]=\left[b_{1}c_{i},\,b_{1}c_{j}\right]\\
= & \,\left[b_{1}c_{i},\,c_{j}\right]c_{j}^{-1}\left[b_{1}c_{i},\,b_{1}\right]c_{j}\\
= & \,c_{i}^{-1}\left[b_{1},\,c_{j}\right]c_{i}\left[c_{i},\,c_{j}\right]c_{j}^{-1}\left[c_{i},\,b_{1}\right]c_{j}\\
= & \,c_{i}^{-1}\left[b_{1},\,b_{j}\right]c_{i}^{3}c_{j-i+1}^{2}c_{j}\left[b_{i},\,b_{1}\right]c_{j}\\
= & \,c_{i}^{2}\left(b_{1}^{2}b_{j-1}^{2}b_{j}^{2}\right)\left(b_{1}^{2}b_{i-1}^{2}b_{i}^{2}\right)c_{j}^{2}c_{j-i+1}^{2}\\
= & \,b_{i}^{2}b_{j-i}^{2}b_{j}^{2},
\end{align*}
where the fourth equality follows from $\left.3_{b,\,-k}'\right)$,
the fifth from (12), and the last from (13), (9), and $\left.1_{b,\,-k}\right)$.
For the cases $i<0$, we need to introduce more claims.

\noindent \begin{claim7}

Under the assumptions $\left.1_{b,\,-k}'\right)\text{\textendash}\left.4_{b,\,-k}'\right)$,
$\left.1_{b}^{-1}\right)$, $\left.2_{b}^{-1}\right)$, $\left.3_{b}^{-1},+\right)$,
$\left.3_{b,\,-k}^{-1},-\right)$, $\left.4_{b}^{-1}\right)$, for
any pair of integers $\left(i,j\right)$ such that $i<0$ and $j>0$,
$\left[b_{i},b_{j}\right]$ is included in the abelian subgroup $\left\langle b_{0}^{2},b_{1}^{2},\cdots\right\rangle .$

\noindent \end{claim7}
\begin{proofclaim7}

As in the partial proof of $\left.3_{b,\,-k}\right)$, we compute
that
\begin{align*}
 & \,\left[b_{i},\,b_{j}\right]\\
= & \,c_{i}^{-1}\left[b_{1},\,b_{j}\right]c_{i}^{3}c_{j-i+1}^{2}c_{j}\left[b_{i},\,b_{1}\right]c_{j}\\
= & \,c_{i}^{-1}\left(b_{1}^{2}b_{j-1}^{2}b_{j}^{2}\right)c_{i}^{3}b_{j-i}^{2}c_{j}b_{1-i}^{2}b_{-i}^{2}b_{1}^{2}c_{j}\\
= & \,c_{i}^{-1}\left(b_{1}^{2}b_{j-1}^{2}b_{j}^{2}\right)c_{i}^{3}b_{j-i}^{2}b_{1-i}^{2}b_{-i}^{2}b_{1}^{2}b_{j-1}^{2},
\end{align*}
where the second equality follows from (12) and (15); the last from
(11) and (13).

Therefore, it suffices to prove for each $\left(i,j\right)$ such
that $i<0$ and $j>0$, $c_{i}^{-1}b_{j}^{2}c_{i}$ is included in
the abelian subgroup $\left\langle b_{0}^{2},b_{1}^{2},\cdots\right\rangle .$
By expanding it,
\begin{align*}
 & \,c_{i}^{-1}b_{j}^{2}c_{i}=b_{j}^{2}\left[b_{j}^{2},\,c_{i}\right]\\
= & \,b_{j}^{2}\left[b_{1}c_{j}b_{1}c_{j},\,c_{i}\right]\\
= & \,b_{j}^{2}c_{j}^{-1}\left[b_{1}c_{j}b_{1},\,c_{i}\right]c_{j}\left[c_{j},\,c_{i}\right]\\
= & \,b_{j}^{2}c_{j}^{-1}b_{1}^{-1}\left[b_{1}c_{j},\,c_{i}\right]b_{1}\left[b_{1},\,c_{i}\right]c_{j}\left[c_{j},\;c_{i}\right]\\
= & \,b_{j}^{2}c_{j}^{-1}b_{1}^{-1}c_{j}^{-1}\left[b_{1},\;c_{i}\right]c_{j}\left[c_{j},\;c_{i}\right]b_{1}\left[b_{1},\;c_{i}\right]c_{j}\left[c_{j},\;c_{i}\right],
\end{align*}
where in the last term, every commutators are in the $\left\langle b_{0}^{2},b_{1}^{2},\cdots\right\rangle $
from $\left.1_{b}^{-1}\right)$, $\left.3_{b,\,-k}'\right)$, $\left.3_{b,\,-k}^{-1},-\right)$,
(13), and (14). By rearranging the last term, we have
\begin{align*}
c_{i}^{-1}b_{j}^{2}c_{i} & =b_{j}^{2}.\;\qed
\end{align*}
\end{proofclaim7}
\begin{claim8}

Under the assumptions $\left.1_{b,\,-k}'\right)\text{\textendash}\left.4_{b,\,-k}'\right)$,
$\left.1_{b}^{-1}\right)$, $\left.2_{b}^{-1}\right)$, $\left.3_{b}^{-1},+\right)$,
$\left.3_{b,\,-k}^{-1},-\right)$, $\left.4_{b}^{-1}\right)$, for
any pair of integers $\left(i,j\right)$ such that $i<0$ and $j\ge0$,
we have
\begin{align}
 & \,\left[b_{i},\,b_{j}^{2}\right]=1,
\end{align}
\end{claim8}
\begin{proofclaim8}

It is direct from Claim 7. Indeed,
\begin{align*}
b_{i}b_{j}^{2} & =b_{j}b_{i}\left[b_{i},\,b_{j}\right]b_{j}=b_{j}b_{i}b_{j}\left[b_{i},\,b_{j}\right]=b_{j}^{2}b_{i}\left[b_{i},\,b_{j}\right]^{2}=b_{j}^{2}b_{i},
\end{align*}
\end{proofclaim8}

We again return to the proof of $\left.3_{b,\,-k}\right)$. Suppose
$i<0$ and $1<j$. We expand
\begin{align*}
 & \,\left[b_{i},\,b_{j}\right]\\
= & \,c_{i}^{-1}\left(b_{1}^{2}b_{j-1}^{2}b_{j}^{2}\right)c_{i}^{3}b_{j-i}^{2}b_{1-i}^{2}b_{-i}^{2}b_{1}^{2}b_{j-1}^{2}\\
= & \,\left(b_{1}^{2}b_{j-1}^{2}b_{j}^{2}\right)b_{1-i}^{2}b_{j-i}^{2}b_{1-i}^{2}b_{-i}^{2}b_{1}^{2}b_{j-1}^{2}\\
= & \,b_{i}^{2}b_{j-i}^{2}b_{j}^{2},
\end{align*}
where the first equality follows from the proof of Claim 7; the second
from (11), (14), and (16); and the last from (11), $\left.1_{b,\,-k}\right)$,
and $\left.4_{b,\,-k}\right)$.

On the other hand, suppose $i<j<0$. We expand
\begin{align*}
 & \,\left[b_{i},\,b_{j}\right]\\
= & \,c_{i}^{-1}\left[b_{1},\,b_{j}\right]c_{i}^{3}c_{j-i+1}^{2}c_{j}\left[b_{i},\,b_{1}\right]c_{j}\\
= & \,c_{i}^{-1}b_{1-j}^{2}b_{-j}^{2}b_{1}^{2}c_{i}^{3}b_{j-i}^{2}c_{j}b_{1-i}^{2}b_{-i}^{2}b_{1}^{2}c_{j}\\
= & \,b_{1-j}^{2}b_{-j}^{2}b_{1}^{2}c_{i}^{2}b_{j-i}^{2}b_{1-i}^{2}b_{-i}^{2}b_{1}^{2}c_{j}^{2}\\
= & \,b_{1-j}^{2}b_{-j}^{2}b_{1}^{2}b_{1-i}^{2}b_{j-i}^{2}b_{1-i}^{2}b_{-i}^{2}b_{1}^{2}b_{1-j}^{2}\\
= & \,b_{i}^{2}b_{j-i}^{2}b_{j}^{2},
\end{align*}
where the second equality from (15); the third from (16); the fourth
from (14); and the last from (11), $\left.1_{b,\,-k}\right)$, and
$\left.4_{b,\,-k}\right)$. We have established $\left.3_{b,\,-k}\right)$,
and finished the proof of Claim 2. $\qed$

\noindent \end{proofclaim2}

\noindent \end{appendices}

\begin{spacing}{0.9}
\bibliographystyle{amsplain}
\phantomsection\addcontentsline{toc}{section}{\refname}\bibliography{bibgen}

\end{spacing}

$ $

{\small{}Donsung Lee; \href{mailto:disturin@snu.ac.kr}{disturin@snu.ac.kr}}{\small\par}

{\small{}Department of Mathematical Sciences and Research Institute
of Mathematics,}{\small\par}

{\small{}Seoul National University, Gwanak-ro 1, Gwankak-gu, Seoul,
South Korea 08826}{\small\par}

\clearpage{}

\pagebreak{}

\pagenumbering{arabic}

\renewcommand{\thefootnote}{A\arabic{footnote}}
\renewcommand{\thepage}{A\arabic{page}}
\renewcommand{\thetable}{A\arabic{table}}
\renewcommand{\thefigure}{A\arabic{figure}}

\setcounter{footnote}{0} 
\setcounter{section}{0}
\setcounter{table}{0}
\setcounter{figure}{0}
\end{document}